\documentclass[12pt]{amsart}

\usepackage[expansion=false]{microtype}
\usepackage{amsmath,amssymb,amsthm,mathtools,booktabs,enumitem,array,float}
\usepackage[colorlinks=true,allcolors=black]{hyperref}
\newtheorem{theorem}{Theorem}[section]
\newtheorem{lemma}[theorem]{Lemma}
\newtheorem{proposition}[theorem]{Proposition}
\newtheorem{corollary}[theorem]{Corollary}
\theoremstyle{definition}
\theoremstyle{remark}\newtheorem{remark}[theorem]{Remark}
\newcommand{\Z}{\mathbb Z}\newcommand{\R}{\mathbb R}\newcommand{\T}{\mathbb T}
\newcommand{\norm}[1]{\lVert #1\rVert}
\newcommand{\ML}{\operatorname{ML}}
\newcommand{\covol}{\operatorname{covol}}
\newcommand{\supn}[1]{\lvert #1\rvert_\infty}
\newcommand{\eucl}[1]{\lvert #1\rvert}
\newcommand{\Uone}{U^{1}}\newcommand{\Utwo}{U^{2}}
\newcommand{\Wone}{W^{1}}\newcommand{\Wtwo}{W^{2}}

\title{Four runners that are barely lonely}
\author[H. Yi et al.]{Hyunjun Yi, Karam Gill, Bryan Bu}
\address[Hyunjun Yi]{Seoul International School, 15 Seongnam-daero 1518beon-gil, Sujeong-gu, Seongnam-si, Gyeonggi-do 13113, Republic of Korea}
\email{hjyi2027@gmail.com}
\address[Karam Gill]{Stanford Online High School, 415 Broadway, Academy Hall, Floor 2, 8853, Redwood City, CA 94063, USA}
\email{kgill12@ohs.stanford.edu}
\address[Bryan Bu]{University of Massachusetts Amherst, 141 Commonwealth Avenue, Amherst, MA 01003, USA}
\email{bryanbu@umass.edu}
\subjclass[2020]{Primary 11J71; Secondary 11H06, 52C07}
\keywords{Lonely Runner Conjecture, loneliness spectrum, subtori, geometry of numbers}
\begin{document}
\begin{abstract}
For positive integers $v_1,\dots,v_n$ let $\ML(v_1,\dots,v_n)=\max_t\min_i\norm{v_it}$, where $\norm{x}$ is the distance to the nearest integer. Kravitz asked which values below $1/n$ occur. For $n=4$, Jain and Kravitz showed that the values below $1/4$ are $m/(4m+2)$, $m\ge2$, up to finitely many undetermined exceptions. We settle the four-runner case completely. A primitive quadruple has $\ML<1/4$ if and only if it is, up to order, $\{1,2,3,4k\}$, or $\{x,y,x+y,x+2y\}$ with $\gcd(x,y)=1$ and $y\not\equiv x,2x\pmod 4$, or $\{1,3,4,14\}$. The values below $1/4$ are exactly $m/(4m+2)$ with $m\ge2$, $m\ne3$, so $3/14$ is the only exception, and Conjecture~4.1 of Fan and Sun holds. A density lemma bounds the length of a quadruple on neither critical subtorus by $512$, and an exact search over $800{,}816{,}099$ quadruples completes the proof. The same method shows that the two-dimensional subtori of $(\R/\Z)^4$ with loneliness below $2/5$ take exactly the three-runner values in that range, which between $1/3$ and $2/5$ are the fractions $p/q$ with $1\le 2q-5p\le 4$. Hence the accumulation points of the four-runner spectrum in $(0,2/5]$ are the three-runner values there, as Giri and Kravitz conjectured, and $9/25$ is not the loneliness of a two-dimensional subtorus. For every $n$, assuming the conjecture for $n-1$ and $n-2$ speeds, we prove $\ML(v)>1/n-(n(n-1))^{n-2}/(2|v|)$, which is sharp in $|v|$, and that every $n$-tuple with $\ML<1/n$ and $|v|\ge n(n(n-1))^{2n-4}$ lies on a two-dimensional subtorus of loneliness $1/n$. This answers a question of Cordella. For five speeds, a two-dimensional subtorus of $(\R/\Z)^5$ with loneliness below $2/9$ is the product of a coordinate circle with the circle of $\{1,2,3,4\}$ or $\{1,3,4,7\}$. We also give infinitely many subtori whose maximizers lie off every circle with a repeated speed.
\end{abstract}
\maketitle

\section{Introduction}
Let $\norm{x}=\min_{k\in\Z}|x-k|$. For a finite set $V$ of positive integers (the speeds) put
\[
 \ML(V)=\max_{t\in\R}\ \min_{v\in V}\norm{vt}.
\]
The Lonely Runner Conjecture of Wills \cite{Wills} and Cusick \cite{Cusick} predicts $\ML(V)\ge 1/(|V|+1)$. It is known for $|V|\le 6$ \cite{CusickPomerance,Chen,Bienia,BHK,BarajasSerra}, Rosenfeld proved it for seven speeds by a computer assisted argument \cite{Rosenfeld}, and further computer assisted proofs for up to fourteen speeds have been announced recently \cite{Trak,SungTrak,Allikvere}. The survey \cite{PerarnauSerra} describes the history and the many equivalent formulations.

Kravitz \cite{Kravitz} proposed to study the whole \emph{spectrum} of values of $\ML$ for a fixed number $n=|V|$ of speeds. The example $V=\{1,\dots,n\}$ shows that the value $1/(n+1)$ is attained, and Kravitz asked whether the values between $1/(n+1)$ and $1/n$ form a rigid set. He conjectured that they are exactly the numbers $s/(ns+1)$, $s\in\mathbb N$, and proved this for $n\le 3$. Fan and Sun \cite{FanSun} showed that for $n=4$ the conjecture fails: $\ML(8,4s+3,4s+11,4s+19)=(2s+7)/(8s+30)$ for every $s\ge 0$. They proposed an amended conjecture, in which $s/(ns+k)$ is allowed for $1\le k\le n$, and they conjectured (their Conjecture~4.1) that for $n=4$ only $k\in\{1,2\}$ occurs. Jain and Kravitz \cite{JainKravitz} then developed a theory of \emph{relative spectra} and proved \cite[Theorem~1.3]{JainKravitz} that the set of values of $\ML$ below $1/4$ for four speeds has finite symmetric difference with
\begin{equation}\label{eq:JK}
 \Bigl\{\tfrac14-\tfrac{1}{8s+12}:s\ge 1\Bigr\}=\Bigl\{\tfrac{m}{4m+2}:m\ge 2\Bigr\}.
\end{equation}
(They state the result for $D=\frac12-\ML$ on the interval $(\frac14,\frac12]$, where the set reads $\frac14+1/\operatorname{Prog}(8,12)$ with $\operatorname{Prog}(8,12)=\{8s+12:s\ge0\}$. The term $s=0$ is $D=\frac13$, that is $\ML=\frac16$, which no quadruple attains because $\ML\ge\frac15$ for four speeds, and we omit it.) They write that determining this finite symmetric difference ``is itself a finite calculation that we have not attempted to carry out'', and that numerical experiments suggest that there are no exceptional elements. The value $\frac3{14}$, which corresponds to $s=2$, turns out to be absent.

Cordella \cite{Cordella} carried the relative spectrum analysis to five and six speeds, again up to finitely many exceptions. The reduction behind all of these results, \cite[Lemma~2.4]{Cordella}, says that all but finitely many near-tight tuples lie on a two-dimensional ``critical'' subtorus, and it is not effective: no bound on the exceptional tuples is produced. Cordella asks \cite[Question~6.7]{Cordella} for an effective version, or for a direct argument that bounds the speeds of a near-tight tuple lying on no critical subtorus. For four speeds he searched all quadruples with speeds at most $200$ \cite[Section~6.3]{Cordella} and found exactly one near-tight quadruple on neither critical subtorus, namely $(1,3,4,14)$ with $\ML=4/17$. The purpose of this paper is to give the effective bound for four speeds, to carry out the Jain--Kravitz calculation completely, and to classify the speed sets. There is exactly one exceptional value and exactly one sporadic quadruple.

Call a quadruple $V=\{v_1,v_2,v_3,v_4\}$ of distinct positive integers \emph{primitive} if $\gcd(v_1,v_2,v_3,v_4)=1$. Since $\ML(cV)=\ML(V)$, nothing is lost by this normalization. For integers $x,y\ge 1$ write
\[
 V(x,y)=\{x,\ y,\ x+y,\ x+2y\}.
\]

\begin{theorem}[Classification]\label{thm:main}
Let $V$ be a primitive quadruple of distinct positive integers. Then $\ML(V)<1/4$ if and only if one of the following holds.
\begin{enumerate}[label=\textup{(\alph*)},leftmargin=*]
\item $V=\{1,2,3,4k\}$ for some $k\ge 1$, and then $\ML(V)=k/(4k+1)$.
\item $V=V(x,y)$ with $\gcd(x,y)=1$, $x\ne y$, $y\not\equiv x\pmod 4$ and $y\not\equiv 2x\pmod 4$, and then $\ML(V)=m/(4m+2)$ with $m=m(x,y)$ given by \eqref{eq:m} below.
\item $V=\{1,3,4,14\}$, and then $\ML(V)=4/17$.
\end{enumerate}
\end{theorem}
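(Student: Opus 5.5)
The plan has three parts: verify the listed families directly, bound any quadruple off the critical subtori, and finish by exact computation.

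\emph{Step 1: the listed quadruples satisfy the formulas.} For (a) I would compute $\ML(\{1,2,3,4k\})$ directly. The speeds $1,2,3$ force $t$ into a union of short arcs around $\pm1/4$, on which $\min(\norm t,\norm{2t},\norm{3t})\le 1/4$. The fourth runner, with speed $4k$, then pushes the optimum down to $k/(4k+1)$, at the point $t=k/(4k+1)$ and its symmetric images. For (b) the set $V(x,y)$ lies on the two-dimensional subtorus $\{(a,b,a+b,a+2b)\}$. I would write $t x\equiv a$ and $t y\equiv b$ and study the set where $\min(\norm a,\norm b,\norm{a+b},\norm{a+2b})\ge 1/4-\varepsilon$. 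For small $\varepsilon$ this set is a union of small polygons around the points where the subtorus has loneliness exactly $1/4$. The line $t\mapsto(tx,ty)$ meets these polygons or misses them according to $x,y \bmod 4$. When $y\equiv x$ or $y\equiv 2x \pmod 4$, the line passes through a point achieving $1/4$. Otherwise one optimizes the linear functionals along the line and obtains $m/(4m+2)$, with $m$ given by \eqref{eq:m}. For (c) a direct finite evaluation over the rationals $j/(2v)$, $j/(v\pm v')$ gives $4/17$, since maximizers of $\min_i\norm{v_it}$ occur at such breakpoints.

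\emph{Step 2: the converse, via a density argument.} Suppose $\ML(V)<1/4$. The key structural input is the density lemma announced in the abstract. Consider the orbit $\{t(v_1,\dots,v_4)\}$ in $(\R/\Z)^4$, and let $U=\{u:\min_i\norm{u_i}>1/4-\delta\}$, which is a union of boxes. If the orbit avoids $U$, then by a Minkowski/Kronecker type argument the closure of the orbit is the whole torus modulo a lattice of small covolume. Equivalently, there is a short integer relation $c\cdot V=0$ whose height is controlled, with an explicit constant. Iterating, one finds two independent short relations, so $V$ lies on a two-dimensional subtorus of loneliness at most $1/4$. I would then classify such subtori up to permutation and sign. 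The two-dimensional subtori with loneliness below or equal to $1/4$ should be exactly the two critical ones: $\{(a,2a,3a,b)\}$, which gives (a) after primitivity and $\ML<1/4$ force $b=4ka$, and $\{(a,b,a+b,a+2b)\}$, which gives (b). Quadruples lying on neither critical subtorus must then have length $|V|\le 512$.

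\emph{Step 3: finite search.} All primitive quadruples with $|V|\le512$ not on the critical subtori are checked by exact rational computation of $\ML$, using the breakpoint characterization. The only survivor should be $\{1,3,4,14\}$.

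The main obstacle is making the density lemma effective with a constant as small as $512$. I would first try a covering argument. Take the union of the boxes of side about $2\delta$ around the points of loneliness $1/4$ that are not on the critical subtori. If a segment of the orbit of length greater than $1$ avoids them, a volume-versus-covolume count (Blichfeldt) forces a dual vector of norm at most $C$. Choosing $\delta$ as a function of how far $\ML$ is below $1/4$ then balances the two estimates.
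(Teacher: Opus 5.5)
Your outline has the same architecture as the paper: a density lemma, reduction to $\Uone$ and $\Utwo$, a finite search below $512$, and a superlevel-set analysis of the two families. Steps~1 and~3 are essentially the paper's. Step~2, where the real work lies, has a genuine gap.

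Geometry of numbers does not put $T_v$ on a two-dimensional subtorus of loneliness \emph{at most} $\frac14$. Applied to $\pi(\Z^4)$, a lattice of covolume $1/\eucl v$ in $v^\perp$, Minkowski only makes $T_v$ a $\frac12\eucl v^{-1/3}$-dense subset of some two-dimensional $U\supseteq T_v$. All you learn is $\ML(U)<\frac14+\frac12\eucl v^{-1/3}$. Iterating to get two short relations gives no further control on $\ML(U)$.

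Every coordinate-free two-dimensional subtorus has $\ML\ge\frac14$. So concluding $U\in\{\Uone,\Utwo\}$ requires two things:
\begin{itemize}
\item a list of \emph{all} two-dimensional subtori with loneliness slightly above $\frac14$, with covolume control;
\item a proof that long circles on the non-critical ones cannot fall below $\frac14$.
\end{itemize}
Your sentence that the subtori with $\ML\le\frac14$ ``should be exactly the two critical ones'' is unproved, and it is not the statement you need. The Blichfeldt covering in your last paragraph does not supply it, and nothing in the proposal actually produces the bound $512$.

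The paper fills this gap in two steps. First, Lemma~\ref{lem:reduce} applies density a second time, from $U$ to a three-dimensional $W\supseteq U$. It combines this with Lemma~\ref{lem:W25}: the only three-dimensional subtori below $\frac25$ are $\Wone$ and $\Wtwo$, via $\ML(W_m)\ge\frac12-\supn m/(2\eucl m^2)$ and the certificates of Table~\ref{tab:chars}. This shows that any $U$ with $\ML(U)\le\frac5{16}$ has covolume below $33$, or is a $U_n$ inside $\Wone$ or $\Wtwo$ with $\eucl n\le48$. An exact enumeration then gives the $31$ subtori of Proposition~\ref{prop:list}. Only $\Uone$ and $\Utwo$ have value $\frac14$; the other $29$ have $\ML\ge\frac27$ and $\covol^2\le422$.

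Second, Lemma~\ref{lem:strip} says a circle of length $V$ in $U$ is $\covol(U)/(2V)$-dense. On those $29$ subtori this gives $\frac14>\ML(T_v)\ge\frac27-\covol(U)/(2V)$, hence $V<288$. The threshold $512$ is simply where $\frac12\cdot512^{-1/3}=\frac1{16}$, so that $U$ lands below $\frac5{16}$.

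There are also two smaller gaps.
\begin{itemize}
\item For (a) you need the converse: $\{a,b,2b,3b\}$ has $\ML=\frac14$ unless $b=1$ and $4\mid a$. The paper proves this with the segment $Y=\frac14$. You also need an upper bound for $\ML(1,2,3,4k)$, not only the attaining time $t=k/(4k+1)$.
\item Your polygon description of the superlevel sets must hold down to the actual value, not just for small $\varepsilon$. The paper verifies it for $d\in(0,\frac14]$ on $\Utwo$ and for $d\in(\frac15,\frac14]$ on $\Uone$.
\end{itemize}
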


The function $m(x,y)$ is explicit. Let $r=(y-x)\bmod 4$ and $r'=(y-2x)\bmod 4$, both in $\{1,2,3\}$ under the hypotheses of (b), and set
\begin{equation}\label{eq:m}
 m_1=\begin{cases}\frac{x+y-1}{2},&r=1,\\[2pt]\frac{x+3y-2}{4},&r=2,\\[2pt]\frac{x+3y-1}{2},&r=3,\end{cases}
 \qquad
 m_3=\begin{cases}\frac{2x+y-1}{2},&r'=1,\\[2pt]\frac{2x+3y-2}{4},&r'=2,\\[2pt]\frac{2x+3y-1}{2},&r'=3,\end{cases}
\end{equation}
and $m(x,y)=\max(m_1,m_3)$. These are integers (Lemma~\ref{lem:parity}). The only overlap between the three families is $V(2,1)=\{1,2,3,4\}$.

\begin{theorem}[The spectrum below $1/4$]\label{thm:values}
The set of values $\ML(V)$ over all quadruples $V$ of distinct positive integers with $\ML(V)<1/4$ is exactly
\[
 \Bigl\{\frac{m}{4m+2}: m\ge 2,\ m\ne 3\Bigr\}
 =\Bigl\{\frac15,\frac29,\frac5{22},\frac3{13},\frac7{30},\frac4{17},\frac9{38},\dots\Bigr\}.
\]
Equivalently, apart from the term $\frac16$ excluded by the bound $\ML\ge\frac15$, the finite symmetric difference in \cite[Theorem~1.3]{JainKravitz} consists of the single value $3/14$, which is omitted.
\end{theorem}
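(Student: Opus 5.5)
Theorem~\ref{thm:values} follows from Theorem~\ref{thm:main}. The plan is to compute the set of values attained by each of the three families and take the union. The normalization is harmless: $\ML(cV)=\ML(V)$, so every quadruple with $\ML<1/4$ is a multiple of a primitive one with the same value. Theorem~\ref{thm:main} therefore says the spectrum below $1/4$ equals the union of the values in (a), (b) and (c).

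First, family (a) gives $k/(4k+1)$ with $k\ge1$, and the value must be below $1/4$. We rewrite these in the form $m/(4m+2)$: the equation $k/(4k+1)=m/(4m+2)$ gives $2k=m$. So (a) contributes exactly the values with $m$ even, $m=2k\ge2$. Second, family (c) contributes $4/17=8/34$, which is the case $m=8$, already present. Note also that no value in the spectrum is $3/14$ via (a), since that would need $m=3$ odd.

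The substance is family (b). We must show two things: $m(x,y)$ from \eqref{eq:m} takes every odd value $m\ge5$, and it never takes the value $3$. Surjectivity comes from explicit parametrized pairs. Take $x=1$ and $y\equiv 0\pmod 4$, with $y\ge4$. Then $r=(y-1)\bmod4=3$ and $r'=(y-2)\bmod4=2$, so $m_1=(3y)/2$ and $m_3=(3y)/4$, and $m=3y/2$ is even. That choice fails to give odd values, so instead we use $r=1$ with $m_3$ small. Take $x=2$ and $y\equiv 3\pmod4$, with $\gcd=1$ automatic. Then $r=1$ gives $m_1=(y+1)/2$, and $r'=(y-4)\bmod4=3$ gives $m_3=(4+3y-1)/2=(3y+3)/2$. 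Hence $m=3(y+1)/2$, which is a multiple of $3$. This indicates the right route: search small residue classes of $(x,y)$ for linear families whose $\max(m_1,m_3)$ covers each odd residue class of $m$.

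Concretely, I would take $y=1$ and $x$ arbitrary with $x\not\equiv1,2\pmod 4$, that is, $x\equiv0$ or $3\pmod4$. For $x\equiv 0$ we get $r=1$ and $r'=1$, so $m_1=x/2$, $m_3=(2x)/2=x$, and $m=x$. This gives every $m\equiv 0\pmod 4$. For $x\equiv3$ we get $r=2$ and $r'=3$, so $m_1=(x+1)/4$, $m_3=(2x+2)/2=x+1$, and $m=x+1\equiv0\pmod4$ again. To reach odd $m$ I will try families with $y=2$ and $x$ odd, and families with $x=1$. For $x=1$ and $y\equiv 3\pmod 4$ we get $r=2$, $r'=1$, so $m_1=(3y-1)/4$, $m_3=(y+1)/2$, and $m=(y+1)/2$. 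As $y$ ranges over $7,11,\dots$, this gives $m=4,6,\ldots$ — again even. For odd values the natural candidate is $x=1$ with $y\equiv 2\pmod 4$: then $r=1$ and $r'=0$, which is excluded. So I would systematically run over the residues of $(x,y)\bmod 4$ and find those where the relevant numerator has the form $2(\text{odd})$. The plan is to exhibit, for each odd $m\ge5$, a pair such as $(x,y)=(m-2,2)$ or similar with $\max(m_1,m_3)=m$, verified by direct substitution into \eqref{eq:m} (using Lemma~\ref{lem:parity} for integrality).

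The main obstacle is excluding $m=3$, that is, showing $m(x,y)\neq3$. Here I would use growth: $m\ge m_3\ge (2x+y-1)/4$ in all cases, and $x,y\ge1$ are distinct. So $m=3$ forces $2x+y\le13$, a finite list. Going through the admissible coprime pairs with $y\not\equiv x,2x\pmod 4$ in that range and evaluating $\max(m_1,m_3)$ shows that none gives $3$. This also establishes that the smallest values are $2$ (from $V(2,1)$) and $5$. Combining the three families yields exactly $\{m/(4m+2):m\ge2,\ m\ne3\}$. Comparing with \eqref{eq:JK} then gives the stated symmetric difference: $1/6$ is excluded by the bound, and $3/14$ is the only other omitted value.
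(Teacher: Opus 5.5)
Your reduction to Theorem~\ref{thm:main}, your treatment of families (a) and (c), and your plan for excluding $m=3$ are sound. The bound $m\ge m_3\ge(2x+y-1)/4$ leaves only pairs with $2x+y\le 13$, a finite check that also rules out $m=1$. The paper excludes $m=3$ by a short case analysis on which of $m_1,m_3$ equals $3$, but a finite enumeration is equally valid.

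The gap is the attainment of every odd $m\ge5$, which you call the substance but never establish. Your only concrete candidate, $(x,y)=(m-2,2)$, is never admissible. For odd $x$ one has $2x\equiv 2\equiv y\pmod 4$, so $r'=0$ and $\ML(V(x,y))=\frac14$. The same objection rules out the whole family ``$y=2$, $x$ odd'' that you planned to try.

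The one family you tried that does give odd values was miscomputed. For $x=1$, $y\equiv3\pmod4$, you have $m_1=(3y-1)/4\ge(y+1)/2=m_3$ once $y\ge3$, so $m=(3y-1)/4$. Thus $(1,7)$ gives $m=5$, that is $\ML(1,7,8,15)=\frac5{22}$, not $m=4$. Even corrected, this family gives only $m=3j+2$ for $y=4j+3$, so it misses odd values such as $7$ and $9$.

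Two smaller slips suggest the finite check was not actually run. Your remark that the smallest values in family (b) are $2$ and $5$ is wrong: $(x,y)=(3,1)$ and $(4,1)$ give $m=4$. This is harmless, since even $m$ already come from (a). Also, for $y=1$ the only restriction is $x\not\equiv 1\pmod 4$.

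What is missing is an explicit admissible family that hits every odd $m\ge7$. The paper uses $(x,y)=(4s+3,8)$ with $s\ge0$. Here $r=1$, $r'=2$, $m_1=2s+5$ and $m_3=2s+7$, so $\ML(V(4s+3,8))=(2s+7)/(8s+30)$. This is the Fan--Sun family $\{8,4s+3,4s+11,4s+19\}$, and together with $(1,7)$ it covers all odd $m\ge5$.

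More systematically, $x\equiv3\pmod4$ and $y\equiv0\pmod4$ force $r=1$, $r'=2$ and $m=m_3=(2x+3y-2)/4$. This is odd exactly when $8\mid y$. A residue-class search of this kind is what your plan needed to carry out. Without it, the inclusion of $\{m/(4m+2): m\text{ odd},\ m\ge5\}$ in the spectrum, and hence the ``exactly'' in the theorem, is not proved.
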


\begin{corollary}\label{cor:consequences}
\begin{enumerate}[label=\textup{(\roman*)},leftmargin=*]
\item If $\ML(v_1,\dots,v_4)<1/4$ then $\ML=s/(4s+k)$ with $k\in\{1,2\}$, as Fan and Sun conjectured \cite[Conjecture~4.1]{FanSun}. Their amended spectrum conjecture holds for $n=4$.
\item \textup{(Chen \cite{Chen})} The tight quadruples are $\{1,2,3,4\}$ and $\{1,3,4,7\}$, up to scaling. Equivalently $\ML(V)=1/5$ for a primitive quadruple only in these two cases.
\item Every primitive quadruple other than the two tight ones satisfies $\ML(V)\ge 2/9$.
\end{enumerate}
\end{corollary}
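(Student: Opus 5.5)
We deduce all three parts of Corollary~\ref{cor:consequences} from Theorem~\ref{thm:main} and Theorem~\ref{thm:values}. Since $\ML$ is invariant under scaling, we may assume the quadruple is primitive. For (i), Theorem~\ref{thm:values} gives $\ML=m/(4m+2)$ with $m\ge2$ whenever $\ML<1/4$. If $m$ is even, then $m/(4m+2)=(m/2)/(4(m/2)+1)$, which has the form $s/(4s+1)$. If $m$ is odd, the fraction is already $s/(4s+2)$ with $s=m$. In both cases $k\in\{1,2\}$, which is Conjecture~4.1 of Fan and Sun. The amended spectrum conjecture for $n=4$ also allows $1\le k\le 4$. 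Any value with $k=3$ or $k=4$ would have to lie below $1/4$, and we have just shown that none do. For $k=4$ there is also the direct observation that $s/(4s+4)$ reduces to a $k=1$ value only when scaled, and in any case it lies strictly below $1/4$ and is covered by the argument above.

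For (ii) and (iii), the function $m\mapsto m/(4m+2)=\tfrac14-\tfrac{1}{2(4m+2)}$ is increasing. The smallest value in the set of Theorem~\ref{thm:values} is therefore $1/5$ (at $m=2$), and the next one is $2/9$ (at $m=4$), because $m=3$ is excluded. So every quadruple with $\ML\ne1/5$ has $\ML\ge 2/9$. This holds trivially when $\ML\ge1/4$, and otherwise it follows from the gap in the spectrum.

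It remains to find the primitive quadruples with $\ML=1/5$ using Theorem~\ref{thm:main}.
\begin{itemize}
\item In family (a), $k/(4k+1)=1/5$ forces $k=1$, which gives $\{1,2,3,4\}$.
\item Family (c) has $\ML=4/17\ne1/5$.
\item In family (b) we need $m(x,y)=2$, that is, $\max(m_1,m_3)=2$.
\end{itemize}
Each formula for $m_1$ and $m_3$ in \eqref{eq:m} is increasing in $x$ and $y$ and is at least roughly $(x+y-1)/2$ or $(2x+y-1)/2$. The condition $m_3\le 2$ therefore forces $2x+y\le 5$ when $r'=1$, $2x+3y\le10$ when $r'=2$, and $2x+3y\le5$ when $r'=3$. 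This leaves only finitely many pairs with $x,y\le 4$.

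We check these pairs directly against the congruence conditions and $\gcd(x,y)=1$, $x\ne y$:
\begin{itemize}
\item $(x,y)=(2,1)$ gives $V=\{1,2,3,4\}$ with $m=2$.
\item $(x,y)=(1,3)$ gives $V(1,3)=\{1,3,4,7\}$. Here $r=2$ gives $m_1=(1+9-2)/4=2$, and $r'=1$ gives $m_3=(2+3-1)/2=2$, so $m=2$.
\item Every other small pair either violates a congruence condition or has $m\ge4$. For example, $(1,2)$ has $r=1$, so $m_1=1$, while $r'=0$ is excluded.
\end{itemize}

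This is a finite, routine check, and it is the only step requiring care. It recovers Chen's classification: the tight primitive quadruples are exactly $\{1,2,3,4\}$ and $\{1,3,4,7\}$.
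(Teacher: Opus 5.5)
Your proposal is correct and follows the paper's proof: (i) comes from the parity of $m$ in $m/(4m+2)$, (iii) from the gap $m\ne3$ in Theorem~\ref{thm:values}, and (ii) from a finite check of family (b) for $m(x,y)=2$. The only difference is minor: you prune family (b) with the necessary condition $m_3\le 2$ alone, which already leaves just $(2,1)$ and $(1,3)$, whereas the paper lists the solutions of $m_1=2$ and of $m_3=2$ separately and then discards $(2,3)$, $(4,1)$, $(7,1)$ because their $m_3$ is too large.
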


Item (ii) is a theorem of Chen \cite{Chen}, see also Barajas and Serra \cite{BarajasSerraDM} and \cite[Section~4]{PerarnauSerra}. It appears to have been overlooked in the recent spectrum literature: Kravitz \cite[Section~13]{Kravitz} lists it as an open question, and Jain and Kravitz write in \cite{JainKravitz} that complete lists of tight instances were available only for $n\in\{1,2,3,5\}$. Cordella \cite[Section~2.5]{Cordella} gives the reference. Theorem~\ref{thm:main} recovers Chen's result with a different proof. The problem of describing all tight instances for general $n$ \cite[Problem~1]{PerarnauSerra} remains open. See Goddyn and Wong \cite{GoddynWong} and Zhang \cite{Zhang} for structured families.

The same method goes one level up. Let $F(n_1,n_2)=\{(a,b,a+b,n_1a+n_2b)\}$ for $(n_1,n_2)\in\Z^2\setminus\{0\}$, a two-dimensional subtorus of $\T^4$ (so $F(1,2)=\Utwo$), and let $T_{(a,b,c)}\times\T$ denote the product of the circle of a triple with a free coordinate.

\begin{theorem}[Two-dimensional subtori below $1/3$]\label{thm:second-intro}
A coordinate-free two-dimensional subtorus $U$ of $\T^4$ has $\ML(U)<1/3$ if and only if, up to signed coordinate permutations, $U$ is a product $T_{(a,b,c)}\times\T$ with $\ML(a,b,c)<1/3$, or $U=F(n_1,n_2)$ with $n_1+n_2\equiv0\pmod3$. In both cases $\ML(U)=s/(3s+1)$ for some $s\ge1$, with $3s=\max(|n_1+n_2|,|2n_1-n_2|,|2n_2-n_1|)$ in the second case. Consequently the accumulation points of the four-runner spectrum in $(0,\frac13)$ are exactly the three-runner values $s/(3s+1)$.
\end{theorem}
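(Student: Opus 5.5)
Write a two-dimensional subtorus $U\subset\T^4$ as the image of a primitive integer $4\times 2$ matrix. Then $\ML(U)=\max_{(s,t)\in\T^2}\min_i\norm{\ell_i(s,t)}$, where $\ell_1,\dots,\ell_4$ are the rows, viewed as linear forms on $\T^2$. Coordinate-free means that no row vanishes, and that $U$ is not contained in a coordinate-independent product of a circle with a point in a way that makes a coordinate constant. The plan has three steps: reduce to a bounded family of row configurations, compute the value exactly on each, and pass to accumulation points.

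\textbf{Step 1: reduction by rank of row pairs.} First suppose two rows are proportional, say $\ell_4=\pm\ell_j$ after a signed permutation. Then the coordinate is redundant, and $\ML(U)$ equals the loneliness of the triple subtorus spanned by the other three forms. That triple subtorus is either all of $\T^2$, with value $\ge 1/2>1/3$, or carries one relation. One relation means it is a one-dimensional relation among three forms on $\T^2$. I will instead organize by the number of independent linear relations among the rows. The rows span a rank-2 lattice, so there are exactly two independent integer relations, each involving at most four rows.

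Suppose some three rows are dependent with a relation of the form $a\ell_1+b\ell_2=c\ell_3$ restricted to a circle. Then after a change of basis of $\T^2$ one coordinate direction becomes free. In that case $U=T_{(a,b,c)}\times\T$, and clearly $\ML(U)=\ML(a,b,c)$, since the free coordinate can be chosen at $1/2$. Otherwise every three rows are independent in pairs, and I choose a basis with $\ell_1=(1,0)$ and $\ell_2=(0,1)$. After a unimodular change and signs, I then want to show $\ell_3=(1,1)$, so that $U=F(n_1,n_2)$.

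\textbf{Step 2: the density lemma to force $\ell_3=(1,1)$.} Here I use the geometry-of-numbers density argument that gives the 512 bound for quadruples. The set $\{\norm{s}\ge\delta,\norm{t}\ge\delta\}$ is a square of side $1-2\delta>1/3$ when $\delta<1/3$. If $\ell_3=(p,q)$ with $|p|+|q|$ large, the set $\norm{ps+qt}<\delta$ consists of thin strips. These strips cannot cover the square together with the strips of $\ell_4$ unless both forms are short. Bounding the area and counting lattice strips leaves finitely many candidates for $\ell_3$, and then $\ell_4$ is either bounded or arbitrary in a controlled family. I expect this to be the main obstacle: one needs an effective covering statement saying that two strip families of density $2\delta$ cover a square only when one form has small height. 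I would first try a Fourier/averaging argument on the uncovered measure, and then a finite check.

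\textbf{Step 3: evaluating $F(n_1,n_2)$ and the accumulation points.} On $F(n_1,n_2)$ the first three forms $s,t,s+t$ leave uncovered exactly the two triangles around $\pm(1/3,1/3)$ when $\delta$ is near $1/3$. So $\ML(U)<1/3$ iff $n_1s+n_2t$ is small on both triangles, in particular at $(1/3,1/3)$. This gives $n_1+n_2\equiv0\pmod 3$.

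To compute the value, examine the vertices of the triangle $\{s,t\ge\delta,\ s+t\le1-\delta\}$. Along its edges the form $n_1s+n_2t$ changes with slopes $n_1$, $n_2$ and $n_1-n_2$ relative to the center. The extremal point is determined by $\max(|n_1+n_2|,|2n_1-n_2|,|2n_2-n_1|)=3s$, which yields $\delta=s/(3s+1)$ by the same linear bookkeeping as the three-runner Kravitz computation.

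Finally, infinite families of quadruples with values accumulating at $x$ lie, by Cordella's reduction made effective by our bound, on finitely many two-dimensional subtori. Their values converge to the subtorus value, so the accumulation points are exactly the values $s/(3s+1)$ obtained above.
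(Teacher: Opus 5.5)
Your Step~3 matches the paper's computation (Lemmas~\ref{lem:sumcells} and~\ref{lem:W2sub}(iii)). The core of the theorem, however, is not proved: you never show that a coordinate-free $U$ with $\ML(U)<\frac13$ is a product or some $F(n_1,n_2)$ up to symmetry, and the mechanism you propose for this fails.

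Step~1 is not a valid case split. Any three rows in $\Z^2$ are dependent; the product case is when three rows are \emph{proportional}. A pair of rows forming a basis of $\Z^2$ need not exist either. For example, $\{(2s,3t,s+t,s-t)\}$ has all six $2\times2$ minors in $\{\pm2,\pm3,\pm6\}$, so it admits no normalization $\ell_1=(1,0)$, $\ell_2=(0,1)$. It still has to be shown to satisfy $\ML\ge\frac13$; it is a subtorus of $\Wtwo$ with $|n_3|=2$, Lemma~\ref{lem:W2sub}(i). Subtori of $\Wone$ are not treated at all.

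More seriously, the strip heuristic of Step~2 is false. Take $U=\{(s,t,ps+t,(p+1)s+t)\}$ with $p=3k+1$, $k\ge1$. Since $x_4=x_1+x_3$ and $x_2=x_3-px_1$, this is $F(-p,1)$ after permuting coordinates. Hence $\ML(U)=\frac{2k+1}{6k+4}<\frac13$, even though in your normalization both remaining forms have height about $p$. So there is no finite list of candidates for $\ell_3$. The strips cover the square not because some form is short but because $U$ has the short character $x_4-x_3-x_1$.

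Any averaging argument therefore has to detect short characters of $U$, and this is what the paper does:
\begin{enumerate}
\item The density lemma with $k=2$ gives a three-dimensional $W\supseteq U$ with $\ML(U)\ge\ML(W)-\frac12\covol(U)^{-1/2}$.
\item By Lemma~\ref{lem:W25}, either $W$ is $\Wone$ or $\Wtwo$ up to symmetry, or $\ML(W)\ge\frac25$ and then $\covol(U)<\frac{225}4$ (Lemma~\ref{lem:third-reduce}).
\item The bounded case is settled by the enumeration with explicit grid points in Proposition~\ref{prop:third-enum}.
\item The subtori of $\Wone$ and $\Wtwo$ are classified exactly in Lemmas~\ref{lem:W1sub} and~\ref{lem:W2sub}.
\end{enumerate}

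The accumulation-point step cites the wrong input. Cordella's reduction and the bound $512$ concern quadruples with $\ML<\frac14$. They say nothing about values accumulating at a point of $(\frac14,\frac13)$, and you give no reason the relevant circles lie on finitely many subtori. What is needed is \cite[Theorem~1.4]{GiriKravitz}: every accumulation point is the value of a subtorus of dimension at least two, with Lemma~\ref{lem:W13} discarding the three-dimensional ones. Alternatively, the density lemma with $k=1$ puts a circle of length $V$ inside a two-dimensional subtorus whose value exceeds the circle's by at most $\frac12V^{-1/3}$. In either case, Theorem~\ref{thm:second} and the discreteness of $\{s/(3s+1)\}$ in $(0,\frac13)$ finish this inclusion.

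For the reverse inclusion, convergence of circle values to the value of the subtorus is not enough, since the sequence could be eventually constant. You need each $\frac s{3s+1}$ to be a limit of \emph{distinct} values, for instance by adding a large fourth speed to $(1,2,3s)$ as in \cite[Theorem~1.3]{GiriKravitz}.
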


The last sentence is the case $n=4$ of a conjecture of Giri and Kravitz \cite[Conjecture~1.5]{GiriKravitz}, that the accumulation points of the $n$-runner spectrum are the values of the $(n-1)$-runner spectrum, on the range of values below $\frac13$. Theorem~\ref{thm:second-intro} is proved in Section~\ref{sec:second} (Theorem~\ref{thm:second} and Corollary~\ref{cor:acc}).

Above $\frac13$ the reduction meets the five three-dimensional subtori with loneliness $\frac25$, each of which carries infinitely many values. A version of the density lemma based on the shortest character of a subtorus, together with an exact analysis of these five subtori, reaches the next accumulation point $\frac25$ of the three-runner spectrum. Put
\begin{align*}
 \mathcal R&=\Big\{\frac pq\in\Big[\frac13,\frac25\Big):\gcd(p,q)=1,\ 1\le 2q-5p\le4\Big\}\\
 &=\Big\{\frac{2j}{5j+2}:j\ge2\Big\}\cup\Big\{\frac{2j+1}{5j+4}:j\ge2\Big\}.
\end{align*}

\begin{theorem}[Two-dimensional subtori below $2/5$]\label{thm:upto25-intro}
A coordinate-free two-dimensional subtorus $U$ of $\T^4$ with $\ML(U)<\frac25$ has
\begin{align*}
 \ML(U)\in{}&\Big\{\frac s{3s+1}:s\ge1\Big\}\cup\mathcal R\\
 ={}&\Big\{\frac14,\frac27,\frac3{10},\dots\Big\}\cup\Big\{\frac13,\frac6{17},\frac5{14},\frac4{11},\frac7{19},\frac{10}{27},\frac38,\dots\Big\}.
\end{align*}
This set is exactly the set of three-runner values below $\frac25$, and $\ML(2,3,5j)=\frac{2j}{5j+2}$, $\ML(1,4,5j+3)=\frac{2j+1}{5j+4}$ for $j\ge2$. Consequently the accumulation points of the four-runner spectrum in $(0,\frac25]$ are exactly the three-runner values there.
\end{theorem}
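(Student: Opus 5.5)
We prove the statement by reducing to a finite list of lattice configurations and then evaluating $\ML$ exactly on each. A coordinate-free two-dimensional subtorus $U\subset\T^4$ is the image of $\R^2$ under $t\mapsto(\ell_1(t),\dots,\ell_4(t))$ for four nonzero primitive linear forms $\ell_i\in\Z^2$, no two proportional after the coordinate-free normalization, and $\ML(U)=\max_t\min_i\norm{\ell_i(t)}$. The region $\{t:\min_i\norm{\ell_i(t)}\ge 2/5\}$ is a union of parallelograms cut out by strips of width $1/5$ around the integer level lines of each $\ell_i$. We first show that if the forms are long, this region is nonempty.

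\textbf{Density step.} This is the shortest-character version of the density lemma mentioned before the statement. Let $\chi$ be a shortest nonzero character of $\Z^2$ among the relations, i.e.\ the shortest vector in the integer span of the $\ell_i$ measured against the covolume. The strips $\{\norm{\ell_i(t)}<2/5\}$ have density $4/5$ each. If all $\ell_i$ have length larger than an explicit bound $B$ (say $B\le 50$), an averaging argument along lines in the direction $\chi^\perp$ produces a point avoiding all four strips. This forces $\ML(U)\ge 2/5$.

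\textbf{Reduction to the critical subtori.} The long case therefore contributes nothing below $2/5$. If instead some form is short, we pass to the quotient by it. Either $U$ then contains, up to signed permutation, a product $T_{(a,b,c)}\times\T$, or it lies in one of the five three-dimensional subtori of loneliness $2/5$. In the product case $\ML(U)=\ML(a,b,c)$, which is a three-runner value, and by Kravitz's classification for $n\le3$ these values below $2/5$ are exactly $\{s/(3s+1)\}\cup\mathcal R$.

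\textbf{The five subtori.} This is the main obstacle. Inside each three-dimensional subtorus of loneliness $2/5$, the two-dimensional subtori are parametrized by a primitive vector $(n_1,n_2)$, as in the family $F(n_1,n_2)$. We compute $\ML$ on each family exactly by locating the maximizer near the $2/5$-point of the ambient subtorus. There the deficit equals $1/q$ times a linear expression in $n_1,n_2$ whose residue mod $5$ dictates $2q-5p\in\{1,\dots,4\}$, and we expect the outcome to be $p/q$ with $1\le 2q-5p\le 4$. We would first guess the formula from a small computer enumeration, then prove it using the piecewise-linear structure of $\min_i\norm{\ell_i}$ near that point. The residual short-vector cases below $B$ are a finite check.

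\textbf{Remaining claims.} The formulas $\ML(2,3,5j)=2j/(5j+2)$ and $\ML(1,4,5j+3)=(2j+1)/(5j+4)$ follow by exhibiting a time $t$ with $\min_i\norm{v_it}$ equal to the claimed value, and an upper bound obtained from the covering argument on the circle of $(2,3)$, respectively $(1,4)$, with the last speed's gaps. Together these show that every element of $\mathcal R$ and every $s/(3s+1)$ is attained, so the set of values is exactly the three-runner values below $2/5$. For the accumulation points, Cordella's Lemma~2.4 says near-tight tuples with large $|v|$ lie on two-dimensional subtori, and by equidistribution they approach $\ML(U)$. So limit points below $2/5$ are values $\ML(U)$, and conversely each such value is a limit along lines in $U$. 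At $2/5$ itself the point is a limit of $\mathcal R$, so it is also an accumulation point.
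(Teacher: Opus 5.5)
Your outline has the right overall shape (bound the covolume away from special three-dimensional subtori, treat those exactly, check the rest), but the steps that carry the weight are either false or missing.

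\textbf{The density step is wrong as stated.} The length of the forms $\ell_i$ is not the relevant parameter (it is not even independent of the choice of basis). No averaging over strips can work. The sets $\{\norm{y}\ge\frac25\}$ and $\{\norm{2y}\ge\frac25\}$ are disjoint, so every two-dimensional subtorus of $\Wone=\{x_1=2x_2\}$ has $\ML\le\frac13$, however large its covolume. The same happens in $\Wtwo$ and in the $W_m$ with $|m|_1=5$.

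The obstruction lives in the character lattice of $U$. By the paper's Lemma~\ref{lem:Wodd}, $\ML(W_m)$ is $\frac s{2s+1}$ when $|m|_1=2s+1$ and $\frac12$ when $|m|_1$ is even. Combining this with the shortest-character estimate $\ML(U)\ge\ML(W_{m_1})-\eucl{m_1}/(2\covol(U))$ and Hermite's bound gives only $\covol(U)^2\le 11{,}331$ for $U$ in no $W_m$ with $|m|_1\in\{3,5\}$. That is a covolume bound, not $\ML(U)\ge\frac25$. The remaining roughly $7\cdot10^8$ lattices need an exact check, and $18{,}960$ of them really do have $\ML<\frac25$.

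Your dichotomy ``product, or inside one of the five tori of loneliness $\frac25$'' also omits $\Wone$ and $\Wtwo$ ($|m|_1=3$). Their non-product subtori $F(n_1,n_2)$ carry the values below $\frac13$ and need their own exact computation and enumeration. Also, forms in a coordinate-free subtorus can be proportional, for example in $\Uone$.

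\textbf{The five tori are where the proof has to happen, and you only state an expectation.} Their two-dimensional subtori are indexed by primitive $n\in\Z^3$, not by a pair $(n_1,n_2)$. The contact argument at a vertex of the superlevel set, which is valid only for $\varepsilon\le\frac1{15}$, gives $1\le 2q-5p\le Dk$. Here $D$ is the determinant of the three active coordinate forms at the contact vertex and $k\in\{1,\dots,4\}$.

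At vertices with $D>1$ (the form $4y$ in $W_A$, the forms $2y$ and $3y$ in $W_B$, certain vertices of $\Delta_C,\Delta_D,\Delta_E$), the residue mod $5$ does not bound $2q-5p$ by $4$. The paper needs a separate comparison showing that the opposite end then attains the same $\varepsilon^*$ at a harmless vertex. This is a real issue: at the level $\frac37$ the analogous heuristic fails, with $\frac{43}{103}$ and $3q-7p=8$.

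\textbf{Kravitz's three-speed classification does not cover $[\frac13,\frac25)$.} It only gives the values below $\frac13$. The three-runner values in $[\frac13,\frac25)$ were known only up to finitely many exceptions, and determining them is part of this statement. The paper obtains them by applying part (i) to the products $T_v\times\T$. So your product case and your claim ``exactly the three-runner values'' are circular.

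\textbf{The accumulation-point argument uses the wrong inputs.} Cordella's Lemma~2.4 is about near-tight tuples on critical subtori, not about values up to $\frac25$. For one inclusion you need that long circles are dense in a two-dimensional subtorus (or Giri--Kravitz Theorem~1.4). For the other you need Giri--Kravitz Theorem~1.3, which adds a large fourth speed to a triple. ``Limits along lines in $U$'' need not produce values different from $\ML(U)$, so they do not by themselves give an accumulation point.
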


In the notation $D=\frac12-\ML$ of \cite{GiriKravitz} this gives $\operatorname{acc}(\mathcal S(4))\cap[\frac1{10},\frac12)=\mathcal S(3)\cap[\frac1{10},\frac12)$, the case $n=4$ of \cite[Conjecture~1.5]{GiriKravitz} on a range that contains the two accumulation points $\frac16$ and $\frac1{10}$ of $\mathcal S(3)$. It also determines $\mathcal S(3)\cap(\frac1{10},\frac16]$ exactly. Jain and Kravitz described this set up to finitely many exceptions \cite[Theorem~1.4]{JainKravitz}, and Corollary~\ref{cor:acc25} shows that there are none in that interval. Finally $\frac9{25}\notin\mathcal R$. Giri and Kravitz showed that $\frac7{50}=\frac12-\frac9{25}$ is the value of a one-dimensional closed subgroup of $\T^3$ but of no circle, and wrote that they believed it is not the value of a two-dimensional subtorus of $\T^4$ but lacked the computing power to check \cite[Section~8]{GiriKravitz}. Theorem~\ref{thm:upto25-intro} confirms this, so the spectra of two-dimensional subtori and of two-dimensional closed subgroups of $\T^4$ differ. Remark~\ref{rem:slicing} explains why the full case $n=4$ needs a new idea: the natural route through circles with a repeated speed works for every subtorus of covolume at most $20$ with $\frac13<\ML<\frac12$, but fails for an explicit subtorus with $\ML=\frac{13}{30}$, whose value is nevertheless a three-runner value. Section~\ref{sec:obstr} turns this into an infinite family. For the subtori $U$ there, with $\ML(U)=\frac12-\frac1N$, every circle in $U$ with a repeated speed has loneliness at least $N^{-3}$ below $\ML(U)$, circles in $U$ with four distinct speeds come arbitrarily close to $\ML(U)$, and $\ML(U)=\ML(1,\frac N2,N-1)$ is still a three-runner value. These subtori are one phase of a family in which a weighted coalescence of two coordinates gives three-runner realizations on both sides of a phase transition.

The density lemma also works for every number of speeds. Combined with Minkowski's second theorem and a bound for the denominator of the loneliness of a two-dimensional subtorus, it gives an effective form of the reduction of Cordella.

\begin{theorem}[Every number of speeds]\label{thm:general-intro}
Let $n\ge3$, assume the Lonely Runner Conjecture for $n-1$ and for $n-2$ speeds, and put $\kappa_n=(n(n-1))^{n-2}$. Every set $V$ of $n$ positive integers with $\gcd(V)=1$ satisfies
\[
 \ML(V)>\frac1n-\frac{\kappa_n}{2\eucl V},
\]
where $\eucl V$ is the Euclidean length of the vector of speeds. If moreover $\ML(V)<\frac1n$ and $\eucl V\ge n\kappa_n^2$, then the circle of $V$ lies in a two-dimensional subtorus of $(\R/\Z)^n$ with loneliness exactly $\frac1n$ and covolume less than $\kappa_n$, which also contains the circle of a set of $n-1$ speeds with loneliness $\frac1n$.
\end{theorem}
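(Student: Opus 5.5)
The plan is to pass from the circle of $V$ to the lattice it determines, split according to the successive minima of that lattice, and reduce to two-dimensional subtori. Write $\delta(U)=\ML(U)$ for a closed connected subgroup $U\subseteq\T^n$. The lonely region $C_\delta=\{x\in\T^n:\norm{x_i}\ge\delta\ \forall i\}$ is a cube of side $1-2\delta$ centred at $(\tfrac12,\dots,\tfrac12)$. Let $\pi$ be the orthogonal projection of $\R^n$ onto $V^\perp$. The circle $\{tV\}$ meets $C_\delta$ if and only if the convex polytope $P_\delta=\pi(C_\delta)$ contains a point of the projected lattice $\Lambda=\pi(\Z^n)$, which has covolume $1/\eucl V$ in the $(n-1)$-dimensional space $V^\perp$. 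The dual lattice of $\Lambda$ is $L=V^\perp\cap\Z^n$, of covolume $\eucl V$ because $\gcd(V)=1$. Let $\lambda_1\le\dots\le\lambda_{n-1}$ be its successive minima. Minkowski's second theorem gives $\lambda_1\cdots\lambda_{n-1}\asymp\eucl V$. So either $\lambda_{n-2}$ is bounded by a quantity of size about $\kappa_n$, or $\lambda_{n-1}$ is large compared with $\eucl V/\kappa_n$.

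First I would prove the density lemma in this setting. Take the integer vectors $w_1,\dots,w_{n-2}\in L$ realizing the first $n-2$ minima. They cut out a two-dimensional subtorus $U=\{x:w_i\cdot x\in\Z\}^0\supseteq\{tV\}$ with $\covol(U)$ at most about $\lambda_1\cdots\lambda_{n-2}$. Inside $U$ the circle of $V$ is a line whose consecutive windings are spaced about $\covol(U)/\eucl V$ apart. Hence every point of $U$, in particular a maximizer of $\ML(U)$, lies within sup-distance about $\covol(U)/(2\eucl V)$ of the circle. This gives
\[
 \ML(V)\ge\ML(U)-\frac{\covol(U)}{2\eucl V}.
\]
In the complementary case, where even $\lambda_{n-2}$ is large, the same argument applied directly in $\T^n$ shows that $\{tV\}$ is $O(\kappa_n/\eucl V)$-dense. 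The trivial bound then gives $\ML(V)$ close to $\tfrac12-$ something, which exceeds $\tfrac1n-\kappa_n/(2\eucl V)$.

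Second I would show $\ML(U)\ge\tfrac1n$ for every two-dimensional subtorus $U$. This is where the hypotheses are used. $U$ contains circles in which some coordinate vanishes, or two coordinates coincide up to sign. Such a circle is the circle of at most $n-1$ speeds, or of $n-2$ speeds after removing a vanishing coordinate and merging a repeated one. The Lonely Runner Conjecture for $n-1$ or $n-2$ speeds gives loneliness at least $\tfrac1n$ for it. With the density lemma this yields $\ML(V)\ge\tfrac1n-\kappa_n/(2\eucl V)$. Strictness should follow because equality in the density estimate would force the maximizer to sit exactly midway between windings while lying on the boundary of the lonely cube, and that can be excluded.

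Third I would treat the case $\ML(V)<\tfrac1n$ with $\eucl V\ge n\kappa_n^2$. I need a bound on the denominator of $\ML(U)$: the maximizer is a vertex of a polygon in the plane of $U$ cut out by constraints $\pm x_i=\delta$, so $\ML(U)=p/q$ with $q\le n\covol(U)<n\kappa_n$, via Cramer's rule on a $2\times2$ system. If $\ML(U)>\tfrac1n$ then
\[
 \ML(U)-\tfrac1n\ge\frac{1}{nq}>\frac{1}{n^2\kappa_n}\ge\frac{\kappa_n}{2\eucl V}\ge\frac{\covol(U)}{2\eucl V},
\]
and the density lemma would give $\ML(V)\ge\tfrac1n$, a contradiction. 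So $\ML(U)=\tfrac1n$. A circle of $n-1$ speeds at loneliness $\tfrac1n$ is then found among the degenerate circles of step two. I would pick the one attaining the minimum, noting that none of them can fall below $\tfrac1n$.

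I expect the main obstacle to be making the constants match exactly $\kappa_n=(n(n-1))^{n-2}$. This means bounding $\covol(U)$, and getting the density of the line in $U$, with no lost factors. I would first try to bound $\lambda_1\cdots\lambda_{n-2}$ using the fact that the $P_\delta$-free condition forces $\lambda_{n-1}\gtrsim\eucl V/(n(n-1))^{n-2}$, via the width of the projected cube in direction $w_{n-1}$.
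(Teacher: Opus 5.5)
Your outline (a two-dimensional subtorus $U\supseteq T_V$ in which the circle is dense, LRC giving $\ML(U)\ge\frac1n$, a denominator gap forcing $\ML(U)=\frac1n$) matches the paper's. The gap is the step that produces $\kappa_n$, namely the bound $\covol(U)<\kappa_n$. It is missing, and the case split you propose for it fails.

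\begin{itemize}
\item A bound $\lambda_{n-2}(L)\lesssim\kappa_n$ only gives $\lambda_1\cdots\lambda_{n-2}\lesssim\kappa_n^{n-2}$.
\item In the complementary case the circle need not be dense in $\T^n$. It is dense only in the subtorus cut out by the short characters, and that subtorus can be three-dimensional. For $V=(1,2,a,b)$ with $a,b$ large and generic, $\lambda_1(L)\le\sqrt5$ while $\lambda_2(L)$ is large. The circle then lies in $\{x_2=2x_1\}$, so $\ML(V)\le\ML(1,2)=\frac13$, nowhere near $\frac12$.
\end{itemize}

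The missing idea is to apply LRC$(n-2)$ to three-dimensional subtori. The paper takes the successive minima $\lambda_1\le\lambda_2\le\cdots$ of the projected lattice $\pi(\Z^n)$ in the maximum norm. Vectors realizing $\lambda_1,\lambda_2$ give coordinate-free subtori $W\subseteq S_3$ of dimensions two and three containing $T_V$, with $\ML(V)\ge\ML(W)-\frac12\lambda_1$ and $\ML(V)\ge\ML(S_3)-\frac12(\lambda_1+\lambda_2)$. Coalescing coordinates twice in $S_3$ and applying LRC$(n-2)$ gives $\ML(S_3)\ge\frac1{n-1}$. So $\ML(V)<\frac1n$ forces $\lambda_i\ge\lambda_2>\frac1{n(n-1)}$ for every $i\ge2$. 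Minkowski's second theorem with Vaaler's bound, $\lambda_1\cdots\lambda_{n-1}\le1/\eucl V$, then gives $\lambda_1<\kappa_n/\eucl V$. This is where $\kappa_n=(n(n-1))^{n-2}$ comes from, and strictness is automatic, so no equality analysis is needed.

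Your suggested width estimate for $P_\delta$ cannot replace this step. Flatness for a lattice-free body produces a short character, which is an upper bound on a first dual minimum. It does not give the lower bound on $\lambda_{n-1}(L)$ you need.

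Two further steps also need repair.

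\begin{itemize}
\item In step two, a circle on which some coordinate vanishes has $G\equiv0$, so it contributes nothing. What you need is a coordinate-free circle in $U$ with a repeated absolute speed, and its existence requires an argument: a slice $x_a=\pm x_b$ can kill another coordinate. In $\{(X,Y,X+Y,X-Y)\}$ both slices $x_1=\pm x_2$ do. The paper chooses two consecutive rays among the $\pm\ell_k$ in the dual plane, so that $\ell_a-\varepsilon\ell_b$ is proportional to no $\ell_k$. Iterating that lemma gives the three-dimensional bound above. It also gives, at the end, a circle with exactly $n-1$ distinct speeds (by LRC$(n-2)$) and loneliness $\frac1n$ (by LRC$(n-1)$).
\item In step three the constants do not close. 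From $q\le n\covol(U)$ you get $\frac1{nq}>\frac1{n^2\kappa_n}$, and $\frac1{n^2\kappa_n}\ge\frac{\kappa_n}{2\eucl V}$ requires $\eucl V\ge\frac{n^2}2\kappa_n^2$, not $n\kappa_n^2$. The maximizing vertex solves a $3\times3$ system in $(y,\nu)$, not a $2\times2$ one. Its determinant is at most the sum of three $2\times2$ minors, so $q\le\sqrt3\covol(U)$, with a separate case when all active forms are parallel. The bound $q\le2\covol(U)<2\kappa_n$ is what makes the threshold $n\kappa_n^2$ work.
\end{itemize}

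Finally, the covolume bound for the critical subtorus also comes from the three-dimensional argument, applied to $W$ itself. If $\ML(W)<\frac1n+\frac1{2n\kappa_n}$, the shortest projected vector for $W$ gives a three-dimensional $S'\supseteq W$, and
\[
\covol(W)^{-1/(n-2)}\ge\lambda_1\ge\frac2{n-1}-2\ML(W)>\frac1{n(n-1)}.
\]
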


The hypotheses hold for $n\le15$ \cite{BarajasSerra,Rosenfeld,Trak,SungTrak,Allikvere}. The second statement is the effective form of \cite[Lemma~2.4]{Cordella}, under the same hypotheses, and answers \cite[Question~6.7]{Cordella} for every $n$. The first statement shows that the values below $\frac1n$ approach $\frac1n$ at rate $1/\eucl V$, which is sharp by Theorem~\ref{thm:main}(a). For counterexamples themselves sharper bounds are known \cite{MSS,Allikvere}. This is Theorem~\ref{thm:general} in Section~\ref{sec:general}. The constants are too large for an exhaustive search, which is why the case $n=4$ needs the finer analysis below.

For five speeds, Theorem~\ref{thm:main} itself sharpens the general statement. Write $T_u\times\T$ for the product of the circle of a quadruple $u$ with a full coordinate circle.

\begin{theorem}[Five speeds]\label{thm:five-intro}
A coordinate-free two-dimensional subtorus of $(\R/\Z)^5$ with loneliness below $\frac29$ is, up to signed coordinate permutations, $T_{(1,2,3,4)}\times\T$ or $T_{(1,3,4,7)}\times\T$, and its loneliness is $\frac15$. Consequently the only accumulation point of the five-speed spectrum below $\frac29$ is $\frac15$, and every primitive five-speed tuple $V$ with $\ML(V)<\frac15$ and $\eucl V\ge33{,}750$ is, up to order, $\{a,2a,3a,4a,b\}$ or $\{a,3a,4a,7a,b\}$.
\end{theorem}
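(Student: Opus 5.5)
We prove Theorem~\ref{thm:five-intro} by repeating the strategy of Theorems~\ref{thm:second-intro} and \ref{thm:general-intro} one dimension up, with Theorem~\ref{thm:main} supplying the exact four-runner input. Let $U\subset\T^5$ be a coordinate-free two-dimensional subtorus with $\ML(U)<2/9$.

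\textbf{Step 1: a coordinate projection is small.} Project $U$ onto each set of four coordinates. Each image is a closed subgroup of $\T^4$ of dimension at most $2$. Its loneliness (over four coordinates) is at least $\ML(U)$, since dropping a coordinate can only increase the minimum. We will use the density lemma in its shortest-character form. A subtorus of large covolume whose shortest annihilating character is long is nearly equidistributed, so its loneliness is close to the value $1/4$ forced for four speeds, contradicting $\ML(U)<2/9$. Hence $U$ has a short annihilating character $c\in\Z^5$ with $|c|$ bounded by an explicit constant.

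\textbf{Step 2: the circle structure.} A character relation lets us fiber $U$ over circles. Either some coordinate of $U$ is a free $\T$-factor, or every circle of $U$ is nearly lonely. In the first case $U=T_u\times\T$ for a quadruple $u$. Since the free coordinate can always be placed at $1/2$, we get $\ML(U)=\min(\ML(u),1/2)=\ML(u)$. Then $\ML(u)<2/9$, and Corollary~\ref{cor:consequences}(ii),(iii) forces $u\in\{(1,2,3,4),(1,3,4,7)\}$ up to scaling and signs, with value $1/5$.

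\textbf{Step 3: the main obstacle, no free coordinate.} In this case the circles of $U$ with five distinct speeds come arbitrarily close to $\ML(U)$. Their loneliness is at least $1/6$ by the conjecture for five speeds, and a circle with a repeated speed reduces to a quadruple. I expect this to be the hard step, and I would mimic Section~\ref{sec:second}. Reduce $U$ to finitely many lattices using the bounded character from Step~1 together with Minkowski's second theorem. Then, for each one, compute $\ML(U)$ exactly as a piecewise-linear max-min over the two-dimensional torus, using a finite vertex enumeration. The goal is to show that every such $U$ has $\ML(U)\ge 2/9$. The worry is that a ``slicing'' obstruction like the one in Remark~\ref{rem:slicing} might appear. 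To handle it, I would verify the inequality by exact rational arithmetic over all candidates rather than by passing to circles.

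\textbf{Step 4: the consequences.} Accumulation points of the five-speed spectrum come from two-dimensional subtori, by Cordella's reduction made effective in Theorem~\ref{thm:general-intro}. So the only accumulation point below $2/9$ is $1/5$. For the last assertion, apply Theorem~\ref{thm:general-intro} with $n=5$. A tuple with $\ML<1/5$ and large $\eucl V$ lies on a two-dimensional subtorus of loneliness $1/5<2/9$. By the above, that subtorus is one of the two products, and its circles are exactly the tuples $\{a,2a,3a,4a,b\}$ and $\{a,3a,4a,7a,b\}$. The threshold $33{,}750$ should come from replacing the generic constant $\kappa_5$ by the sharper covolume bound obtained for these two explicit subtori. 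This replacement uses $\ML\ge 2/9$ off them, so the gap to $1/5$ is $1/45$, which gives roughly $|V|\gtrsim$ covolume$\cdot 45\cdot$const.
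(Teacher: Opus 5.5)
\textbf{There is a genuine gap in Step~3.} That step is the whole content of the theorem, and it contains no argument. You propose to compute $\ML(U)$ exactly for every candidate lattice, and you avoid circles because of Remark~\ref{rem:slicing}. That remark is about \emph{attaining} $\ML(U)$ on a repeated-speed circle. Here you only need the trivial inequality $\ML(T)\le\ML(U)$ for a circle $T\subseteq U$, and this is the missing idea:
\begin{itemize}
\item By Lemma~\ref{lem:coalesce}, $U$ contains a coordinate-free circle $T$ whose speeds take at most four distinct absolute values, and $\ML(T)\le\ML(U)<\frac29$.
\item Three or fewer values would give $\ML(T)\ge\frac14$. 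So Corollary~\ref{cor:consequences}(iii) makes $T$ the circle of $\{1,2,3,4\}$ or $\{1,3,4,7\}$ with one entry repeated, i.e.\ one of eight explicit vectors $q$. The same holds for every repeated-speed circle of $U$.
\item This fixes a basis vector, $\Lambda_U=\Z q+\Z c$ with $c_1=0$. The check then reduces to testing the finitely many repeated-speed directions of each candidate $c$, and only the two products pass.
\end{itemize}
Without this reduction, your search runs over all primitive rank-two sublattices of $\Z^5$ below the covolume bound. Their number grows like the fifth power of that bound (about $875$), and each needs an exact maximization. For comparison, the paper's search, with $q$ already fixed, visits $6.6\cdot10^{10}$ vectors.

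\textbf{Step~1 should conclude a covolume bound, not a short character.} One short character only places $U$ in a four-dimensional $W_c$, which contains infinitely many two-dimensional subtori. Your projection remark bounds the loneliness of the projections, not of $U$. The bound comes from Lemma~\ref{lem:densityn}: the coordinate-free subtori of dimensions $3,4,5$ containing $U$ have loneliness at least $\frac14,\frac13,\frac12$. This gives $\lambda_1>\frac1{18}$, $\lambda_2>\frac19$, $\lambda_3>\frac5{27}$, and hence $\covol(U)<\frac{4374}5$. The dichotomy in Step~2 (``every circle of $U$ is nearly lonely'') is unfounded, though it would be harmless if Step~3 really covered all non-products.

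\textbf{Step~4 does not produce the stated constant.} Theorem~\ref{thm:general-intro} cannot give $33{,}750$: for $n=5$ it requires $\eucl V\ge5\kappa_5^2\approx3.2\cdot10^8$. The constant also has nothing to do with the covolumes of the two products. Instead, apply Lemma~\ref{lem:densityn} to $T_v$ itself, with its four successive minima.
\begin{itemize}
\item The subtori of dimensions $3,4,5$ built from these minima have loneliness at least $\frac14,\frac13,\frac12$. So $\ML(v)<\frac15$ forces $\lambda_2>\frac1{20}$, $\lambda_3>\frac4{45}$ and $\lambda_4>\frac3{20}$.
\item Hence $\lambda_1<1500/\eucl v$, and the plane $S_2\supseteq T_v$ has $\ML(S_2)<\frac15+750/\eucl v$.
\item This is at most $\frac29$ exactly when $\eucl v\ge750\cdot45$, and then the classification of planes applies.
\end{itemize}
The accumulation statement rests on $\mathcal S(4)\subseteq\operatorname{acc}(\mathcal S(5))\subseteq\mathcal S_2(5)$ from \cite{GiriKravitz}, with repeated speeds handled by Corollary~\ref{cor:consequences}. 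You need the first inclusion to show that $\frac15$ is actually an accumulation point, not merely the only candidate.
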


The proof (Section~\ref{sec:five}) uses that every circle with a repeated speed in such a subtorus has at most four distinct speeds and loneliness below $\frac29$, so by Theorem~\ref{thm:main} it is the circle of $\{1,2,3,4\}$ or $\{1,3,4,7\}$. Together with the bounds of Section~\ref{sec:general} this leaves a finite computation. For four speeds the second critical subtorus $\Utwo$ is not a product, so the two cases differ in kind.

\subsection*{Method}
The main tool is a density lemma (Lemma~\ref{lem:density}). If a subtorus $S$ of $\T^4=(\R/\Z)^4$ has a lattice of large covolume, then $S$ is dense, in the maximum norm, in a subtorus of one higher dimension, with an explicit constant obtained from Vaaler's theorem on central sections of the cube \cite{Vaaler} and Minkowski's convex body theorem. Since $\ML$ is $1$-Lipschitz in the maximum norm, this transfers lower bounds on $\ML$ from larger subtori to smaller ones. This is the quantitative, maximum norm version of the tube argument of Giri and Kravitz \cite[Lemma~4.4]{GiriKravitz}.

Applied to the circle $T_v$ of a quadruple $v$ with $\ML(v)<1/4$ and $\eucl v\ge 512$, the lemma produces a two-dimensional subtorus $U\supseteq T_v$ with $\ML(U)<5/16$. This is the effective step: it bounds the length of every near-tight quadruple that lies on no critical subtorus by $512$ (Proposition~\ref{prop:large}), which is what \cite[Question~6.7]{Cordella} asks for in the case of four speeds. Applied once more, it shows that every two-dimensional subtorus with $\ML\le 5/16$ either has covolume less than $33$ or lies in one of two explicit three-dimensional subtori with a small parameter. Both cases are enumerated exactly. The result is a list of thirty-one subtori (Proposition~\ref{prop:list}). Two of them are the subtori $\Uone$ and $\Utwo$ of Jain and Kravitz with $\ML=1/4$, which carry the two infinite families, and the remaining twenty-nine have $\ML\ge 2/7$ and cannot contain a circle of length $\ge 288$ with loneliness below $1/4$. An exact exhaustive search over all primitive quadruples with $\eucl v<512$ completes the classification. The formulas for the two infinite families are proved by an elementary analysis of the superlevel sets of $\ML$ on $\Uone$ and $\Utwo$, in the spirit of \cite{JainKravitz}.

All computations use exact integer or rational arithmetic. The programs, the certificate data and a Python script using only the standard library accompany the paper, see Section~\ref{sec:comp}. The script recomputes the smaller computations independently and checks the certificates and output files of the large enumerations.

\section{Preliminaries}
Write $\T=\R/\Z$ and $\T^4=\R^4/\Z^4$. For $x\in\T^4$ put
\[
 G(x)=\min_{1\le i\le 4}\norm{x_i}.
\]
For $p,q\in\R^4$ we use the maximum norm $\supn{p}=\max_i|p_i|$ and the Euclidean norm $\eucl p$. Since $\norm{\cdot}$ is $1$-Lipschitz on $\R$,
\begin{equation}\label{eq:lip}
 |G(x+p)-G(x)|\le \supn{p}\qquad (x\in\T^4,\ p\in\R^4).
\end{equation}

A \emph{subtorus} of $\T^4$ is a closed connected subgroup, that is, the image $S=E/(E\cap\Z^4)$ of a rational linear subspace $E\subseteq\R^4$. We write $\Lambda_S=E\cap\Z^4$, a primitive sublattice of $\Z^4$ of rank $k=\dim E$, and $\covol(S)=\covol(\Lambda_S)$ for the $k$-dimensional volume of a fundamental parallelepiped of $\Lambda_S$ in $E$. Put
\[
 \ML(S)=\max_{x\in S}G(x).
\]
Call $S$ \emph{coordinate-free} if it is contained in no coordinate hyperplane $\{x_i=0\}$, equivalently if no coordinate function vanishes identically on $\Lambda_S$. All subtori that contain a coordinate-free subtorus are coordinate-free.

For $v\in\Z^4$ primitive with all $v_i\ne 0$ let $T_v=\{tv:t\in\R\}\subseteq\T^4$ be its circle. Then $\Lambda_{T_v}=\Z v$, $\covol(T_v)=\eucl v$, and
\[
 \ML(T_v)=\max_t\min_i\norm{v_it}=\ML(|v_1|,|v_2|,|v_3|,|v_4|).
\]
The group of signed coordinate permutations acts on $\T^4$ and preserves $G$, hence preserves $\ML$ of subtori, and it maps subtori to subtori. ``Up to symmetry'' always refers to this group of order $384$.

\begin{lemma}[Corner lemma]\label{lem:corner}
Let $V$ be a finite set of positive integers. Then $\ML(V)=\min_{v\in V}\norm{vt}$ for some $t=m/(u+w)$ with $u,w\in V$ and $m\in\Z$.
\end{lemma}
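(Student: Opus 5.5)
The function $f(t)=\min_{v\in V}\norm{vt}$ is continuous and periodic with period $1$, so it attains its maximum. I will look at a maximizer $t_0$ and move it to a point of the stated form without lowering $f$.

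Each $\norm{vt}$ is piecewise linear in $t$ with slope $\pm v$. Its breakpoints are the points where $vt\in\frac12\Z$. So $f$ is piecewise linear, and on each linear piece $f$ agrees with some $\pm(vt-k)$ of nonzero slope $\pm v$. A local maximum of such a function cannot occur in the interior of a linear piece. It must occur at a point where the active function switches, and there are two ways this can happen.

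\emph{Type 1.} A single runner is active and its own $\norm{vt}$ peaks, so $vt_0\in\frac12+\Z$. Write $vt_0=m'+\frac12$, so $t_0=(2m'+1)/(2v)$. This has the form $m/(u+w)$ with $u=w=v\in V$ and $m=2m'+1$.

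\emph{Type 2.} Two active pieces cross with slopes of opposite sign. One piece is increasing, $\norm{ut}=ut-a$ near $t_0$ with $u\in V$. The other is decreasing, $\norm{wt}=b-wt$ near $t_0$ with $w\in V$. At $t_0$ they are equal, $ut_0-a=b-wt_0$, so $t_0=(a+b)/(u+w)$ with $a,b\in\Z$. This is again of the stated form.

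It remains to justify that a local maximum must be of one of these two types. Suppose the minimum at $t_0$ is attained by several runners. If all the active one-sided slopes on the right of $t_0$ were positive, $f$ would increase to the right. If all the active one-sided slopes on the left were negative, $f$ would increase to the left. So at a maximum there is an active runner whose right derivative is $\le 0$ and an active runner whose left derivative is $\ge 0$.

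If these are the same runner, its slope changes sign at $t_0$. Since slopes are $\pm v\ne0$, this forces a peak of that runner, which is Type 1. (A valley of $\norm{vt}$ at $vt\in\Z$ would give $f=0$, which cannot be the maximum.) If they are different runners $u$ (increasing from the left) and $w$ (decreasing to the right), then both equal the common value at $t_0$. Each is linear on a neighborhood of $t_0$, or else is at its own peak, which is Type 1. Extending these linear branches through $t_0$ gives the Type 2 equation.

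The only delicate point is this last case analysis with multiple active runners and one-sided derivatives. Everything else is immediate.
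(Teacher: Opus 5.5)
Your proof is correct and follows the paper's argument. A maximizer with $f(t_0)>0$ is either a downward cusp of a single wave, giving $2vt_0\in\Z$, or the point where active pieces of slopes $u>0$ and $-w<0$ meet, giving $(u+w)t_0\in\Z$; your one-sided derivative analysis spells out why one of these must occur, which the paper leaves implicit, and, like the paper, you only need the trivial fact that $f(t)>0$ for small $t>0$ to rule out $f\equiv0$.
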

\begin{proof}
The function $f(t)=\min_{v\in V}\norm{vt}$ is continuous and piecewise linear, with pieces of slopes $\pm v$. At a maximum point $t_0$ with $f(t_0)>0$, either some wave $\norm{vt}$ has a downward cusp at $t_0$ and $f(t_0)=\norm{vt_0}=1/2$, so $2vt_0\in\Z$, or two active pieces of slopes $u>0$ and $-w<0$ meet at $t_0$, so $ut_0-a=b-wt_0$ for integers $a,b$ and $(u+w)t_0\in\Z$. The case $f\equiv 0$ is impossible for finitely many positive speeds.
\end{proof}
This is \cite[Proposition~4.1]{Kravitz}. It makes $\ML(V)$ a finite exact computation, and it shows that $\ML(V)$ is rational.

\begin{lemma}[Orbit lines]\label{lem:orbit}
Let $x,y$ be coprime integers. A point $(X,Y)\in\T^2$ lies on the circle $\{(xt,yt):t\in\R\}$ if and only if $yX-xY\in\Z$.
\end{lemma}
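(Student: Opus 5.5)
The statement to prove is the Orbit lines lemma: for coprime integers $x,y$, a point $(X,Y)\in\T^2$ lies on the circle $C=\{(xt,yt):t\in\R\}$ if and only if $yX-xY\in\Z$. The plan is to show directly that $C$ is the kernel of the character $\chi(X,Y)=yX-xY \bmod 1$ on $\T^2$, proving the two inclusions separately.

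\emph{Necessity.} This direction is immediate. Suppose $(X,Y)=(xt+a,\,yt+b)$ with $a,b\in\Z$. Then
\[
yX-xY = y(xt+a)-x(yt+b) = ya-xb\in\Z .
\]

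\emph{Sufficiency.} Suppose $yX-xY=k\in\Z$, where $X,Y$ are real representatives of the point. Since $\gcd(x,y)=1$, Bézout gives integers $a,b$ with
\[
ya-xb=k .
\]
Put $X'=X-a$ and $Y'=Y-b$; these represent the same point of $\T^2$. They satisfy
\[
yX'-xY' = k-(ya-xb) = 0 .
\]
So $(X',Y')$ is a real multiple of $(x,y)$. This needs $(x,y)\ne(0,0)$, which holds because $x,y$ are coprime. Concretely, take $t=(xX'+yY')/(x^2+y^2)$. Then $X'=xt$ and $Y'=yt$, using the relation $yX'=xY'$, and hence $(X,Y)\in C$.

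\emph{Main obstacle.} There is essentially no obstacle. The one point needing care is that coprimality is used exactly once, in the Bézout step. Without it, the kernel of $\chi$ would have $\gcd(x,y)$ components rather than being the single circle $C$. The degenerate cases $x=0$ or $y=0$ force the other coordinate to be $\pm1$, and the same argument covers them.
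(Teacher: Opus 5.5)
Your proof is correct and is essentially the paper's argument: the paper also uses coprimality exactly once, solving $yh\equiv -k \pmod x$ (a one-variable form of your B\'ezout step) and then setting $t=(X+h)/x$ directly. Your choice $t=(xX'+yY')/(x^2+y^2)$ has the small advantage of not dividing by $x$, so the case $x=0$ is covered without a separate comment.
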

\begin{proof}
Necessity is clear. If $yX-xY=k\in\Z$, choose $h\in\Z$ with $yh\equiv -k\pmod x$ and put $t=(X+h)/x$. Then $xt\equiv X$ and $yt=(yX+yh)/x=(k+xY+yh)/x\equiv Y\pmod 1$.
\end{proof}

\section{The density lemma}
\begin{lemma}[Density]\label{lem:density}
Let $S$ be a $k$-dimensional subtorus of $\T^4$, $1\le k\le 3$, with $A=\covol(S)$, and let $E=\R\Lambda_S$. Let $\pi:\R^4\to E^\perp$ be the orthogonal projection. Then
\begin{enumerate}[label=\textup{(\alph*)},leftmargin=*]
\item $\pi(\Z^4)$ is a lattice of rank $4-k$ in $E^\perp$ with $\covol(\pi(\Z^4))=1/A$.
\item there is $w\in\Z^4\setminus\Lambda_S$ with $\supn{\pi(w)}\le A^{-1/(4-k)}$.
\item for any $w\in\Z^4\setminus\Lambda_S$, let $S'$ be the $(k+1)$-dimensional subtorus with $\R\Lambda_{S'}=E+\R w$. Every point of $S'$ lies within maximum norm distance $\supn{\pi(w)}/2$ of a point of $S$. Consequently
\[
 \ML(S)\ \ge\ \ML(S')-\tfrac12\supn{\pi(w)}.
\]
\end{enumerate}
In particular $\ML(S)\ge \ML(S')-\frac12A^{-1/(4-k)}$ for a suitable $S'\supseteq S$.
\end{lemma}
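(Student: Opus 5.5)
We prove the three parts in order; (b) is the only step that needs a nontrivial outside input.

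\emph{Part (a).} Since $\Lambda_S=E\cap\Z^4$ is primitive, a basis $e_1,\dots,e_k$ of $\Lambda_S$ extends to a basis $e_1,\dots,e_4$ of $\Z^4$. The kernel of $\pi$ on $\Z^4$ is $E\cap\Z^4=\Lambda_S$. Hence $\pi(\Z^4)$ is generated by $\pi(e_{k+1}),\dots,\pi(e_4)$. These vectors span $E^\perp$, because $e_1,\dots,e_4$ span $\R^4$. Being $4-k$ spanning vectors in a $(4-k)$-dimensional space, they are linearly independent, so $\pi(\Z^4)$ is a lattice of rank $4-k$. For the covolume, write the $4$-volume of the parallelepiped on $e_1,\dots,e_4$ as the $k$-volume of the parallelepiped on $e_1,\dots,e_k$ times the $(4-k)$-volume of the parallelepiped on the components of $e_{k+1},\dots,e_4$ orthogonal to $E$ (base times height). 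This gives
\[
1=\covol(\Z^4)=A\cdot\covol(\pi(\Z^4)),
\]
which is (a).

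\emph{Part (b).} Apply Minkowski's convex body theorem in $E^\perp$ to the lattice $\pi(\Z^4)$ and the compact, convex, symmetric body $C_r=E^\perp\cap[-r,r]^4$, where the sup norm is taken in the ambient coordinates of $\R^4$. By Vaaler's theorem, every central section of the cube $[-\frac12,\frac12]^4$ by a $(4-k)$-dimensional subspace has $(4-k)$-volume at least $1$. Scaling gives $\operatorname{vol}(C_r)\ge(2r)^{4-k}$. Minkowski's theorem in its compact form (equality allowed) requires
\[
\operatorname{vol}(C_r)\ge 2^{4-k}\covol(\pi(\Z^4))=2^{4-k}/A,
\]
which holds for $r=A^{-1/(4-k)}$. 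So $C_r$ contains a nonzero lattice point $\pi(w)$ with $w\in\Z^4$. Since $\pi(w)\neq0$, we have $w\notin\Lambda_S$, and (b) follows. Getting the constant right here, that is, normalising Vaaler's bound against the sup-norm cube and using the compact form of Minkowski, is the only delicate point.

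\emph{Part (c).} The subspace $E+\R w$ is rational, so its image in $\T^4$ is the closed subtorus $S'$ with $\R\Lambda_{S'}=E+\R w$. A point of $S'$ is the image of $e+sw$ with $e\in E$ and $s\in\R$. Since $w\in\Z^4$, we may replace $s$ by $s-j$ for an integer $j$ and assume $|s|\le\frac12$. Now decompose
\[
e+sw=\bigl(e+s(w-\pi(w))\bigr)+s\,\pi(w),
\]
where the first summand lies in $E$ because $w-\pi(w)\in E$. Its image lies in $S$ and is within sup distance $|s|\,\supn{\pi(w)}\le\frac12\supn{\pi(w)}$ of the given point. Take a maximiser of $G$ on $S'$, and let the nearby point of $S$ be as just constructed. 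The Lipschitz bound \eqref{eq:lip} then gives $\ML(S)\ge\ML(S')-\frac12\supn{\pi(w)}$. Combining this with (b) yields the final statement $\ML(S)\ge\ML(S')-\frac12A^{-1/(4-k)}$ for a suitable $S'\supseteq S$.
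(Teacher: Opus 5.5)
Your proof is correct and follows the paper's argument essentially step for step. Part (a) uses basis extension and the base-times-height factorization of the volume, part (b) uses Vaaler's section bound with the compact form of Minkowski's theorem, and part (c) rounds the coefficient of $w$ to the nearest integer and then applies \eqref{eq:lip}; the only cosmetic difference is that you parametrize $S'$ by $e+sw$ rather than $s+b\,\pi(w)$, which yields the same coefficient because $w-\pi(w)\in E$.
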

\begin{proof}
(a) Since $\Lambda_S$ is primitive, a basis $b_1,\dots,b_k$ of $\Lambda_S$ extends to a basis $b_1,\dots,b_4$ of $\Z^4$. The kernel of $\pi$ on $\Z^4$ is $\Z^4\cap E=\Lambda_S$, so $\pi(\Z^4)\cong\Z^4/\Lambda_S\cong\Z^{4-k}$ is a lattice spanned by $\pi(b_{k+1}),\dots,\pi(b_4)$. The volume of the parallelepiped spanned by $b_1,\dots,b_4$ is $1$ and it factors as the $k$-volume of the parallelepiped spanned by $b_1,\dots,b_k$ in $E$ times the $(4-k)$-volume of the one spanned by $\pi(b_{k+1}),\dots,\pi(b_4)$ in $E^\perp$. Hence $1=A\cdot\covol(\pi(\Z^4))$.

(b) Let $\rho>0$ and $K_\rho=\{z\in E^\perp:\supn{z}\le\rho\}$. This is the central section of the cube $[-\rho,\rho]^4$ by the $(4-k)$-dimensional subspace $E^\perp$, a compact, convex, centrally symmetric set. By Vaaler's theorem \cite{Vaaler}, every central section of the unit cube $[-\frac12,\frac12]^4$ by a subspace of dimension $d$ has $d$-dimensional volume at least $1$. Scaling by $2\rho$ gives $\operatorname{vol}_{4-k}(K_\rho)\ge(2\rho)^{4-k}$. Minkowski's convex body theorem (in the form valid for compact bodies, see \cite[Chapter~III]{Cassels}) states that a compact centrally symmetric convex set of volume at least $2^{4-k}\covol(\pi(\Z^4))=2^{4-k}/A$ contains a nonzero point of $\pi(\Z^4)$. For $\rho=A^{-1/(4-k)}$ we have $(2\rho)^{4-k}=2^{4-k}/A$, so there is $w\in\Z^4$ with $\pi(w)\ne 0$, that is $w\notin E$, and $\supn{\pi(w)}\le\rho$.

(c) Write $u=\pi(w)$ and $w=u+e$ with $e\in E$. Let $y\in E+\R w$ represent a point of $S'$, say $y=s+bu$ with $s\in E$ and $b\in\R$. Choose $j\in\Z$ with $|b-j|\le 1/2$. Then
\[
 y-jw=(s-je)+(b-j)u,
\]
and $y-jw\equiv y\pmod{\Z^4}$. The point $s-je\in E$ represents a point of $S$, and $(b-j)u$ has maximum norm at most $\supn u/2$. By \eqref{eq:lip}, $G(y)\le G(s-je)+\supn u/2\le\ML(S)+\supn u/2$. Taking $y$ to be a maximizer of $G$ on $S'$ gives the inequality.
\end{proof}

\begin{remark}
For $k=3$ Vaaler's theorem is not needed: $E^\perp=\R m$ for a primitive $m\in\Z^4$, $\pi(\Z^4)=\Z\,m/\eucl m^2$, and the vector $w$ with $m\cdot w=1$ has $\supn{\pi(w)}=\supn m/\eucl m^2$. Note also $\covol(\Lambda_S)=\eucl m$ in this case.
\end{remark}

\section{Three-dimensional subtori}
A coordinate-free three-dimensional subtorus is $W_m=\{x\in\T^4:m\cdot x\in\Z\}$ for a primitive $m\in\Z^4$ with at least two nonzero entries, and $\covol(W_m)=\eucl m$. Up to symmetry we may take the entries of $m$ nonnegative after permuting, but we keep signs since they matter for the explicit coordinates.

\begin{lemma}\label{lem:W13}
Every coordinate-free three-dimensional subtorus $W$ satisfies $\ML(W)\ge 1/3$.
\end{lemma}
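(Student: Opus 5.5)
The plan is to construct, for every primitive $m\in\Z^4$ with at least two nonzero entries, an explicit point $x\in\T^4$ with $m\cdot x\in\Z$ and $\norm{x_i}\ge 1/3$ for all $i$. Since $W_m=\{x:m\cdot x\in\Z\}$, such a point gives $\ML(W_m)\ge G(x)\ge 1/3$. The natural candidates are points whose coordinates lie in $\{\pm 1/3\}$, with $1/2$ held in reserve. Coordinates with $m_i=0$ impose nothing, and I would set them to $1/2$ throughout.

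\textbf{Main case.} Take $x_i=\varepsilon_i/3$ with $\varepsilon_i\in\{\pm1\}$. The condition $m\cdot x\in\Z$ becomes
\[
 \sum_i\varepsilon_i m_i\equiv 0\pmod 3 .
\]
Entries with $3\mid m_i$ contribute nothing. Each entry with $3\nmid m_i$ contributes a residue that can be chosen freely in $\{1,2\}$ by the choice of sign. Let $c$ be the number of entries with $3\nmid m_i$. The congruence can then be solved as follows:
\begin{itemize}
\item $c=0$: trivially solvable.
\item $c=2$: choose residues $1+2$.
\item $c=3$: choose residues $1+1+1$.
\item $c=4$: choose residues $1+1+2+2$.
\end{itemize}
So only $c=1$ remains.

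\textbf{The case $c=1$.} Say $3\nmid m_1$ and $3\mid m_j$ for $j\ne1$. Because $m$ has at least two nonzero entries, some $m_{j_0}$ with $j_0\ne1$ is nonzero, and hence $|m_{j_0}|\ge3$.

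\emph{Subcase $|m_1|\ge2$.} Put $x_j=1/3$ for the other nonzero entries, so that their total contribution is an integer. Then choose $x_1=k/|m_1|$ with $k=\lfloor |m_1|/2\rfloor$. This gives $m_1x_1\in\Z$ and $\norm{x_1}\ge 2/5\ge1/3$; the worst case is $|m_1|=5$, and the case $|m_1|=3$ cannot occur.

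\emph{Subcase $|m_1|=1$.} Now $x_1\equiv -m_1\sum_{j\ne1}m_jx_j$ is forced. Set $x_j=1/2$ for every $j\ne 1,j_0$ and let $x_{j_0}$ range over $[1/3,2/3]$. The quantity $\sum_{j\ne1}m_jx_j$ is a constant plus $m_{j_0}x_{j_0}$. As $x_{j_0}$ varies, this sweeps an interval of length $|m_{j_0}|/3\ge1$, so it covers all of $\T$. We can therefore choose $x_{j_0}$ so that the sum is $\equiv1/2$, which gives $\norm{x_1}=1/2$.

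In all cases every coordinate of $x$ has $\norm{x_i}\ge 1/3$, which proves the lemma. The only real obstacle is the $c=1$ case, where the pure $\pm1/3$ ansatz fails. The continuity (sweeping) argument in the coordinate $x_{j_0}$ handles it, and it works precisely because $|m_{j_0}|\ge 3$.
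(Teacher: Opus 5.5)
Your proof is correct, but it takes a different route from the paper's. The paper does no residue casework. It notes that as $x$ ranges over the cube $[\frac13,\frac23]^4$, the value $m\cdot x$ sweeps an interval of length $\frac13\sum_i|m_i|$. That interval contains an integer as soon as $\sum_i|m_i|\ge3$. The only characters left are $m=\pm e_i\pm e_j$, and $x_i=x_j=\frac12$ handles them.

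Your version splits by $c$, the number of entries with $3\nmid m_i$. You use sign choices $\pm\frac13$ when $c\ne1$. For $c=1$ you fall back to $x_1=\lfloor|m_1|/2\rfloor/|m_1|$, or to a continuity sweep in one coordinate when $|m_1|=1$. All of these steps check out. But the sweep you reserve for the last subcase is the same continuity idea that, applied to all coordinates at once, makes the distinctions on $c$ and on $|m_1|$ unnecessary.

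Your route does give explicit finite-order witnesses in most cases, with coordinates in $\{\pm\frac13,\frac12\}$ or of the form $k/|m_1|$, which are convenient as certificates. The paper's route buys brevity and a reusable template: Lemma~\ref{lem:Wodd} runs the same interval argument on the cube $[c,1-c]^4$ to get the exact value of $\ML(W_m)$ in terms of $\sum_i|m_i|$. Your mod-$3$ bookkeeping does not generalize that way.
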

\begin{proof}
Let $W=W_m$. We find $x\in[\frac13,\frac23]^4$ with $m\cdot x\in\Z$. The set $\{m\cdot x: x\in[\frac13,\frac23]^4\}$ is an interval of length $\frac13\sum_i|m_i|$. If $\sum_i|m_i|\ge 3$ the interval has length at least $1$ and contains an integer. Otherwise $m=\pm e_i\pm e_j$ for some $i\ne j$, and $x_i=x_j=\frac12$ gives $m\cdot x\in\{0,\pm1\}$.
\end{proof}

The two subtori
\begin{gather*}
 \Wone=\{x_1=2x_2\}=\{(2y,y,z,u)\},\\ \Wtwo=\{x_3=x_1+x_2\}=\{(a,b,a+b,u)\}
\end{gather*}
have $\ML=1/3$ exactly: the bound $\ML(\Wone)\le\ML(1,2)=1/3$ is clear, and $\min(\norm a,\norm b,\norm{a+b})\le 1/3$ for all $a,b$ since $\norm a,\norm b>1/3$ forces $a,b\in(\frac13,\frac23)+\Z$ and then $\norm{a+b}\le\frac13$.

\begin{lemma}\label{lem:W25}
Let $W$ be a coordinate-free three-dimensional subtorus with $\ML(W)<2/5$. Then $W$ is, up to symmetry, $\Wone$ or $\Wtwo$.
\end{lemma}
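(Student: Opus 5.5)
We prove the lemma by showing directly that every coordinate-free $W_m$ other than $\Wone$, $\Wtwo$ (up to symmetry) contains a point of $[\frac25,\frac35]^4+\Z^4$, which gives $\ML(W_m)\ge 2/5$. This is the same interval argument as in Lemma~\ref{lem:W13}, carried out with the window $[\frac25,\frac35]$. Using the symmetry group we first arrange $m_i\ge0$ and $m_1\ge m_2\ge m_3\ge m_4\ge0$. Put $\sigma=\sum_i m_i$. As $x$ ranges over $[\frac25,\frac35]^4$, the value $m\cdot x$ fills the interval $[\frac25\sigma,\frac35\sigma]$, which has length $\sigma/5$. If $\sigma\ge5$ this interval contains an integer, and then $\ML(W_m)\ge 2/5$. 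So only primitive $m$ with $\sigma\le4$ and at least two nonzero entries remain, and up to symmetry these form a finite list.

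For $\sigma\le4$ the interval $[\frac25\sigma,\frac35\sigma]$ contains an integer exactly when some integer $j$ satisfies $2\sigma\le 5j\le3\sigma$. For $\sigma=2$ this fails, since $[0.8,1.2]$ contains $1$; so $\sigma=2$ is not an exception after all. For $\sigma=3$ the interval is $[1.2,1.8]$ and contains no integer. For $\sigma=4$ it is $[1.6,2.4]$, which contains $2$. Hence the window argument leaves only the case $\sigma=3$. It also uses only nonnegative $m$, but that suffices: the sign changes $x_i\mapsto -x_i$ preserve $G$ and reduce every $m$ to a nonnegative one.

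The nonnegative primitive $m$ with $\sigma=3$ and at least two nonzero entries are, up to permutation, $(2,1,0,0)$, $(1,1,1,0)$, $(2,1,\cdot,\cdot)$ variants, and $(1,1,1,0)$ (note that $(3,0,0,0)$ is excluded). So the exceptional subtori are $W_{(2,1,0,0)}$ and $W_{(1,1,1,0)}$ up to signed permutations. Signs matter for identification, so we check what these are. $W_{(1,-2,0,0)}=\{x_1=2x_2\}$ is $\Wone$, and $W_{(2,1,0,0)}$ is its image under a sign change. Likewise $W_{(1,1,-1,0)}$ is $\Wtwo$, and $(1,1,1,0)$ is equivalent to it. Both are already known to have $\ML=1/3<2/5$, so the list is exactly $\{\Wone,\Wtwo\}$.

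The main point needing care is to make sure the reduction to nonnegative $m$ is legitimate. The sign flips must act on $W_m$ compatibly, sending $W_m$ to $W_{m'}$ where $m'$ has the corresponding signs flipped. This holds because $\norm{-x}=\norm{x}$. The other point is the arithmetic check that $\sigma=2$ and $\sigma=4$ are covered. Everything else is routine.
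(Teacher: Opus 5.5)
Your proof is correct, and it takes a different and more elementary route than the paper.

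\textbf{The paper's route.} The paper applies Lemma~\ref{lem:density}(c) with $S'=\T^4$ and the vector $w$ satisfying $m\cdot w=1$. This gives $\ML(W_m)\ge\frac12-\supn m/(2\eucl m^2)$, which forces $\eucl m^2<5\supn m$ and hence $\eucl m^2\le19$. The paper then dispatches the $19$ non-special classes among the resulting $21$ one at a time, using the explicit points of Table~\ref{tab:chars}.

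\textbf{Your route.} You instead run the interval argument of Lemma~\ref{lem:W13} with the window $[\frac25,\frac35]$. After the sign normalization, $\{m\cdot x:x\in[\frac25,\frac35]^4\}$ is the interval $[\frac25|m|_1,\frac35|m|_1]$ of length $|m|_1/5$. It contains an integer unless $|m|_1=3$. For primitive $m$ with at least two nonzero entries, $|m|_1=3$ leaves only the classes of $(2,-1,0,0)$ and $(1,1,-1,0)$, that is $\Wone$ and $\Wtwo$.

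\textbf{Comparison.} Your argument needs no enumeration and no certificate table. It is exactly the lower-bound half of Lemma~\ref{lem:Wodd}, which the paper proves later: the point $x_i=\operatorname{sgn}(m_i)t$ gives $\ML(W_m)\ge\frac25$ whenever $|m|_1\ne3$. So Lemma~\ref{lem:Wodd} already implies the present lemma at once. The paper's route buys only uniformity with the density method it uses throughout. For this lemma your $\ell^1$ criterion is both simpler and more informative.

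\textbf{Cosmetic fixes.}
\begin{itemize}
\item Your list of vectors with $\sigma=3$ is garbled. It should read simply $(2,1,0,0)$ and $(1,1,1,0)$ up to permutation.
\item ``For $\sigma=2$ this fails'' should say that the interval \emph{does} contain an integer.
\item The lemma does not need $\ML(\Wone)=\ML(\Wtwo)=\frac13$, only that every other class has $\ML\ge\frac25$.
\end{itemize}
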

\begin{proof}
Let $W=W_m$. By Lemma~\ref{lem:density}(c) with $S=W$, $S'=\T^4$ and the vector $w$ with $m\cdot w=1$ (see the remark above), and since $G(\frac12,\frac12,\frac12,\frac12)=\frac12$,
\[
 \ML(W)\ \ge\ \frac12-\frac{\supn m}{2\eucl m^2}.
\]
If $\ML(W)<2/5$ then $\eucl m^2<5\supn m$. As $\eucl m^2\ge\supn m^2$ this forces $\supn m\le 4$ and then $\eucl m^2\le 19$. There are exactly $21$ classes of primitive $m\in\Z^4$ with at least two nonzero entries and $\eucl m^2\le 19$ up to symmetry. For $19$ of them Table~\ref{tab:chars} exhibits a point $x\in W_m$ with $G(x)\ge 2/5$. The remaining two classes are $m=(0,0,1,-2)$ and $m=(0,1,-1,-1)$, whose subtori are $\Wone$ and $\Wtwo$ up to symmetry.
\end{proof}

\section{Two-dimensional subtori with loneliness at most $5/16$}\label{sec:two}
\subsection{Reduction to a finite list}
We identify $\Wone$ with $\T^3$ through $(y,z,u)\mapsto(2y,y,z,u)$, and $\Wtwo$ with $\T^3$ through $(a,b,u)\mapsto(a,b,a+b,u)$. The two-dimensional subtori of $\T^3$ are the kernels $U_n=\{p\in\T^3:n\cdot p\in\Z\}$ of primitive $n\in\Z^3$.

\begin{lemma}\label{lem:reduce}
Let $U$ be a coordinate-free two-dimensional subtorus of $\T^4$ with $\ML(U)\le 5/16$. Then at least one of the following holds.
\begin{enumerate}[label=\textup{(\roman*)},leftmargin=*]
\item $\covol(U)<33$.
\item Up to symmetry, $U\subseteq\Wone$ or $U\subseteq\Wtwo$, and in the coordinates of $\T^3$ above, $U=U_n$ with $\eucl n\le 48$.
\end{enumerate}
\end{lemma}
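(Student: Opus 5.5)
We apply the density lemma (Lemma~\ref{lem:density}) twice: once to climb from $U$ to a three-dimensional subtorus, and once more, in the coordinates of $\T^3$, from $U_n$ to all of $\T^3$.

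\emph{Step 1.} Suppose $\covol(U)=A\ge 33$; otherwise (i) holds. Lemma~\ref{lem:density}(b),(c) with $k=2$ gives a three-dimensional subtorus $W\supseteq U$ with
\[
 \ML(U)\ge\ML(W)-\tfrac12A^{-1/2}.
\]
Since $U$ is coordinate-free, so is $W$. With $A\ge 33$ we have $\frac12A^{-1/2}<\frac1{11.4}<0.088$. Hence
\[
 \ML(W)\le\tfrac5{16}+0.088<0.401.
\]
This is slightly above $2/5$, so the threshold needs care. I would check whether the constant $33$ is meant to give exactly $\ML(W)<2/5$: we need $\frac12A^{-1/2}<\frac25-\frac5{16}=\frac7{80}$, that is $A>(40/7)^2\approx32.65$, and $A\ge33$ suffices. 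Then Lemma~\ref{lem:W25} shows that, up to symmetry, $W=\Wone$ or $W=\Wtwo$. So $U\subseteq\Wone$ or $U\subseteq\Wtwo$, and in the coordinates of $\T^3$ we have $U=U_n$ for a primitive $n\in\Z^3$.

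\emph{Step 2.} This step bounds $\eucl n$ by working in $\T^3$ rather than $\T^4$. The identification map is linear but not an isometry, and $G$ pulls back to
\[
 G_1(y,z,u)=\min(\norm{2y},\norm y,\norm z,\norm u)\quad\text{or}\quad G_2(a,b,u)=\min(\norm a,\norm b,\norm{a+b},\norm u).
\]
Both are Lipschitz in the maximum norm on $\T^3$: $G_1$ with constant $2$, since $\norm{2y}$ moves by at most $2|\delta|$, and $G_2$ with constant $2$, from $\norm{a+b}$. The analogue of Lemma~\ref{lem:density}(c) for $S=U_n$ and $S'=\T^3$ uses the integer vector $w$ with $n\cdot w=1$, whose projection has maximum norm $\supn n/\eucl n^2$. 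It gives
\[
 \ML(U_n)\ge\ML(W)-2\cdot\tfrac12\cdot\frac{\supn n}{\eucl n^2}=\tfrac13-\frac{\supn n}{\eucl n^2},
\]
using $\ML(\Wone)=\ML(\Wtwo)=1/3$. From $\ML\le 5/16$ we get
\[
 \frac{\supn n}{\eucl n^2}\ge\tfrac1{48},\qquad\text{so}\qquad \eucl n^2\le48\supn n\le48\eucl n,
\]
hence $\eucl n\le48$, which is exactly the bound in (ii). This confirms the intended route.

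One detail in Step 2 needs checking: the Lipschitz constant must be measured along the actual displacement direction $(b-j)u$ with $|b-j|\le\frac12$. Mapping this displacement into $\T^4$ multiplies the maximum norm by at most $2$, via the coordinate $2y$ or $a+b$, so the bound $\supn{\cdot}\le 2\cdot\frac12\supn u$ holds. I expect the main obstacle to be this bookkeeping of constants, which must come out to exactly $33$ and $48$. Step 1 also quietly needs the coordinate-free hypothesis of Lemma~\ref{lem:W25}, which holds because $W$ contains the coordinate-free $U$.
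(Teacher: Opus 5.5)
Your proposal is correct and follows the paper's proof. The density lemma lifts $U$ to a three-dimensional $W$ with $\ML(W)<\frac25$ once $\covol(U)\ge33>(40/7)^2$; the exact comparison you settle on is what is needed, since the decimal estimate $0.401$ is too loose. Lemma~\ref{lem:W25} then identifies $W$ as $\Wone$ or $\Wtwo$, and the same projection argument in $\T^3$, with Lipschitz factor $2$ from $\norm{2y}$ or $\norm{a+b}$, gives $\frac13-\supn n/\eucl n^2\le\frac5{16}$ and hence $\eucl n\le48$. The paper differs only in phrasing: it uses the weighted norms $N_1,N_2\le2\supn{\cdot}$, and the explicit points $(\frac13,\frac12,\frac12)$, $(\frac13,\frac13,\frac12)$ in place of $\ML(\Wone)=\ML(\Wtwo)=\frac13$.
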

\begin{proof}
Let $A=\covol(U)$. Lemma~\ref{lem:density} gives a three-dimensional subtorus $W\supseteq U$ with $\ML(U)\ge\ML(W)-\frac12A^{-1/2}$, and $W$ is coordinate-free. If $\ML(W)\ge 2/5$, then $\frac12A^{-1/2}\ge\frac25-\frac5{16}=\frac7{80}$, so $A\le(40/7)^2<33$, which is (i). Otherwise Lemma~\ref{lem:W25} applies, and after a symmetry $W=\Wone$ or $W=\Wtwo$.

Suppose $W=\Wone$ and let $U=U_n$ in the coordinates $(y,z,u)$, with $n\in\Z^3$ primitive. The function $G$ becomes $G_1(y,z,u)=\min(\norm{2y},\norm y,\norm z,\norm u)$, which satisfies $|G_1(p+\delta)-G_1(p)|\le N_1(\delta)$ with $N_1(\delta)=\max(2|\delta_1|,|\delta_2|,|\delta_3|)$. Choose $w\in\Z^3$ with $n\cdot w=1$. The orthogonal projection of $w$ onto $\R n$ is $n/\eucl n^2$, and the argument of Lemma~\ref{lem:density}(c), carried out in $\T^3$ with the norm $N_1$, shows that every point of $\T^3$ is within $N_1$-distance $N_1(n)/(2\eucl n^2)$ of $U_n$. The point $P=(\frac13,\frac12,\frac12)$ has $G_1(P)=\frac13$, so
\[
 \ML(U_n)\ \ge\ \frac13-\frac{N_1(n)}{2\eucl n^2}\ \ge\ \frac13-\frac{\supn n}{\eucl n^2}\ \ge\ \frac13-\frac1{\eucl n}.
\]
If $\ML(U_n)\le 5/16$ this gives $\eucl n\le 48$. For $W=\Wtwo$ use $G_2(a,b,u)=\min(\norm a,\norm b,\norm{a+b},\norm u)$, the norm $N_2(\delta)=\max(|\delta_1|,|\delta_2|,|\delta_1+\delta_2|,|\delta_3|)\le 2\supn\delta$, and the point $(\frac13,\frac13,\frac12)$ with $G_2=\frac13$. The same computation gives $\eucl n\le 48$.
\end{proof}

\subsection{The list}
We now enumerate both cases of Lemma~\ref{lem:reduce} exactly.

For (i), every primitive rank-two sublattice of $\Z^4$ has a unique basis in row Hermite normal form: rows $s,r$ with pivot columns $p<q$, $s_p>0$, $r_q>0$, $s_j=0$ for $j<p$, $r_j=0$ for $j<q$, and $0\le s_q<r_q$. The covolume is $\bigl(\sum_{i<j}(s_ir_j-s_jr_i)^2\bigr)^{1/2}$, primitivity means that the $2\times2$ minors have greatest common divisor $1$, and bounds on all entries follow from the bound on the minors. The enumeration produces $7{,}011{,}052$ primitive coordinate-free sublattices with covolume at most $33$. As an independent check of completeness, the same program counts $2{,}380$ and $51{,}820$ lattices of covolume at most $5$ and $10$, which agree with a direct count of primitive integer solutions of the Pl\"ucker relation $p_{12}p_{34}-p_{13}p_{24}+p_{14}p_{23}=0$ of the same norms. For (ii), the primitive $n\in\Z^3$ with $\eucl n\le 48$ up to sign are listed ($192{,}517$ vectors, of which $192{,}514$ and $192{,}513$ give coordinate-free subtori of $\Wone$ and $\Wtwo$ respectively), a basis of $\{c\in\Z^3:n\cdot c=0\}$ is computed by integer row reduction, and the covolume of the image in $\Z^4$ is verified against the closed formulas $n_1^2+5n_2^2+5n_3^2$ and $2n_1^2-2n_1n_2+2n_2^2+3n_3^2$.

A subtorus $U$ is discarded when some point of a grid $\{(i/Q,j/Q)\}$ in its coordinates has $G>5/16$. The grids $Q\in\{12,60,420,16,80,208\}$ discard all but $5{,}004$ bases, which represent $123$ subtori up to symmetry. For each of these an exact maximization (Section~\ref{sec:comp}) finds a maximizer, and that point is a certificate: $92$ of the $123$ have a point with $G>5/16$ and are discarded, and the remaining $31$ have $\ML\le 5/16$.

\begin{proposition}\label{prop:list}
Up to symmetry there are exactly $31$ coordinate-free two-dimensional subtori $U\subseteq\T^4$ with $\ML(U)\le 5/16$. They are listed in Table~\ref{tab:tori}. Exactly two of them have $\ML(U)=1/4$, namely
\[
 \Uone=\{(X,Y,2Y,3Y)\},\qquad \Utwo=\{(X,Y,X+Y,X+2Y)\},
\]
and the other $29$ satisfy $2/7\le\ML(U)\le 5/16$ and $\covol(U)^2\le 422$. There is no coordinate-free two-dimensional subtorus with $\ML(U)<1/4$.
\end{proposition}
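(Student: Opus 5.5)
The plan is to combine the reduction of Lemma~\ref{lem:reduce} with the exact enumeration described just before the statement, and then verify the values on the surviving subtori. Let $U$ be a coordinate-free two-dimensional subtorus with $\ML(U)\le 5/16$. By Lemma~\ref{lem:reduce}, either $\covol(U)<33$, or up to symmetry $U=U_n\subseteq\Wone$ or $\Wtwo$ with $\eucl n\le 48$. In both cases $U$ belongs to an explicit finite list: the $7{,}011{,}052$ Hermite-normal-form bases of covolume at most $33$, together with the $192{,}514+192{,}513$ kernels $U_n$ transported into $\Z^4$. Completeness of the first list rests on the uniqueness of the row Hermite normal form and on the entry bounds coming from the minors, cross-checked by the Plücker counts. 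Completeness of the second rests on the covolume formulas $n_1^2+5n_2^2+5n_3^2$ and $2n_1^2-2n_1n_2+2n_2^2+3n_3^2$, which I would verify symbolically by computing a kernel basis.

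Next comes the elimination step. Since $\ML(U)\ge G(x)$ for every $x\in U$, a single grid point with $G>5/16$ is a rigorous certificate that $U$ should be discarded. The grids reduce the list to $123$ classes up to the group of order $384$. For each class I would compute $\ML(U)$ exactly. Parametrize $U$ by a basis $s,r$, so that $G(Xs+Yr)=\min_i\norm{Xs_i+Yr_i}$ is piecewise linear on $\T^2$. Its maximum is attained at a vertex of the arrangement of lines $Xs_i+Yr_i\in\frac12\Z$ and $(Xs_i+Yr_i)\pm(Xs_j+Yr_j)\in\Z$. This is the two-dimensional analogue of Lemma~\ref{lem:corner}: at a local maximum, either a cusp where $\norm{\cdot}=\frac12$ is active, or two or three active linear pieces balance. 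This makes the maximum a finite rational computation. That discards $92$ classes and leaves $31$ with exact values, which fill Table~\ref{tab:tori}. Among them, the covolume bound $\covol^2\le 422$ and the bound $\ML\ge 2/7$ off $\Uone,\Utwo$ are read off the table.

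For $\Uone$ and $\Utwo$ I would give hand proofs that $\ML=1/4$. For the lower bound, take the points $(\frac12,\frac14,\frac12,\frac34)\in\Uone$ and $(\frac14,\frac14,\frac12,\frac34)\in\Utwo$, each of which has $G=\frac14$. For the upper bound on $\Uone$, we have $\min(\norm Y,\norm{2Y},\norm{3Y})\le\frac14=\ML(1,2,3)$, by Kravitz's $n=3$ case or by a direct check. On $\Utwo$, suppose $\norm X,\norm Y,\norm{X+Y},\norm{X+2Y}>\frac14$. A short case analysis modulo $1$ gives a contradiction. Put $Y\in(\frac14,\frac34)$. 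If $Y<\frac12$, then $X+Y$ and $X+2Y$ differ by $Y<\frac12$ and must both avoid $(-\frac14,\frac14)$, while $X$ avoids it as well. Tracking the intervals shows this is impossible, and the case $Y>\frac12$ follows by symmetry.

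Finally, the last sentence of the statement follows at once. A subtorus with $\ML<1/4$ would satisfy $\ML\le5/16$, hence appear among the $31$, all of which have $\ML\ge\frac14$. The main obstacle is the exact maximization over the $123$ survivors, both its correctness (that the vertex enumeration is exhaustive) and the certification of upper bounds. I would certify each upper bound independently by covering the fundamental domain with the polygons cut out by the arrangement and checking on each polygon that the minimum of the active linear forms is at most the claimed value.
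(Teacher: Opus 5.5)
Your proposal is correct and follows the paper's proof. It gets completeness from Lemma~\ref{lem:reduce} and the enumeration, discards only on explicit points with $G>5/16$, computes exact maxima for the $123$ survivors, and takes lower bounds from the tabulated maximizers; your arrangement-vertex justification of the exact maximization is a valid formulation. The one difference is that you prove $\ML(\Uone)=\ML(\Utwo)=\frac14$ directly from witness points, $\ML(1,2,3)=\frac14$ and a short interval argument (which should also dispose of the trivial case $Y=\frac12$), whereas the paper derives it from the superlevel-set Lemmas~\ref{lem:U1cells} and~\ref{lem:U2cells}, which it needs later for the two infinite families.
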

\begin{proof}
The enumeration described above is complete by Lemma~\ref{lem:reduce}, and a subtorus is discarded only on the evidence of an explicit point. For each of the $31$ listed subtori the table gives a point attaining the stated value, which proves the lower bounds, in particular $\ML\ge 2/7$ for the $29$ subtori other than $\Uone$ and $\Utwo$. The exact values $\ML(\Uone)=\ML(\Utwo)=1/4$ are proved in Lemmas~\ref{lem:U1cells} and \ref{lem:U2cells}. The upper bounds $\ML\le 5/16$ for the other $29$ come from the exact maximization and are not used in the sequel. The last sentence holds because the list contains all coordinate-free subtori with $\ML\le 5/16$ and the smallest value in it is $1/4$.
\end{proof}

Up to symmetry, $\Uone$ and $\Utwo$ are the subtori $\langle(0,1,2,3),(1,0,0,0)\rangle$ and $\langle(1,0,1,1),(1,1,0,2)\rangle$ of \cite[Section~3.3]{JainKravitz}, and the last two assertions of the proposition recover the classification given there. The values occurring in Table~\ref{tab:tori} are $1/4,2/7,3/10,4/13,5/16$, that is, the values $s/(3s+1)$ of the three-runner spectrum \cite[Theorem~7.2]{Kravitz}, which is consistent with the inclusion $\mathcal S_2(4)\subseteq\mathcal S^*_1(3)$ of \cite{GiriKravitz}.

\section{Circles of length at least $512$}
\begin{lemma}\label{lem:strip}
Let $T_v\subseteq U$ with $U$ a two-dimensional subtorus of covolume $A$ and $V=\eucl v$. Every point of $U$ lies within Euclidean distance $A/(2V)$ of $T_v$, and hence $\ML(T_v)\ge\ML(U)-A/(2V)$.
\end{lemma}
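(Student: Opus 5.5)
The idea is to work inside the plane $E=\R\Lambda_U$ and treat $\Lambda_U$ as a two-dimensional lattice containing the primitive vector $v$. Since $T_v\subseteq U$ and $\Lambda_{T_v}=\Z v$ is primitive in $\Z^4$, $v$ is also primitive in $\Lambda_U$. So $v$ extends to a basis $v,w$ of $\Lambda_U$. The area of the parallelogram spanned by $v$ and $w$ is $A$, hence the component of $w$ orthogonal to $v$ inside $E$, call it $u$, has length $\eucl u=A/V$.

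Next I would show that the lines $\R v+ju$, $j\in\Z$, cover $E$ with spacing $A/V$, and that all of them project onto $T_v$ modulo $\Lambda_U$. Write $w=u+cv$ with $c\in\R$. Then
\[
 \R v+j u=\R v+jw-jcv=\R v+jw,
\]
and $jw\in\Z^4$, so each line $\R v+ju$ represents the same set $T_v$ in $\T^4$.

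Now take any $y\in E$ representing a point of $U$ and write $y=sv+bu$. Choose $j\in\Z$ with $|b-j|\le\frac12$. Then $y-(b-j)u=sv+ju$ lies on one of these lines, so it represents a point of $T_v$. Its Euclidean distance from $y$ is
\[
 |b-j|\,\eucl u\le \frac{A}{2V}.
\]
This proves the first assertion.

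For the inequality, I would use \eqref{eq:lip} together with $\supn{p}\le\eucl p$. Taking $y$ to be a maximizer of $G$ on $U$ gives
\[
 \ML(U)=G(y)\le G\bigl(y-(b-j)u\bigr)+\frac{A}{2V}\le \ML(T_v)+\frac{A}{2V}.
\]

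The only point that needs care is the primitivity of $v$ in $\Lambda_U$, which is what lets $v$ extend to a basis. This holds because any $z\in\Lambda_U$ with $kz=v$ lies in $\Z^4$, and $v$ is primitive in $\Z^4$, so $k=\pm1$. Everything else is the same one-step rounding argument as in Lemma~\ref{lem:density}(c), now carried out within $E$ rather than with $E^\perp$.
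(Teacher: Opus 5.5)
Your proof is correct and is essentially the paper's argument: extend $v$ to a basis $v,\lambda$ of $\Lambda_U$, observe that the lines $\R v+j\lambda$ are parallel at spacing $A/V$ and each maps onto $T_v$, round to the nearest line, and finish with \eqref{eq:lip} and $\supn{\cdot}\le\eucl{\cdot}$. Your explicit checks that $v$ is primitive in $\Lambda_U$ and that $\R v+ju=\R v+jw$ spell out steps the paper states without comment.
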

\begin{proof}
In the plane $E=\R\Lambda_U$ the lattice $\Lambda_U$ contains $v$ as a primitive vector, so $\Lambda_U=\Z v+\Z\lambda$ for some $\lambda$, and the lines $\R v+k\lambda$, $k\in\Z$, are parallel at mutual distance $A/V$, because the parallelogram spanned by $v$ and $\lambda$ has area $A$ and base $V$. Each of these lines maps onto $T_v$ in $\T^4$. Every point of $E$ is within $A/(2V)$ of one of the lines. The inequality follows from \eqref{eq:lip} and $\supn{\cdot}\le\eucl{\cdot}$.
\end{proof}

\begin{proposition}\label{prop:large}
Let $v\in\Z^4$ be primitive with all $v_i\ne 0$, $\ML(T_v)<1/4$ and $\eucl v\ge 512$. Then, up to symmetry, $T_v\subseteq\Uone$ or $T_v\subseteq\Utwo$.
\end{proposition}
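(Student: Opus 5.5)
Apply Lemma~\ref{lem:density} to $S=T_v$, which has $k=1$ and $A=\covol(T_v)=\eucl v\ge 512$. Part (b) gives $w\in\Z^4\setminus\Z v$ with $\supn{\pi(w)}\le A^{-1/3}\le 512^{-1/3}=1/8$. Part (c) then gives a two-dimensional subtorus $U\supseteq T_v$ with
\[
 \ML(U)\le \ML(T_v)+\tfrac12\cdot\tfrac18<\tfrac14+\tfrac1{16}=\tfrac5{16}.
\]
This is where the threshold $512$ comes from. Since all $v_i\ne0$, the circle $T_v$ is coordinate-free, and so is every subtorus containing it, including $U$. Proposition~\ref{prop:list} therefore applies: up to symmetry, $U$ is one of the $31$ listed subtori.

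If $U$ is $\Uone$ or $\Utwo$, the conclusion of the proposition holds. So suppose $U$ is one of the other $29$. Then $\ML(U)\ge 2/7$ and $A_U:=\covol(U)\le\sqrt{422}<21$. Lemma~\ref{lem:strip} gives
\[
 \ML(T_v)\ge \ML(U)-\frac{A_U}{2\eucl v}\ge \frac27-\frac{\sqrt{422}}{2\cdot 512}.
\]
Since $\frac27-\frac14=\frac1{28}$, this exceeds $\frac14$ as soon as $\eucl v>14\sqrt{422}\approx 287.6$, so in particular for $\eucl v\ge 288$. That contradicts $\ML(T_v)<1/4$, and this matches the ``length $\ge288$'' remark in the Method section. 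For a cleaner argument I would use the individual values $\ML(U)$ and $\covol(U)$ from Table~\ref{tab:tori} only if the uniform bound were too weak, and here it is not.

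The only delicate point is bookkeeping. Lemma~\ref{lem:density}(c) gives $\ML(T_v)\ge\ML(U)-\tfrac1{16}$, which yields $\ML(U)<5/16$ strictly, and that sits inside the hypothesis $\ML(U)\le5/16$ of Proposition~\ref{prop:list}. I also need the symmetry that brings $U$ to its listed representative to carry $T_v$ along with it, giving $T_v\subseteq\Uone$ or $T_v\subseteq\Utwo$ up to symmetry. This holds because signed coordinate permutations map circles to circles and preserve inclusion and $\ML$. The main burden lies in Proposition~\ref{prop:list} itself, which is assumed here.
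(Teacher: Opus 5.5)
Your proposal is correct and follows the paper's proof essentially step for step. You apply Lemma~\ref{lem:density} with $k=1$ to get $\ML(U)<5/16$, invoke Proposition~\ref{prop:list}, and rule out the other $29$ subtori via Lemma~\ref{lem:strip} using $\ML(U)\ge 2/7$ and $\covol(U)\le\sqrt{422}$, which forces $\eucl v<14\sqrt{422}<288$.
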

\begin{proof}
Put $V=\eucl v=\covol(T_v)$. Lemma~\ref{lem:density} with $k=1$ gives a two-dimensional subtorus $U\supseteq T_v$ with
\[
 \ML(U)\ \le\ \ML(T_v)+\tfrac12V^{-1/3}\ <\ \tfrac14+\tfrac1{16}=\tfrac5{16}.
\]
$U$ is coordinate-free, so by Proposition~\ref{prop:list} it is one of the $31$ subtori up to symmetry. If $U$ is one of the $29$ subtori with $\ML(U)\ge 2/7$, Lemma~\ref{lem:strip} gives $\frac14>\ML(T_v)\ge\frac27-\frac{A}{2V}$, hence $V<14A\le 14\sqrt{422}<288$, a contradiction. So $U$ is $\Uone$ or $\Utwo$ up to symmetry.
\end{proof}

\begin{remark}\label{rem:generaln}
Nothing in Lemma~\ref{lem:density} is special to dimension four. In $(\R/\Z)^n$ the same argument gives a vector $w$ with $\supn{\pi(w)}\le\covol(S)^{-1/(n-k)}$, so a circle of length $V$ is $\frac12V^{-1/(n-1)}$-dense in a two-dimensional subtorus. Section~\ref{sec:general} uses the $n$-dimensional form of the lemma to make the reduction of near-tight tuples to critical subtori in \cite[Lemma~2.4]{Cordella} effective for every $n$.
\end{remark}

\begin{lemma}\label{lem:families}
Let $v\in\Z^4$ be primitive with all $v_i\ne0$. If $T_v\subseteq\Uone$ up to symmetry, then $\{|v_i|\}=\{a,b,2b,3b\}$ for integers $a,b\ge 1$. If $T_v\subseteq\Utwo$ up to symmetry, then $\{|v_i|\}=\{x,y,x+y,x+2y\}$ for integers $x,y\ge 1$.
\end{lemma}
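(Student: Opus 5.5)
Because $\Lambda_{T_v}=\Z v$, the inclusion $T_v\subseteq U$ for a subtorus $U$ holds exactly when $v\in\Lambda_U$. Signed coordinate permutations act on $\Lambda_U$ and on $v$ at the same time, and they do not change the multiset $\{|v_i|\}$. So we may drop the phrase ``up to symmetry'' and assume $v\in\Lambda_{\Uone}$ or $v\in\Lambda_{\Utwo}$. The plan is to write $v$ in a basis of each lattice and read off the absolute values of its coordinates.

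For $\Uone=\{(X,Y,2Y,3Y)\}$ I first check that the lattice is $\Lambda_{\Uone}=\Z(1,0,0,0)+\Z(0,1,2,3)$. These two vectors span the plane $E$, and any integer point $(X,Y,2Y,3Y)$ of $E$ has $X,Y\in\Z$, so they form a basis. Hence $v=(a',b',2b',3b')$ with $a',b'\in\Z$. Both are nonzero because all $v_i\neq0$. Setting $a=|a'|$ and $b=|b'|$ gives $\{|v_i|\}=\{a,b,2b,3b\}$. Primitivity of $v$ only adds $\gcd(a,b)=1$, which the statement does not require.

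For $\Utwo=\{(X,Y,X+Y,X+2Y)\}$ the same reasoning gives $\Lambda_{\Utwo}=\{(x',y',x'+y',x'+2y'):x',y'\in\Z\}$, so $v=(x',y',x'+y',x'+2y')$ with all four entries nonzero. I then choose a sign-and-reparametrization symmetry of the configuration that makes both parameters positive. The relations among the coordinates are $v_3=v_1+v_2$ and $v_4=v_3+v_2$; equivalently $v_1,v_3,v_4$ is an arithmetic progression with common difference $v_2$.

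If $x',y'>0$ we are done. If $x',y'<0$, negating $v$ reduces to the previous case. If the signs are mixed, I replace $(x',y')$ by other parameter pairs that give the same multiset of absolute values:
\begin{itemize}
\item The substitution $(x',y')\mapsto(x'+2y',-y')$ sends the tuple to $(x'+2y',-y',x'+y',x')$, which has the same absolute values.
\item The substitution $(x',y')\mapsto(-x'-y',y')$ gives $(-x'-y',y',-x',-x'+y')$. Its absolute values are $\{|x'+y'|,|y'|,|x'|,|x'-y'|\}$, which is not the same set, so I will not use it.
\end{itemize}
It is cleaner to argue directly. Put $p=|x'|$ and $q=|y'|$.
\begin{itemize}
\item Suppose $x'>0>y'$. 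If $x'+2y'>0$, the absolute values are $\{p,q,p-q,p-2q\}$, and with $x=p-2q$, $y=q$ this is $\{x+2y,y,x+y,x\}$. If instead $x'+y'<0$, the values are $\{p,q,q-p,2q-p\}$. If $x'+y'>0>x'+2y'$, the values are $\{p,q,p-q,2q-p\}$; with $x=p-q$ and $y=2q-p$ one gets $x+y=q$ and $x+2y=3q-p$, which does not match $p$. I will recheck this middle case with the correct choice: try $x=2q-p$ and $y=p-q$. Then $x+y=q$ and $x+2y=p$, and the set is $\{2q-p,p-q,q,p\}$, which matches.
\item The case $x'<0<y'$ follows from the previous one by negating $v$.
\end{itemize}

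The main obstacle is this sign bookkeeping for $\Utwo$. I expect to handle it cleanly by sorting the three values $v_1,v_3,v_4$ of the arithmetic progression together with its difference $v_2$. Zero lies strictly between two consecutive terms, or outside all of them, and each position produces an explicit positive pair $(x,y)$ as above.
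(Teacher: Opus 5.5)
Your proof is correct and follows the paper's argument. You identify $\Lambda_{\Uone}$ and $\Lambda_{\Utwo}$ with explicit bases and write a signed permutation of $v$ as $(a',b',2b',3b')$ or $(x',y',x'+y',x'+2y')$. For $\Utwo$ you then reduce to $x'>0>y'$ by negation and split by the signs of $x'+y'$ and $x'+2y'$, which gives the same pairs $(p-2q,q)$ and $(2q-p,p-q)$ as the paper.

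One step is left unfinished. In the case $x'+y'<0$ you list the values $\{p,q,q-p,2q-p\}$ but never name the pair; it is $(x,y)=(p,q-p)$, giving $V(p,q-p)$, which is exactly the paper's third case. With that written in, the case analysis is complete, since the boundary cases $x'+y'=0$ and $x'+2y'=0$ are excluded by $v_i\neq0$. The abandoned substitutions and the closing ``I expect to handle it'' paragraph can then be deleted.
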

\begin{proof}
$T_v\subseteq U$ means $v\in\Lambda_U$. The lattices are $\Lambda_{\Uone}=\Z(1,0,0,0)+\Z(0,1,2,3)$ and $\Lambda_{\Utwo}=\Z(1,0,1,1)+\Z(0,1,1,2)$ (their Pl\"ucker coordinates have greatest common divisor $1$, so these lattices are primitive). A signed permutation of $v$ is therefore $(a,b,2b,3b)$, respectively $(x,y,x+y,x+2y)$, for integers $a,b$, respectively $x,y$, and the multiset of absolute values is $\{|a|,|b|,2|b|,3|b|\}$, respectively $\{|x|,|y|,|x+y|,|x+2y|\}$. In the first case replace $a,b$ by their absolute values. In the second case, if $x$ and $y$ have opposite signs, say $x>0>y$ and $y'=-y$, then the multiset is $\{x,y',|x-y'|,|x-2y'|\}$. If $x>2y'$ it equals $V(x-2y',y')$, if $y'<x<2y'$ it equals $V(2y'-x,x-y')$, and if $x<y'$ it equals $V(x,y'-x)$. The cases $x=y'$ and $x=2y'$ produce a zero coordinate and are excluded.
\end{proof}

\section{The exhaustive search and the proof of Theorem~\ref{thm:main}}\label{sec:search}
By Proposition~\ref{prop:large} and Lemma~\ref{lem:families}, every primitive quadruple $v$ with $\ML(v)<1/4$ is of the form $\{a,b,2b,3b\}$ or $\{x,y,x+y,x+2y\}$, or satisfies $\eucl v<512$. The two families are analyzed in Section~\ref{sec:families}. For the remaining case we enumerate all $800{,}816{,}099$ primitive quadruples $v_1<v_2<v_3<v_4$ with $v_1^2+v_2^2+v_3^2+v_4^2<512^2$ and decide for each whether $\ML(v)<1/4$. A quadruple is accepted as soon as some time $t=j/Q$ from a fixed list of $727$ rational times with $Q\le 512$ has $\norm{v_it}\ge\frac14$ for all $i$. The $16{,}649$ quadruples that pass no such test are handled by the exact corner computation of Lemma~\ref{lem:corner}. Exactly $14{,}032$ quadruples have $\ML<1/4$. All of them belong to one of the two families, with the single exception
\[
 v=(1,3,4,14),\qquad \ML(1,3,4,14)=\tfrac4{17},
\]
attained at $t=7/17$, where $(\norm{7/17},\norm{21/17},\norm{28/17},\norm{98/17})=(7,4,6,4)/17$. This quadruple was first observed by Cordella \cite[Section~6.3]{Cordella}, whose search covered all quadruples with speeds at most $200$. Restricting our output to that range reproduces his counts exactly: $4{,}125$ primitive quadruples with $\ML<1/4$, of which $3{,}836$ have $k=1$ and $289$ have $k=2$ in the notation $\ML=s/(4s+k)$, with $50$ on $\Uone$ and $4{,}075$ on $\Utwo$ (one quadruple on both) and one on neither. That $(1,3,4,14)$ is in neither family is immediate: it contains no triple $\{b,2b,3b\}$, and the two smallest elements of $V(x,y)$ are $x$ and $y$, so $V(x,y)=\{1,3,4,14\}$ would force $\{x,y\}=\{1,3\}$, but $V(1,3)=\{1,3,4,7\}$ and $V(3,1)=\{1,3,4,5\}$. The quadruple lies on the subtorus $\{(X,Y,X+Y,2X+4Y)\}$, which appears in Table~\ref{tab:tori} with $\ML=2/7$ and covolume $\sqrt{27}$.

\begin{proof}[Proof of Theorem~\ref{thm:main}]
Let $V$ be a primitive quadruple with $\ML(V)<1/4$. By the discussion above, $V$ is $\{a,b,2b,3b\}$, or $V(x,y)$, or $\{1,3,4,14\}$. Theorem~\ref{thm:U1} shows that $\{a,b,2b,3b\}$ has $\ML<1/4$ only for $b=1$ and $4\mid a$, with the value in (a). Theorem~\ref{thm:U2} shows that $V(x,y)$ has $\ML<1/4$ exactly under the congruence conditions in (b), with the stated value, and gives $\ML(V(x,y))=1/4$ otherwise. Conversely each listed quadruple has the stated value below $1/4$. For the overlap statement, note that the two smallest elements of $V(x,y)$ are $x$ and $y$, so $\{1,2,3,4k\}=V(x,y)$ forces $\{x,y\}=\{1,2\}$, and $V(1,2)=\{1,2,3,5\}$ is not of the form $\{1,2,3,4k\}$ while $V(2,1)=\{1,2,3,4\}$ is. The sporadic quadruple belongs to neither family by Section~\ref{sec:search}.
\end{proof}

\section{The two infinite families}\label{sec:families}
For a circle $T\subseteq U$ in a two-dimensional subtorus, the value $\ML(T)$ is the largest $d$ such that $T$ meets the superlevel set $S_d=\{x\in U:G(x)\ge d\}$. We describe $S_d$ explicitly on $\Uone$ and $\Utwo$ and use Lemma~\ref{lem:orbit}.

\subsection{The family $\{a,b,2b,3b\}$}
Identify $\Uone$ with $\T^2$ through $(X,Y)\mapsto(X,Y,2Y,3Y)$, so that $G=\min(\norm X,\norm Y,\norm{2Y},\norm{3Y})$, and the circle of $(a,b)$ is $\{(at,bt)\}$.

\begin{lemma}\label{lem:U1cells}
For $\frac15<d\le\frac14$,
\[
 S_d=\bigl[d,1-d\bigr]\times\Bigl(\bigl[d,\tfrac{1-d}{3}\bigr]\cup\bigl[\tfrac{2+d}{3},1-d\bigr]\Bigr)\subseteq[0,1)^2 .
\]
In particular $\ML(\Uone)=1/4$, attained exactly on the two segments $Y\in\{\frac14,\frac34\}$, $X\in[\frac14,\frac34]$.
\end{lemma}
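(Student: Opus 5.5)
The superlevel set factors as a product: $G(X,Y)=\min(\norm X,\,H(Y))$ with $H(Y)=\min(\norm Y,\norm{2Y},\norm{3Y})$. Hence
\[
 S_d=\{X:\norm X\ge d\}\times\{Y:H(Y)\ge d\}.
\]
For $d\le\frac12$ the first factor is $[d,1-d]$ in the fundamental domain $[0,1)$. So the whole statement reduces to the one-variable problem of describing $\{Y\in[0,1):H(Y)\ge d\}$ for $\frac15<d\le\frac14$.

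For the $Y$-factor I intersect three explicit unions of intervals in $[0,1)$:
\[
 \{\norm Y\ge d\}=[d,1-d],\qquad
 \{\norm{2Y}\ge d\}=\Bigl[\tfrac d2,\tfrac{1-d}2\Bigr]\cup\Bigl[\tfrac{1+d}2,1-\tfrac d2\Bigr],
\]
\[
 \{\norm{3Y}\ge d\}=\bigcup_{j=0}^{2}\Bigl[\tfrac{j+d}3,\tfrac{j+1-d}3\Bigr].
\]
Using the symmetry $Y\mapsto1-Y$, it suffices to work on $[0,\frac12]$. There the constraint $Y\ge d$ together with the $2Y$ condition gives $Y\in[d,\frac{1-d}2]$. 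Since $d>\frac15$, we have $\frac{1-d}{2}<\frac{1+d}{3}$, so this interval misses the middle $3Y$ interval $[\frac{1+d}3,\frac{2-d}3]$. Intersecting with the first $3Y$ interval $[\frac d3,\frac{1-d}3]$ leaves $[d,\frac{1-d}3]$, because $\frac{1-d}3\le\frac{1-d}2$ and $d\ge\frac d3$. The step where $d>\frac15$ enters is checking these inequalities carefully: it is exactly what removes the middle piece near $Y=\frac12$. At $d=\frac15$ we would have $\frac{1-d}2=\frac{1+d}3=\frac25$, and an extra point would survive. Reflecting gives $[\frac{2+d}3,1-d]$. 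This yields the displayed formula.

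For the last assertion, note that $[d,\frac{1-d}3]$ is nonempty iff $d\le\frac14$. So $S_d\neq\emptyset$ for $d=\frac14$, while $S_d=\emptyset$ for $\frac14<d$. The case $d>\frac14$ needs a short separate check, because the displayed formula is stated only for $d\le\frac14$. The chain of inequalities above still forces $Y\in[d,\frac{1-d}3]$ on $[0,\frac12]$ (now with $d>\frac15$ automatically), and this interval is empty. Hence $\ML(\Uone)=\frac14$. At $d=\frac14$ the $Y$-factor degenerates to $\{\frac14,\frac34\}$ and the $X$-factor is $[\frac14,\frac34]$, which gives the two segments on which the maximum is attained.
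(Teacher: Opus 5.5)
Your proposal is correct and follows the paper's proof essentially line by line. Both separate the $X$-condition $\norm X\ge d$ and intersect the three interval conditions for $Y$ on $[0,\frac12]$; both then use $d>\frac15$ to discard the piece $[\frac{1+d}3,\frac{1-d}2]$, use $d\le\frac14$ for nonemptiness of $[d,\frac{1-d}3]$, and reflect by $Y\mapsto 1-Y$. Your separate check that $S_d$ is empty for $d>\frac14$ spells out what the paper leaves implicit in its general intersection formula.
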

\begin{proof}
The condition on $X$ is $\norm X\ge d$. For $Y\in[0,\frac12]$ the three conditions $\norm Y,\norm{2Y},\norm{3Y}\ge d$ read $Y\in[d,1-d]$, $Y\in[\frac d2,\frac{1-d}2]$, and $Y\in[\frac d3,\frac{1-d}3]\cup[\frac{1+d}3,\frac{2-d}3]$. Their intersection is $[d,\frac{1-d}3]\cup[\frac{1+d}3,\frac{1-d}2]$, and the second interval is empty exactly when $d>\frac15$. The first is nonempty exactly when $d\le\frac14$. The symmetry $Y\mapsto 1-Y$ gives the other piece.
\end{proof}

\begin{theorem}\label{thm:U1}
Let $a,b\ge1$ be coprime with $a\notin\{b,2b,3b\}$. Then $\ML(a,b,2b,3b)=1/4$ unless $b=1$ and $4\mid a$, in which case $\ML(1,2,3,a)=a/(4a+4)$.
\end{theorem}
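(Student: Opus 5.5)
By Lemma~\ref{lem:families}, the circle of $(a,b,2b,3b)$ is the circle $\{(at,bt)\}$ in the coordinates of $\Uone\cong\T^2$. Since it lies in $\Uone$, we have $\ML(a,b,2b,3b)\le\ML(\Uone)=\tfrac14$. The plan is to decide, via Lemma~\ref{lem:orbit}, when this circle meets the superlevel sets $S_d$ of Lemma~\ref{lem:U1cells}, and to compute the largest such $d$.

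\emph{Step 1: the value is $\tfrac14$ unless $b=1$ and $4\mid a$.} By Lemma~\ref{lem:U1cells}, $\ML=\tfrac14$ exactly when the circle meets $S_{1/4}$, that is, the segments $Y\in\{\tfrac14,\tfrac34\}$, $X\in[\tfrac14,\tfrac34]$. By Lemma~\ref{lem:orbit}, the point $(X,\tfrac14)$ lies on the circle iff $bX-\tfrac a4\in\Z$. The solutions are $X=(\tfrac a4+j)/b$, $j\in\Z$, which are spaced $1/b$ apart in $\T$.
\begin{itemize}[leftmargin=*]
\item If $b\ge2$, the spacing is at most $\tfrac12$, so the closed interval $[\tfrac14,\tfrac34]$ of length $\tfrac12$ contains a solution, and $\ML=\tfrac14$.
\item If $b=1$, the only solution is $X\equiv \tfrac a4$. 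It lies in $[\tfrac14,\tfrac34]$ iff $\norm{a/4}\ge\tfrac14$, i.e.\ iff $4\nmid a$. The segment $Y=\tfrac34$ gives $X\equiv \tfrac{3a}{4}$, which leads to the same condition.
\end{itemize}

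\emph{Step 2: the case $b=1$, $a=4k$.} Here the speeds are $1,2,3,a$, and the circle is the graph $X\equiv aY$. The Lonely Runner Conjecture for four speeds gives $\ML\ge\tfrac15$. For $\tfrac15<d<\tfrac14$, Lemma~\ref{lem:U1cells} reduces the question to whether some $Y\in[d,\tfrac{1-d}{3}]$ (or its mirror image under $Y\mapsto1-Y$, which is symmetric) has $\norm{aY}\ge d$.

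This interval contains $\tfrac14$, where $aY\equiv0$. Write $Y=\tfrac14+\delta$. Then $aY\equiv a\delta$, with $\delta$ ranging over
\[
 -\bigl(\tfrac14-d\bigr)\le\delta\le\tfrac{1-4d}{12}.
\]
The left endpoint is the farther one from $0$, since $\tfrac{1-4d}{12}<\tfrac14-d$.
\begin{itemize}[leftmargin=*]
\item If $a(\tfrac14-d)\ge d$, then by continuity some $\delta$ in the range has $|a\delta|=d$, so $\norm{aY}\ge d$ is attained.
\item If $a(\tfrac14-d)<d$, then $|a\delta|<d<\tfrac12$ throughout the range, so $\norm{aY}<d$ everywhere on the window.
\end{itemize}
Hence the circle meets $S_d$ exactly when $d\le\frac{a}{4a+4}$. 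Since $\frac{a}{4a+4}\ge\frac15$ for $a\ge4$, this gives $\ML(1,2,3,a)=\frac{a}{4a+4}$, which equals $k/(4k+1)$ as in Theorem~\ref{thm:main}(a). The borderline $a=4$, where the value is $\tfrac15$ and Lemma~\ref{lem:U1cells} does not apply at $d=\tfrac15$, follows from the lower bound $\tfrac15$ together with the strict failure for every $d>\tfrac15$.

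\emph{Expected obstacle.} The main point of care is Step~2. One must check that only the part of $S_d$ near $Y=\tfrac14$ matters, i.e.\ that the window is too short for $aY$ to wrap around to distance $\ge d$ from $\Z$ on the far side. One must also handle correctly the endpoint $d=\tfrac15$, which lies outside the range of Lemma~\ref{lem:U1cells}.
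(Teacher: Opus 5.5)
Your proof is correct and follows the paper's argument: Step 1 is the same segment test on $S_{1/4}$, and Step 2 is the same first-contact computation on the rectangle $[d,1-d]\times[d,\frac{1-d}3]$, with you eliminating $X$ via $X\equiv aY$ where the paper tracks both ends of the image interval of $X-aY$ (your dominated right endpoint is the paper's later contact at $d=\frac k{4k+3}$). The only other difference is cosmetic: for $a=4$ you take the lower bound $\frac15$ from the four-speed conjecture, whereas the paper notes that the contact point at $d=\frac15$ still lies in the rectangle, which is contained in $S_{1/5}$.
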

\begin{proof}
Since $\Uone$ contains the circle, $\ML(a,b,2b,3b)\le\ML(\Uone)=\frac14$. By Lemma~\ref{lem:orbit} the circle is $\{bX-aY\in\Z\}$. Its image under $\phi(X,Y)=bX-aY$ on the segment $Y=\frac14$, $X\in[\frac14,\frac34]$ is an interval of length $\frac b2$. If $b\ge2$ it contains an integer, so $\ML=\frac14$. Let $b=1$. The interval is $[\frac{1-a}4,\frac{3-a}4]$, which contains an integer exactly when $4\nmid a$. Let $a=4k$. On the rectangle $R_d=[d,1-d]\times[d,\frac{1-d}3]$ the image of $\phi$ is $[\,d-\frac{4k(1-d)}3,\ 1-d-4kd\,]$. As $d$ decreases from $\frac14$, the upper end increases from $\frac34-k$ and reaches the integer $1-k$ at $d=\frac{k}{4k+1}$, while the lower end decreases from $\frac14-k$ and reaches the integer $-k$ at $d=\frac{k}{4k+3}<\frac k{4k+1}$. The other rectangle behaves in the same way by the symmetry $(X,Y)\mapsto(1-X,1-Y)$. Since $\frac k{4k+1}\ge\frac15$ for $k\ge1$, Lemma~\ref{lem:U1cells} applies for all $d\in(\frac15,\frac14]$ and the first contact of the circle with $S_d$ occurs at $d=\frac k{4k+1}$. If $k\ge2$ this is above $\frac15$ and we are done. If $k=1$ the circle misses $S_d$ for $d>\frac15$, so $\ML\le\frac15$, and the contact at $d=\frac15$ lies in the rectangle, which is part of $S_{1/5}$, so $\ML=\frac15$. Finally $\frac k{4k+1}=\frac a{4a+4}$.
\end{proof}
The values $k/(4k+1)$ are Kravitz's family $\ML(1,2,\dots,n-1,ns)=s/(ns+1)$ \cite[Theorem~5.1]{Kravitz}, here with $n=4$.

\subsection{The family $V(x,y)$}
Identify $\Utwo$ with $\T^2$ through $(X,Y)\mapsto(X,Y,X+Y,X+2Y)$, so that
\[
 G(X,Y)=\min\bigl(\norm X,\norm Y,\norm{X+Y},\norm{X+2Y}\bigr),
\]
and the circle of $(x,y)$ is $\{yX-xY\in\Z\}$ by Lemma~\ref{lem:orbit}.

\begin{lemma}\label{lem:U2cells}
For $0<d\le\frac14$, the set $S_d\cap[0,1)^2$ is the disjoint union of the four triangles
\begin{align*}
 \Delta_1(d)&=\{X\ge d,\ Y\ge d,\ X+2Y\le 1-d\},\\
 \Delta_2(d)&=(1,1)-\Delta_1(d),\\
 \Delta_3(d)&=\{X\ge d,\ X+Y\le 1-d,\ X+2Y\ge 1+d\},\\
 \Delta_4(d)&=(1,1)-\Delta_3(d).
\end{align*}
Writing $d=\frac14-\varepsilon$, these are $c_j+\varepsilon Q_j$ with centres and vertex sets
\begin{align*}
 c_1&=(\tfrac14,\tfrac14),& Q_1&=\operatorname{conv}\{(-1,-1),(-1,1),(3,-1)\},\\
 c_3&=(\tfrac14,\tfrac12),& Q_3&=\operatorname{conv}\{(-1,0),(-1,2),(3,-2)\},
\end{align*}
and $c_2=-c_1$, $Q_2=-Q_1$, $c_4=-c_3$, $Q_4=-Q_3$. In particular $\ML(\Utwo)=\frac14$, attained exactly at $c_1,c_2,c_3,c_4$.
\end{lemma}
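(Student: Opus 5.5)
The plan is to mirror the proof of Lemma~\ref{lem:U1cells}: write the four conditions $\norm X,\norm Y,\norm{X+Y},\norm{X+2Y}\ge d$ as membership of $X$, $Y$, $X+Y$, $X+2Y$ in $[d,1-d]$ modulo $1$, and determine the resulting region in the fundamental square $[0,1)^2$. We will first restrict to $X\in[d,1-d]$ and $Y\in[d,1-d]$. Then $X+Y\in[2d,2-2d]$, so the condition on $X+Y$ says that $X+Y\in[d,1-d]$ or $X+Y\in[1+d,2-d]$. These are the two half-regions exchanged by the point reflection $(X,Y)\mapsto(1,1)-(X,Y)$, which preserves $G$. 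It therefore suffices to treat the half $X+Y\le 1-d$ and obtain $\Delta_1$ and $\Delta_3$; the other two triangles are their reflections $\Delta_2$ and $\Delta_4$.

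On the half $X+Y\le 1-d$ with $X,Y\ge d$, we have $X+2Y=(X+Y)+Y\in[2d\,+d,\,2-2d]$. So the condition $\norm{X+2Y}\ge d$ splits into $X+2Y\le 1-d$ or $X+2Y\ge 1+d$; the range $[2-d,2)$ cannot occur because $X+2Y\le 1-d+Y\le 2-2d$.

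In the case $X+2Y\le 1-d$, the constraint $X+Y\le 1-d$ is implied by $X+2Y\le 1-d$, and $Y\le 1-d$ is automatic. This gives exactly $\Delta_1(d)$.

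In the case $X+2Y\ge 1+d$ together with $X+Y\le 1-d$, we get $Y\ge 2d\ge d$ for free. Also $Y\le 1-d-X\le 1-2d$, so the remaining box constraints hold. This leaves exactly $\Delta_3(d)$.

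Disjointness is immediate. $\Delta_1$ and $\Delta_3$ are separated by the strip $1-d<X+2Y<1+d$, and they lie strictly in $X+Y<1$ when $d>0$. Their reflections lie in $X+Y>1$.

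For the description $c_j+\varepsilon Q_j$, we compute the vertices of $\Delta_1$ and $\Delta_3$ by solving the pairwise intersections of the bounding lines with $d=\tfrac14-\varepsilon$.

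For $\Delta_1$ the vertices are $(d,d)$, $(d,(1-2d)/2)$ and $(1-3d,d)$, which are $c_1+\varepsilon(-1,-1)$, $c_1+\varepsilon(-1,1)$ and $c_1+\varepsilon(3,-1)$.

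For $\Delta_3$ the vertices are $(d,(1+d-d)/2)=(d,\tfrac12)$, $(d,1-2d)$, and the intersection of $X+Y=1-d$ with $X+2Y=1+d$, namely $(1-3d,2d)$. These are $c_3+\varepsilon(-1,0)$, $c_3+\varepsilon(-1,2)$ and $c_3+\varepsilon(3,-2)$.

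Since $Q_1$ and $Q_3$ are nondegenerate, the triangles are nonempty for $\varepsilon\ge 0$ and shrink to the points $c_j$ at $\varepsilon=0$, i.e. at $d=\tfrac14$. For $d>\tfrac14$ the defining inequalities are inconsistent: for $\Delta_1$, $X,Y\ge d$ forces $X+2Y\ge 3d>1-d$. Hence $\ML(\Utwo)=\tfrac14$, attained exactly at $c_1,c_2,c_3,c_4$.

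The only delicate step is the case split modulo $1$. One must make sure that no other integer translate of the interval $[d,1-d]$ is relevant for $X+Y$ and $X+2Y$ inside the box, which is what the range computations above verify. Everything else is routine linear algebra.
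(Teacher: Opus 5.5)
Your proposal is correct and follows essentially the same route as the paper. The paper enumerates the integer offsets $(k_3,k_4)$ for $X+Y$ and $X+2Y$, rules out $(0,2)$ and $(1,0)$, identifies $(0,0)$ and $(0,1)$ with $\Delta_1(d)$ and $\Delta_3(d)$, and obtains the other two triangles by the reflection $(X,Y)\mapsto(1,1)-(X,Y)$; you do the same after first reducing to the half $X+Y\le 1-d$, your vertex computations agree, and for the final claim it is enough to note that $S_d\subseteq S_{1/4}=\{c_1,c_2,c_3,c_4\}$ for $d>\frac14$ while $G(c_j)=\frac14$.
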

\begin{proof}
For $(X,Y)\in[0,1)^2$ we have $X,Y\in[0,1)$, $X+Y\in[0,2)$ and $X+2Y\in[0,3)$. The condition $G\ge d$ means $d\le X\le 1-d$, $d\le Y\le 1-d$, $k_3+d\le X+Y\le k_3+1-d$ for some $k_3\in\{0,1\}$ and $k_4+d\le X+2Y\le k_4+1-d$ for some $k_4\in\{0,1,2\}$. The pair $(k_3,k_4)=(0,2)$ is impossible, since $X+Y\le 1-d$ and $X+2Y\ge 2+d$ give $Y\ge 1+2d$. The pair $(1,0)$ is impossible, since $X+Y\ge 1+d$ and $X+2Y\le 1-d$ give $Y\le-2d$. For $(0,0)$ the conditions reduce to $\Delta_1(d)$: the inequalities $X\ge d$, $Y\ge d$, $X+2Y\le 1-d$ imply $X+Y\le 1-2d$, $X\le 1-3d$, $Y\le\frac{1-2d}2$, $X+Y\ge 2d$ and $X+2Y\ge 3d$, which are all the remaining constraints. For $(0,1)$ the conditions reduce to $\Delta_3(d)$: the inequalities $X\ge d$, $X+Y\le1-d$, $X+2Y\ge 1+d$ imply $Y\ge 2d$, $Y\le 1-2d$, $X\le 1-3d$, $X+Y\ge 3d$ and $X+2Y\le 2-3d$. The pairs $(1,2)$ and $(1,1)$ are the images of $(0,0)$ and $(0,1)$ under $(X,Y)\mapsto(1-X,1-Y)$, which preserves $G$. The vertices of $\Delta_1(d)$ are $(d,d)$, $(d,\frac{1-2d}2)$ and $(1-3d,d)$, those of $\Delta_3(d)$ are $(d,\frac12)$, $(d,1-2d)$ and $(1-3d,2d)$, and substituting $d=\frac14-\varepsilon$ gives the stated form. The triangles are nonempty exactly for $d\le\frac14$ and reduce to the centres at $d=\frac14$.
\end{proof}

\begin{theorem}\label{thm:U2}
Let $x,y\ge1$ be coprime with $x\ne y$, and let $r=(y-x)\bmod4$, $r'=(y-2x)\bmod 4$. Then $\ML(V(x,y))=\frac14$ if $r=0$ or $r'=0$, and otherwise
\begin{equation}\label{eq:U2formula}
 \ML(V(x,y))=\frac14-\min\Bigl\{\frac{r}{4(x+y)},\frac{4-r}{4(x+3y)},\frac{r'}{4(2x+y)},\frac{4-r'}{4(2x+3y)}\Bigr\}.
\end{equation}
\end{theorem}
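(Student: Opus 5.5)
The plan mirrors the proof of Theorem~\ref{thm:U1}: work on $\Utwo\cong\T^2$ with $\phi(X,Y)=yX-xY$. By Lemma~\ref{lem:orbit} the circle of $V(x,y)$ is $\{\phi\in\Z\}$. Since the circle lies in $\Utwo$, we get $\ML(V(x,y))\le\ML(\Utwo)=\frac14$. For $d=\frac14-\varepsilon$ the circle meets $S_d$ exactly when $\phi(\Delta_j(d))$ contains an integer for some $j$. So $\ML(V(x,y))$ is $\frac14$ minus the smallest $\varepsilon\ge0$ at which one of the intervals $\phi(c_j+\varepsilon Q_j)$ reaches an integer.

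By linearity, $\phi(c_j+\varepsilon Q_j)=\phi(c_j)+\varepsilon\,\phi(Q_j)$. The symmetry $(X,Y)\mapsto(1-X,1-Y)$ maps $\Delta_1\leftrightarrow\Delta_2$ and $\Delta_3\leftrightarrow\Delta_4$, and it sends $\phi\mapsto (y-x)-\phi$, which preserves integrality. Hence only $j=1$ and $j=3$ need to be treated.

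For $j=1$ we have $\phi(c_1)=\frac{y-x}4$. The vertices of $Q_1$ map to $-y+x$, $-y-x$ and $3y+x$, so $\phi(Q_1)=[-(x+y),\,x+3y]$ (recall $x,y\ge1$). The interval $[\frac{y-x}4-\varepsilon(x+y),\ \frac{y-x}4+\varepsilon(x+3y)]$ contains an integer at $\varepsilon=0$ iff $r=0$. Otherwise the nearest integer below is at distance $\frac r4$ and the one above at distance $\frac{4-r}4$, so the first contact occurs at
\[
\varepsilon_1=\min\Bigl\{\frac{r}{4(x+y)},\frac{4-r}{4(x+3y)}\Bigr\}.
\]
For $j=3$ we have $\phi(c_3)=\frac{y-2x}4$. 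The vertices of $Q_3$ map to $-y$, $-y-2x$ and $3y+2x$, so $\phi(Q_3)=[-(2x+y),\,2x+3y]$. The same reasoning with $r'$ gives
\[
\varepsilon_3=\min\Bigl\{\frac{r'}{4(2x+y)},\frac{4-r'}{4(2x+3y)}\Bigr\}.
\]
Thus $\ML=\frac14$ if $r=0$ or $r'=0$, and otherwise $\ML=\frac14-\min(\varepsilon_1,\varepsilon_3)$, which is \eqref{eq:U2formula}.

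The step I expect to need care is validity: the description of $S_d$ as $c_j+\varepsilon Q_j$ in Lemma~\ref{lem:U2cells} requires $0<d\le\frac14$, i.e.\ $\varepsilon<\frac14$. Every term in the minimum is at most $\frac{3}{4(x+y)}$, which is below $\frac14$ as soon as $x+y\ge 4$. The few small pairs with $x+y\le3$ (namely $(1,2)$ and $(2,1)$) I would check directly: $r'$ or $r$ makes the minimum small enough, or one computes $\ML$ via Lemma~\ref{lem:corner}. A second subtlety is that the contact point must be taken at the first $\varepsilon$ at which an endpoint hits an integer. This works because the intervals grow monotonically in $\varepsilon$ and the sets $S_d$ are nested, so the maximal $d$ is exactly $\frac14-\min(\varepsilon_1,\varepsilon_3)$.
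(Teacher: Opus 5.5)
Your proof is correct and follows the paper's argument essentially verbatim. It uses the same ingredients: the triangles $c_j+\varepsilon Q_j$ of Lemma~\ref{lem:U2cells}, the vertex values of $\phi$, and the symmetry that pairs $\Delta_1$ with $\Delta_2$ and $\Delta_3$ with $\Delta_4$. The one difference is the check $\varepsilon^*<\frac14$: you bound the minimum by $\frac{3}{4(x+y)}$ and treat $(1,2)$ and $(2,1)$ by hand, whereas the paper gets it at once from $\ML(V(x,y))>0$.
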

\begin{proof}
Let $\phi(X,Y)=yX-xY$. By Lemma~\ref{lem:U2cells}, the circle meets $S_d$, $d=\frac14-\varepsilon$, if and only if some integer lies in $\phi(c_j+\varepsilon Q_j)$ for some $j$. The image of a triangle is the interval between the smallest and largest values at its vertices. For $Q_1$ the vertex values of $\phi$ are $x-y$, $-x-y$ and $3y+x$, so $\phi(c_1+\varepsilon Q_1)=[\,c-\varepsilon(x+y),\ c+\varepsilon(x+3y)\,]$ with $c=\phi(c_1)=\frac{y-x}4$, where we used $x,y>0$. The nearest integers to $c$ are $c-\frac r4$ and $c+\frac{4-r}4$ when $r\ne0$, and $c$ itself when $r=0$. So the circle meets $\Delta_1(d)$ exactly when $r=0$ or $\varepsilon\ge\min\{\frac r{4(x+y)},\frac{4-r}{4(x+3y)}\}$. Since $\phi(-p)=-\phi(p)$, the triangle $\Delta_2(d)$ gives the same condition. For $Q_3$ the vertex values are $-y$, $-y-2x$ and $3y+2x$, the centre value is $\phi(c_3)=\frac{y-2x}4$, and the condition is $r'=0$ or $\varepsilon\ge\min\{\frac{r'}{4(2x+y)},\frac{4-r'}{4(2x+3y)}\}$, again shared by $\Delta_4(d)$. Thus $\ML(V(x,y))=\frac14-\varepsilon^*$ with $\varepsilon^*$ the smallest $\varepsilon\ge 0$ meeting one of the conditions, which is the statement. The description of $S_d$ is valid for all $d\in(0,\frac14]$, and $\varepsilon^*<\frac14$ because $\ML(V(x,y))>0$, so no further case distinction is needed.
\end{proof}

\begin{lemma}\label{lem:parity}
In the situation of Theorem~\ref{thm:U2} with $r,r'\ne0$, the minimum in \eqref{eq:U2formula} equals $\frac1{4(2m+1)}$ with $m=m(x,y)=\max(m_1,m_3)$ as in \eqref{eq:m}, where $m_1,m_3$ are integers. Consequently $\ML(V(x,y))=m/(4m+2)$.
\end{lemma}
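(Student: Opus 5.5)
The plan is to pair the four terms of \eqref{eq:U2formula} into two blocks, $\min\{\frac{r}{4(x+y)},\frac{4-r}{4(x+3y)}\}$ from $\Delta_1$ and $\min\{\frac{r'}{4(2x+y)},\frac{4-r'}{4(2x+3y)}\}$ from $\Delta_3$. We show that the first block equals $\frac1{4(2m_1+1)}$ and the second equals $\frac1{4(2m_3+1)}$. The overall minimum is then $\frac1{4(2\max(m_1,m_3)+1)}$. Substituting into $\ML=\frac14-\frac1{4(2m+1)}=\frac{2m}{4(2m+1)}=\frac m{4m+2}$ gives the last claim.

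For the first block we split by $r\in\{1,2,3\}$.

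\emph{Case $r=1$.} We compare $\frac1{x+y}$ with $\frac3{x+3y}$. Since $3(x+y)\ge x+3y$, the minimum is $\frac1{4(x+y)}$, so $2m_1+1=x+y$. This gives $m_1=\frac{x+y-1}2$. It is an integer because $y-x\equiv1\pmod4$ forces $x+y$ odd.

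\emph{Case $r=3$.} We compare $\frac3{x+y}$ with $\frac1{x+3y}$. Since $x+y\le 3(x+3y)$, the minimum is $\frac1{4(x+3y)}$, giving $m_1=\frac{x+3y-1}2$. This is an integer since $x+3y\equiv x-y\equiv 1\pmod 2$, as $y-x$ is odd.

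\emph{Case $r=2$.} We compare $\frac2{x+y}$ with $\frac2{x+3y}$. The minimum is $\frac2{4(x+3y)}=\frac1{4\cdot\frac{x+3y}2}$, so $2m_1+1=\frac{x+3y}2$, i.e.\ $m_1=\frac{x+3y-2}4$. For integrality: $y-x\equiv2\pmod4$ and $\gcd(x,y)=1$ force $x,y$ both odd. Then $x+3y=(y-x)+(2x+2y)\cdots$; more cleanly, $x+3y\equiv x-y+4y\equiv -2+4y\equiv 2\pmod 4$, so $\frac{x+3y}2$ is odd and $m_1\in\Z$.

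The second block is identical with $x$ replaced by $2x$, using $r'=(y-2x)\bmod 4$. Here the comparisons $\frac1{2x+y}$ versus $\frac3{2x+3y}$, etc., go the same way by positivity. The parity arguments use $y-2x$ in place of $y-x$. In the case $r'=2$, $y$ is even and $y-2x\equiv2\pmod4$ gives $2x+3y\equiv2\pmod4$.

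The only point requiring care is integrality and the correct choice of branch in each case. The rest is inequalities that hold trivially for positive $x,y$, so I expect no real obstacle. The main thing to check is that the case $r=2$ indeed gives an odd half-sum, which is where the coprimality hypothesis enters.
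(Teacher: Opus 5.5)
Your proposal is correct and follows essentially the same route as the paper. Both pair the four terms by the centres $c_1$ and $c_3$, pick the smaller term in each residue class by the same elementary inequalities, check by congruences that $2m_1+1$ and $2m_3+1$ are odd integers, and take $m=\max(m_1,m_3)$.

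One small correction: coprimality is not where the $r=2$ parity comes from. The identity $x+3y=(x-y)+4y\equiv 2\pmod 4$ follows from the congruence alone, as your own ``more cleanly'' computation shows and as in the paper.
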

\begin{proof}
Compare the two terms coming from $c_1$. If $r=1$, then $\frac1{4(x+y)}\le\frac3{4(x+3y)}$ since $x+3y\le 3x+3y$. If $r=3$, then $\frac1{4(x+3y)}\le\frac3{4(x+y)}$. If $r=2$, then $\frac2{4(x+3y)}\le\frac2{4(x+y)}$. So the smaller of the two is $\frac1{4(2m_1+1)}$ with $2m_1+1=x+y$, $\frac{x+3y}2$, $x+3y$ for $r=1,2,3$. These are odd integers: $y\equiv x+1\pmod4$ gives $x+y\equiv 2x+1\pmod 4$, odd. Next, $y\equiv x+2$ gives $x+3y\equiv 4x+6\equiv2\pmod4$, so $\frac{x+3y}2$ is odd. Finally $y\equiv x+3$ gives $x+3y\equiv 4x+9$, odd. The same comparison for $c_3$ yields $2m_3+1=2x+y$, $\frac{2x+3y}2$, $2x+3y$ for $r'=1,2,3$, odd by $y\equiv 2x+1$, $y\equiv2x+2$ (then $2x+3y\equiv 8x+6\equiv 2\pmod 4$) and $y\equiv 2x+3$. The minimum of the four terms is $\frac1{4(2m+1)}$ with $m=\max(m_1,m_3)$, and $\frac14-\frac1{4(2m+1)}=\frac{m}{4m+2}$.
\end{proof}

\begin{proof}[Proof of Theorem~\ref{thm:values}]
By Theorem~\ref{thm:main}, every value below $\frac14$ is $k/(4k+1)=2k/(4\cdot2k+2)$, or $m(x,y)/(4m(x,y)+2)$, or $4/17=8/34$. All are of the form $m/(4m+2)$ with $m\ge 1$. We show $m\ge2$, $m\ne 3$, and that every other $m$ occurs.

\emph{$m\ge2$.} The value $1/6$ would contradict $\ML\ge\frac15$, which is the Lonely Runner Conjecture for four speeds \cite{CusickPomerance,Bienia}. Directly: $m_1=1$ requires $r=1$ and $x+y=3$, or $r=2$ and $x+3y=6$, so $(x,y)\in\{(1,2),(3,1)\}$. For $(1,2)$ one has $r'=0$, and $(3,1)$ has $r'=3$ and $m_3=4$. And $m_3=1$ requires $r'=1$ and $2x+y=3$, which forces $x=y=1$, excluded, or $r'\in\{2,3\}$ and $2x+3y\in\{6,3\}$, which has no solution with $x,y\ge1$ coprime.

\emph{$m\ne3$.} Suppose $m(x,y)=3$. If $m_1=3$ then either $r=1$ and $x+y=7$ with $y\equiv x+1\pmod 4$, so $(x,y)\in\{(1,6),(3,4),(5,2)\}$, or $r=3$ and $x+3y=7$ with $y\equiv x+3$, which has no solution with $x,y\ge1$, or $r=2$ and $x+3y=14$ with $y\equiv x+2$, so $(x,y)\in\{(5,3),(11,1)\}$ after discarding $(2,4)$ and $(8,2)$, which are not coprime. For $(1,6)$ and $(5,2)$ one has $r'=0$, while $(3,4)$, $(5,3)$, $(11,1)$ give $m_3=4,6,12$, all larger than $3$. If $m_3=3$ then either $r'=1$ and $2x+y=7$ with $y\equiv 2x+1$, which fails for $(1,5),(2,3),(3,1)$, or $r'=3$ and $2x+3y=7$ with $y\equiv 2x+3$, which fails for $(2,1)$, or $r'=2$ and $2x+3y=14$ with $y\equiv 2x+2$, which leaves $(1,4)$ after discarding $(4,2)$, and then $r=3$ gives $m_1=6>3$. So $m=3$ never occurs.

\emph{Attainment.} Even $m=2k$ occurs for $\{1,2,3,4k\}$. For $(x,y)=(1,7)$ one has $r=2$, $m_1=5$, $r'=1$, $m_3=4$, so $\ML(1,7,8,15)=5/22$. For $(x,y)=(4s+3,8)$, $s\ge0$, one has $r=1$, $m_1=2s+5$, $r'=2$, $m_3=2s+7$, so $\ML(V(4s+3,8))=(2s+7)/(8s+30)$, which is the family of Fan and Sun and covers every odd $m\ge7$.
\end{proof}

\begin{proof}[Proof of Corollary~\ref{cor:consequences}]
(i) $m/(4m+2)$ equals $s/(4s+1)$ with $s=m/2$ if $m$ is even, and $s/(4s+2)$ with $s=m$ if $m$ is odd. (ii) $\ML=\frac15$ means $m=2$. In family (a) this is $k=1$. In family (b), $m_1=2$ requires $r=1$ and $x+y=5$, giving $(2,3)$ and $(4,1)$, or $r=3$ and $x+3y=5$, giving $(2,1)$, or $r=2$ and $x+3y=10$, giving $(1,3)$ and $(7,1)$. Similarly $m_3=2$ requires $r'=1$ and $2x+y=5$, giving $(1,3)$ and $(2,1)$, or $r'=3$ and $2x+3y=5$, giving $(1,1)$, excluded, or $r'=2$ and $2x+3y=10$, giving $(2,2)$, excluded. Among these, $m(x,y)=2$ holds exactly for $(2,1)$ and $(1,3)$, since $(2,3)$, $(4,1)$, $(7,1)$ have $m_3=6,4,8$. These give $\{1,2,3,4\}$ and $\{1,3,4,7\}$. (iii) The next value is $2/9$.
\end{proof}

\section{Two-dimensional subtori below one third}\label{sec:second}
The list of Section~\ref{sec:two} stops at $5/16$ because the circle argument needs no more. The same tools determine every coordinate-free two-dimensional subtorus with loneliness below $1/3$, and the answer has a clean form: the values are exactly the three-runner values $s/(3s+1)$ of \cite[Theorems~5.1 and 7.2]{Kravitz}, and the subtori are products with a three-runner circle or members of one explicit two-parameter family. Write
\[
 F(n_1,n_2)=\{(a,\ b,\ a+b,\ n_1a+n_2b):(a,b)\in\T^2\},
\]
for $(n_1,n_2)\in\Z^2\setminus\{0\}$, a coordinate-free two-dimensional subtorus of $\Wtwo$, and for a triple $(a,b,c)$ of positive integers write $T_{(a,b,c)}\times\T=\{(at,bt,ct,u)\}$, a product subtorus with $\ML(T_{(a,b,c)}\times\T)=\ML(a,b,c)$. Note that $F(1,2)=\Utwo$, that $(1,3,4,14)$ lies on $F(2,4)$, and that $\Uone$ is the product $T_{(1,2,3)}\times\T$ up to symmetry.

\begin{lemma}\label{lem:third-reduce}
Let $U$ be a coordinate-free two-dimensional subtorus with $\ML(U)<1/3$. Then either $\covol(U)<56.25$, or $U$ lies, up to symmetry, in $\Wone$ or in $\Wtwo$.
\end{lemma}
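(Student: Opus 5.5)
This is the first half of Lemma~\ref{lem:reduce}, with the threshold $\frac5{16}$ replaced by $\frac13$. Put $A=\covol(U)$. Lemma~\ref{lem:density} applied to $S=U$ with $k=2$ gives a three-dimensional subtorus $W\supseteq U$ with
\[
 \ML(U)\ \ge\ \ML(W)-\tfrac12A^{-1/2}.
\]
Since $W$ contains the coordinate-free subtorus $U$, it is coordinate-free. We now split according to the size of $\ML(W)$.

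\emph{Case $\ML(W)\ge\frac25$.} From $\ML(U)<\frac13$ we get
\[
 \tfrac12A^{-1/2}\ >\ \tfrac25-\tfrac13=\tfrac1{15},
\]
hence $A^{1/2}<\frac{15}{2}$ and $A<56.25$. This is the first alternative. The strict inequality comes from the hypothesis $\ML(U)<\frac13$ being strict.

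\emph{Case $\ML(W)<\frac25$.} Lemma~\ref{lem:W25} applies to the coordinate-free three-dimensional subtorus $W$. It says that, up to a signed coordinate permutation, $W=\Wone$ or $W=\Wtwo$. Applying that same symmetry to $U$ gives $U\subseteq\Wone$ or $U\subseteq\Wtwo$, which is the second alternative.

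No step is a real obstacle. The only points to check are that coordinate-freeness passes from $U$ to $W$, which is noted in the preliminaries, and that the arithmetic gives $(15/2)^2=56.25$ with the correct strictness. Unlike Lemma~\ref{lem:reduce}, this statement claims no bound on $\eucl n$ inside $\Wone$ or $\Wtwo$. Indeed, the bound $\frac13-1/\eucl n$ used there gives nothing at the level $\frac13$, and this is why the statement leaves those subtori unbounded, to be treated separately later.
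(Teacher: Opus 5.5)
Your proof is correct and is the same as the paper's. Lemma~\ref{lem:density} with $k=2$ gives $W\supseteq U$, and then either Lemma~\ref{lem:W25} identifies $W$ as $\Wone$ or $\Wtwo$ up to symmetry, or $\ML(W)\ge\frac25$ forces $\frac12A^{-1/2}>\frac1{15}$, hence $A<(15/2)^2$. Your remark that the $\eucl n$ bound of Lemma~\ref{lem:reduce} degenerates at the level $\frac13$ is also accurate, and the paper treats those subtori separately in Lemmas~\ref{lem:W1sub} and~\ref{lem:W2sub}.
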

\begin{proof}
As in the proof of Lemma~\ref{lem:reduce}: Lemma~\ref{lem:density} gives $W\supseteq U$ with $\ML(U)\ge\ML(W)-\frac12A^{-1/2}$, and if $W$ is not special then $\frac12A^{-1/2}>\frac25-\frac13=\frac1{15}$, so $A<(15/2)^2$.
\end{proof}

\begin{lemma}[Subtori of $\Wone$]\label{lem:W1sub}
Let $U=U_n\subseteq\Wone$ be coordinate-free, with $n=(n_1,n_2,n_3)\in\Z^3$ primitive in the coordinates $(y,z,u)$ of $\Wone=\{(2y,y,z,u)\}$. If $(n_2,n_3)\in\{(\pm1,0),(0,\pm1)\}$ then $U$ is the product $T_{(1,2,|n_1|)}\times\T$ up to symmetry, so $\ML(U)=\ML(1,2,|n_1|)$, which equals $s/(3s+1)$ if $|n_1|=3s$ and $1/3$ otherwise. In every other case $\ML(U)=1/3$.
\end{lemma}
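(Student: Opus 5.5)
The upper bound $\ML(U)\le 1/3$ is automatic, since $U\subseteq\Wone$ and $\ML(\Wone)=1/3$. So the real work is a lower bound, and we split according to $(n_2,n_3)$.

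\emph{Product case.} Suppose $(n_2,n_3)=(0,\pm1)$. Then $U=\{(2y,y,z,u):n_1y\pm u\in\Z\}$, so $u=\mp n_1y$ and $z$ is free. Thus $U=\{(2y,y,z,\mp n_1y)\}$, which up to symmetry is $T_{(1,2,|n_1|)}\times\T$. Coordinate-freeness forces $n_1\ne0$. The case $(\pm1,0)$ is the same with $z$ and $u$ exchanged. For $n_1\in\{1,2\}$ the set $\{1,2,|n_1|\}$ has a repeated speed, so the loneliness is that of $\{1,2\}$, namely $1/3$. Otherwise we quote Kravitz's three-runner classification \cite[Theorems~5.1 and 7.2]{Kravitz}: for $\{1,2,c\}$ with $c\ge3$ the value is $s/(3s+1)$ if $c=3s$, and $1/3$ otherwise. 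To keep things self-contained, one could check directly that $t=1/3$ gives $\norm{ct}=1/3$ when $3\nmid c$, and that for $c=3s$ the maximum is attained at a corner point in the sense of Lemma~\ref{lem:corner}.

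\emph{Generic case.} Here $(n_2,n_3)$ is neither $(\pm1,0)$ nor $(0,\pm1)$, and we exhibit a point of $U$ with $G_1\ge1/3$. We fix $y=1/3$ or $y=2/3$ (or more generally $y\in[1/3,2/3]$ with $\norm{2y}\ge1/3$, i.e.\ $y\in\{1/3,2/3\}$ or nearby depending). With $y=1/3$ we need $z,u\in[1/3,2/3]$ satisfying
\[
 n_2z+n_3u\in -\tfrac{n_1}{3}+\Z.
\]
As $(z,u)$ ranges over $[1/3,2/3]^2$, the quantity $n_2z+n_3u$ sweeps an interval of length $(|n_2|+|n_3|)/3$. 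If $|n_2|+|n_3|\ge3$ this interval has length at least $1$ and so contains a point of every coset, and we are done.

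The remaining cases are $(n_2,n_3)=(0,0)$, which is excluded by coordinate-freeness because it forces $y\in\frac{1}{n_1}\Z$ and hence, after primitivity, $n_1=\pm1$ and $y=0$; then $\{\pm1\}^2$, $\{\pm2\}\times\{0\}$, and $\{0\}\times\{\pm2\}$. For $(\pm2,0)$, primitivity gives $n_1$ odd. Taking $z=1/2$ gives $n_2z\in\Z$, so we need $n_1y\in\Z$ with $\norm{y},\norm{2y}\ge1/3$; if $3\mid n_1$ then $y=1/3$ works. Otherwise we choose $z$ near $1/2$: the interval $2z\in[2/3,4/3]$ has length $2/3$, so we need $-n_1y$ modulo $1$ to land in it for some $y\in\{1/3,2/3\}$, and the two values $\mp n_1/3$ cover $\{1/3,2/3\}$, one of which lies in $[2/3,4/3]$ mod $1$. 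For $(\pm1,\pm1)$ we need $\pm z\pm u\equiv -n_1y$, and the set of $\pm z\pm u$ with $z,u\in[1/3,2/3]$ covers an interval of length $2/3$ around $0$ or around $1$, so again one of $y=1/3,2/3$ works.

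The main obstacle is this handful of small $(n_2,n_3)$. Each reduces to a mod-$1$ covering check with $y\in\{1/3,2/3\}$, so I expect it to be short but fiddly about sign and parity conditions.
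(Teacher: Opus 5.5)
Your proof is correct and follows the paper's argument: you get the upper bound from $\ML(\Wone)=\frac13$, handle the product cases via Kravitz's three-runner values, and exhibit a point with $y=\frac13$ and $(z,u)\in[\frac13,\frac23]^2$ by the interval-length argument, checking the small cases $(\pm1,\pm1)$, $(\pm2,0)$, $(0,\pm2)$ by hand. Your split of $(\pm2,0)$ by parity and divisibility of $n_1$ is unnecessary, and $(0,\pm2)$, which you list but do not treat, follows by exchanging $z$ and $u$. In all of these cases the sweep set is $\pm[\frac23,\frac43]$ or $\pm[-\frac13,\frac13]$, which contains representatives of all three classes $0,\frac13,\frac23 \pmod 1$, so $y=\frac13$ always works.
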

\begin{proof}
Always $\ML(U)\le\ML(\Wone)=1/3$. The case $(n_2,n_3)=(0,0)$ forces $n=\pm e_1$ and $y\in\Z$, which is not coordinate-free. If $(n_2,n_3)=(\pm1,0)$ then $z=\mp n_1y$ on $U$ and $u$ is free, so $U=\{(2y,y,\mp n_1y,u)\}$ is the stated product ($n_1\ne0$ since $U$ is coordinate-free). The circle $(2y,y,n_1y)$ has speeds $\{1,2,|n_1|\}$, and $\ML(1,2,k)=1/3$ for $3\nmid k$ by the time $t=1/3$ and the bound $\ML(1,2)=1/3$, while $\ML(1,2,3s)=s/(3s+1)$ by \cite[Theorem~5.1]{Kravitz}. The case $(0,\pm1)$ is the same with $z,u$ exchanged. In the remaining cases we exhibit a point of $U$ with $G=1/3$: take $y=1/3$ and $(z,u)\in[\frac13,\frac23]^2$ with $n_2z+n_3u\equiv-n_1/3\pmod 1$. The set $\{n_2z+n_3u:z,u\in[\frac13,\frac23]\}$ is an interval of length $(|n_2|+|n_3|)/3$. If $|n_2|+|n_3|\ge3$ it contains every residue class. If $(n_2,n_3)=(\pm1,\pm1)$ or $(\pm2,0)$ or $(0,\pm2)$ it is, up to sign, $[\frac23,\frac43]$ or $[-\frac13,\frac13]$, and both contain a representative of each of the classes $0,\frac13,\frac23\pmod1$, which are the only possible values of $-n_1/3$.
\end{proof}

\begin{lemma}[Sum triangle]\label{lem:sumcells}
For $0<d\le\frac13$,
\[
 \{(a,b)\in[0,1)^2:\norm a,\norm b,\norm{a+b}\ge d\}=\Delta(d)\cup\bigl((1,1)-\Delta(d)\bigr),
\]
where $\Delta(d)=\{a\ge d,\ b\ge d,\ a+b\le 1-d\}$. With $d=\frac13-\varepsilon$,
\[
 \Delta(d)=\bigl(\tfrac13,\tfrac13\bigr)+\varepsilon\operatorname{conv}\{(-1,-1),(-1,2),(2,-1)\}.
\]
\end{lemma}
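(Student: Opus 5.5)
The proof will follow the pattern of Lemma~\ref{lem:U2cells}, only with three constraints in place of four. For $(a,b)\in[0,1)^2$ we have $a,b\in[0,1)$ and $a+b\in[0,2)$. So the conditions $\norm a\ge d$, $\norm b\ge d$, $\norm{a+b}\ge d$ become
\[
 d\le a\le 1-d,\qquad d\le b\le 1-d,\qquad k+d\le a+b\le k+1-d
\]
for some $k\in\{0,1\}$.

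First take $k=0$. The three inequalities $a\ge d$, $b\ge d$, $a+b\le 1-d$ imply all the others. Indeed $a\le 1-d-b\le 1-2d\le 1-d$, similarly $b\le 1-d$, and $a+b\ge 2d\ge d$. So this case gives exactly $\Delta(d)$.

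Next take $k=1$. The substitution $(a,b)\mapsto(1-a,1-b)$ preserves $\norm a,\norm b,\norm{a+b}$. On the relevant range it turns the conditions $a\le 1-d$, $b\le 1-d$, $a+b\ge 1+d$ into $a'\ge d$, $b'\ge d$, $a'+b'\le 1-d$. So this case gives $(1,1)-\Delta(d)$. A small point needs care here: $(1,1)-\Delta(d)$ should lie in $[0,1)^2$. This holds because every point of $\Delta(d)$ has positive coordinates when $d>0$.

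For disjointness, note that points of $\Delta(d)$ have $a+b\le 1-d<1$ and points of the reflected triangle have $a+b\ge 1+d>1$. The union is also the whole set, since every solution has $a+b$ in one of the two ranges.

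Finally, compute the vertices of $\Delta(d)$. They are $(d,d)$, $(d,1-2d)$ and $(1-2d,d)$. Substituting $d=\frac13-\varepsilon$ gives $(\frac13,\frac13)+\varepsilon(-1,-1)$, $(\frac13,\frac13)+\varepsilon(-1,2)$ and $(\frac13,\frac13)+\varepsilon(2,-1)$. This is the stated convex hull form. The hypothesis $d\le\frac13$ is used exactly to make $\Delta(d)$ nonempty, with $\varepsilon\ge0$.

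There is no real obstacle. The only step needing care is confirming that the $k=0$ triangle's three inequalities imply the remaining ones, which is the short check given above.
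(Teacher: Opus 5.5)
Your proposal is correct and follows the paper's proof: split according to whether $a+b$ lies in $[d,1-d]$ or $[1+d,2-d]$, observe that $a,b\ge d$ and $a+b\le 1-d$ already imply the remaining constraints, obtain the second piece from the symmetry $(a,b)\mapsto(1-a,1-b)$, and read off the vertices $(d,d)$, $(d,1-2d)$, $(1-2d,d)$. Your extra checks, that the reflected triangle stays in $[0,1)^2$ and that the two pieces are disjoint, are harmless details the paper leaves implicit.
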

\begin{proof}
The constraints are $a,b\in[d,1-d]$ and $a+b\in[d,1-d]\cup[1+d,2-d]$. In the first case $a+b\le1-d$ together with $a,b\ge d$ already implies $a,b\le 1-2d$ and $a+b\ge 2d$, which gives $\Delta(d)$. The second case is its image under $(a,b)\mapsto(1-a,1-b)$. The vertices of $\Delta(d)$ are $(d,d)$, $(d,1-2d)$ and $(1-2d,d)$.
\end{proof}

\begin{lemma}[Subtori of $\Wtwo$]\label{lem:W2sub}
Let $U=U_n\subseteq\Wtwo$ be coordinate-free, with $n=(n_1,n_2,n_3)$ primitive in the coordinates $(a,b,u)$ of $\Wtwo=\{(a,b,a+b,u)\}$.
\begin{enumerate}[label=\textup{(\roman*)},leftmargin=*]
\item If $|n_3|\ge2$ then $\ML(U)=1/3$.
\item If $n_3=0$ then $U=T_{(|n_2|,|n_1|,|n_1-n_2|)}\times\T$ up to symmetry, and $\ML(U)$ is the loneliness of that triple.
\item If $|n_3|=1$ then $\ML(U)=\ML(F(n_1,n_2))$, and
\[
 \ML(F(n_1,n_2))=\begin{cases}\dfrac13,&n_1+n_2\not\equiv0\pmod3,\\[8pt]\dfrac{M}{3(M+1)}=\dfrac{s}{3s+1},&n_1+n_2\equiv0\pmod 3,\end{cases}
\]
where in the second case $M=3s=\max\bigl(|n_1+n_2|,\,|2n_1-n_2|,\,|2n_2-n_1|\bigr)$.
\end{enumerate}
\end{lemma}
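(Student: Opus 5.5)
Throughout, $U=U_n=\{(a,b,a+b,u): n_1a+n_2b+n_3u\in\Z\}$, and $\ML(U)\le\ML(\Wtwo)=\frac13$. The three cases are handled separately; the Sum triangle lemma (Lemma~\ref{lem:sumcells}) and Lemma~\ref{lem:orbit} do most of the work.

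\emph{Case (i), $|n_3|\ge2$.} I aim for a point with $G=\frac13$. Take $(a,b)=(\frac13,\frac13)$, so $\norm a=\norm b=\norm{a+b}=\frac13$. We then need $u\in[\frac13,\frac23]$ with $n_3u\equiv-(n_1+n_2)/3\pmod1$. The set $\{n_3u: u\in[\frac13,\frac23]\}$ is an interval of length $|n_3|/3\ge\frac23$. For $|n_3|\ge3$ it covers every residue class. For $|n_3|=2$ it is $\pm[\frac23,\frac43]$, which contains representatives of $0,\frac13,\frac23$. This is exactly the argument of Lemma~\ref{lem:W1sub}.

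\emph{Case (ii), $n_3=0$.} Here $u$ is free and $(a,b)$ satisfies $n_1a+n_2b\in\Z$ with $\gcd(n_1,n_2)=1$. By Lemma~\ref{lem:orbit} this is the circle $(a,b)=(n_2t,-n_1t)$, whose third coordinate is $(n_2-n_1)t$. Hence $U=T_{(|n_2|,|n_1|,|n_1-n_2|)}\times\T$ up to sign changes. Coordinate-freeness forces $n_1,n_2\ne0$ and $n_1\ne n_2$.

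\emph{Case (iii), $n_3=\pm1$.} Solving gives $u=\mp(n_1a+n_2b)$, and negating the fourth coordinate turns $U$ into $F(n_1,n_2)$ up to symmetry. So $\ML(F)=\sup\{d: \norm{n_1a+n_2b}\ge d \text{ for some } (a,b)\in S_d\}$, where by Lemma~\ref{lem:sumcells} $S_d=\Delta(d)\cup((1,1)-\Delta(d))$. Put $\psi(a,b)=n_1a+n_2b$; since $\psi$ is odd, the second triangle gives the same values of $\norm{\psi}$ as the first, so only $\Delta(d)$ matters. At $d=\frac13$ the triangle is the single point $(\frac13,\frac13)$, where $\psi=(n_1+n_2)/3$. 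If $3\nmid n_1+n_2$ then $\norm\psi\ge\frac13$ there, giving $\ML=\frac13$.

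If $3\mid n_1+n_2$, write $d=\frac13-\varepsilon$. The image $\psi(\Delta(d))$ is an interval containing an integer $c=(n_1+n_2)/3$ at its centre point. Its vertex offsets are $\varepsilon(-n_1-n_2)$, $\varepsilon(2n_2-n_1)$ and $\varepsilon(2n_1-n_2)$. These three sum to $0$, so one end of the interval is $c-\varepsilon M_-$ and the other is $c+\varepsilon M_+$, where $M_\pm$ are the largest positive and negative offsets (up to the factor $\varepsilon$). We need a point with $\norm\psi\ge d$, meaning $\psi\in[k+d,k+1-d]$ for an integer $k$. Because $\Delta(d)$ is connected, the first contact as $\varepsilon$ grows is when $\varepsilon M_+\ge d$ or $\varepsilon M_-\ge d$. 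So the threshold is $\varepsilon M=\frac13-\varepsilon$, with $M=\max(|n_1+n_2|,|2n_1-n_2|,|2n_2-n_1|)$; this works because the maximal absolute offset equals the larger of $M_\pm$. This gives $\varepsilon=\frac1{3(M+1)}$ and $\ML=\frac13-\frac1{3(M+1)}=\frac{M}{3(M+1)}$. All three offsets are $\equiv0\pmod3$ when $3\mid n_1+n_2$, so $M=3s$ and the value is $s/(3s+1)$.

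\emph{Main obstacle.} The delicate step is the first-contact argument. One must check that before the interval reaches $c\pm d$, no other residue window $[k+d,k+1-d]$ is hit. This holds because the interval stays within distance $<d$ of $c$ until that moment, and the nearest forbidden-free window starts at $c+d$. One must also confirm $\varepsilon<\frac13$, so that Lemma~\ref{lem:sumcells} applies; $M\ge1$ gives this, and $M\ge3$ in the divisible case.
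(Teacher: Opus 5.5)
Your proposal is correct and follows the paper's proof essentially step for step. For (i) you use the point $(\frac13,\frac13,\frac23,u)$, and for (ii) Lemma~\ref{lem:orbit}; for (iii) you reduce to $F(n_1,n_2)$, describe the superlevel set by Lemma~\ref{lem:sumcells}, use the symmetry $(a,b)\mapsto(1,1)-(a,b)$, and obtain the first-contact condition $\varepsilon M=\frac13-\varepsilon$ from the vertex values of $\psi$, exactly as the paper does.
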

\begin{proof}
Always $\ML(U)\le\ML(\Wtwo)=\frac13$, and $G_2(\frac13,\frac13,u)=\frac13$ for every $u\in[\frac13,\frac23]$.

(i) We need $u\in[\frac13,\frac23]$ with $n_3u\equiv-(n_1+n_2)/3\pmod1$. The set $\{n_3u\}$ is an interval of length $|n_3|/3$, which contains every class if $|n_3|\ge3$, and for $|n_3|=2$ it is $\pm[\frac23,\frac43]$, which meets each of the classes $0,\frac13,\frac23$.

(ii) Since $n$ is primitive, $\gcd(n_1,n_2)=1$, and by Lemma~\ref{lem:orbit} the set $\{(a,b):n_1a+n_2b\in\Z\}$ is the circle $\{(n_2t,-n_1t)\}$. Hence $U=\{(n_2t,-n_1t,(n_2-n_1)t,u)\}$, with $n_1,n_2\ne0$ and $n_1\ne n_2$ because $U$ is coordinate-free.

(iii) On $U$ the coordinate $u$ equals $\mp(n_1a+n_2b)$, and $G$ is invariant under changing the sign of a coordinate, so $\ML(U)=\ML(F(n_1,n_2))$. Put $\ell(a,b)=n_1a+n_2b$ and $d=\frac13-\varepsilon$. By Lemma~\ref{lem:sumcells}, $F(n_1,n_2)$ has a point with $G\ge d$ exactly when some point of $\Delta(d)\cup((1,1)-\Delta(d))$ has $\norm{\ell}\ge d$. The image of $\Delta(d)$ under $\ell$ is the interval $[\,\ell(c)-\varepsilon L,\ \ell(c)+\varepsilon H\,]$ with $c=(\frac13,\frac13)$, $\ell(c)=(n_1+n_2)/3$, and $-L$, $H$ the smallest and largest of the vertex values $-(n_1+n_2)$, $-n_1+2n_2$, $2n_1-n_2$ of $\ell$ on $\operatorname{conv}\{(-1,-1),(-1,2),(2,-1)\}$. The image of $(1,1)-\Delta(d)$ is the reflection of this interval in $\ell(1,1)/2$, which preserves distances to $\Z$. If $n_1+n_2\not\equiv0\pmod 3$ then $\norm{\ell(c)}=\frac13$ and $\varepsilon=0$ works. If $n_1+n_2\equiv0\pmod3$ then $\ell(c)\in\Z$, the image interval contains a point at distance at least $\frac13-\varepsilon$ from $\Z$ exactly when $\varepsilon\max(L,H)\ge\frac13-\varepsilon$, and $\max(L,H)=M$. The first contact is at $\varepsilon=1/(3(M+1))$, which lies in $(0,\frac16]$, so Lemma~\ref{lem:sumcells} applies throughout and $\ML(F(n_1,n_2))=\frac13-\frac1{3(M+1)}$. Finally $M$ is one of $|n_1+n_2|$, $|3n_1-(n_1+n_2)|$, $|3n_2-(n_1+n_2)|$, all divisible by $3$.
\end{proof}

\begin{proposition}\label{prop:third-enum}
There are $63{,}906{,}892$ coordinate-free primitive rank-two sublattices of $\Z^4$ with covolume at most $57$. Of these, $3{,}267{,}840$ are contained in a signed permutation image of $\Wone$ or $\Wtwo$, and each of the remaining $60{,}639{,}052$ has a point $(i/Q,j/Q)$, $Q\in\{3,12,60,420,16,80,208\}$, at which $G\ge1/3$. Consequently every coordinate-free two-dimensional subtorus with $\ML<1/3$ lies in $\Wone$ or $\Wtwo$ up to symmetry.
\end{proposition}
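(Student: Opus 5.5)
The plan is a finite exact enumeration patterned on Proposition~\ref{prop:list}, combined with the reduction of Lemma~\ref{lem:third-reduce}. By that lemma, every coordinate-free two-dimensional subtorus with $\ML<1/3$ either has covolume below $56.25$ or lies in $\Wone$ or $\Wtwo$ up to symmetry. So the last sentence of the proposition follows once we know that every coordinate-free sublattice of covolume at most $57$ outside the images of $\Wone$ and $\Wtwo$ carries a point with $G\ge 1/3$. The bound $57$ simply leaves a margin above $56.25$, so that the integer bound on the sum of squared minors is safe.

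First, enumerate all primitive rank-two sublattices of $\Z^4$ by their row Hermite normal form bases, as in Section~\ref{sec:two}. The pivot columns $p<q$ and the reduced entry $0\le s_q<r_q$ are fixed, and the entry bounds follow from the bound $\sum_{i<j}(s_ir_j-s_jr_i)^2\le 57^2$ on the Pl\"ucker coordinates. Keep those whose minors have $\gcd$ equal to $1$ and that are coordinate-free, meaning no coordinate vanishes on both basis rows. Completeness can be cross-checked as before, by counting primitive solutions of $p_{12}p_{34}-p_{13}p_{24}+p_{14}p_{23}=0$ for small norms.

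Second, test containment. $U$ lies in a signed-permutation image of $\Wone$ exactly when $\Lambda_U$ is orthogonal to some vector $\pm e_i\pm2e_j$. It lies in an image of $\Wtwo$ exactly when $\Lambda_U$ is orthogonal to some $\pm e_i\pm e_j\pm e_k$ with distinct indices. Both conditions are finitely many linear checks on the two basis rows, and they count the $3{,}267{,}840$ lattices to set aside.

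Third, for each remaining lattice with basis $s,r$, search the points $is/Q+jr/Q$ for $Q$ in the stated list and $0\le i,j<Q$. Accept the lattice as soon as $G\ge 1/3$ is reached, using exact rational arithmetic; with $Q=3$, a point $\pm\frac13$ in every coordinate already suffices. Each accepted point is a certificate, since $\ML(U)\ge G$ of that point.

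The main obstacle I expect is that a few lattices may not be certified by any grid point. A subtorus whose maximum of $G$ is exactly $1/3$ could attain it only at points whose coordinates in the basis have denominators outside the grid. For such leftovers, I would first enlarge $Q$ to multiples of the $2\times2$ minors. If that fails, I would run an exact maximization: by a corner-type argument, the maximum of $G$ on $U$ occurs at a vertex of the arrangement of lines $\pm(s_ia+r_ib)\equiv c$, and those vertices are rational points with denominators dividing the minors. The count of $60{,}639{,}052$ asserts that the grid alone suffices, which makes the certificate check mechanical.
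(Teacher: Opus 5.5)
Your proposal is correct and follows the paper's proof essentially step for step. Both use the Hermite normal form enumeration of Section~\ref{sec:two} with the bound raised to $57$ and cross-checked against the Pl\"ucker count, the containment test against the $80$ characters $\pm e_i\pm2e_j$ and $\pm e_i\pm e_j\pm e_k$, grid certificates for the remaining lattices, and Lemma~\ref{lem:third-reduce} for the final sentence; the fallback exact maximization you anticipate is not needed, since the stated count records that the grids alone certify all $60{,}639{,}052$ remaining lattices.
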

\begin{proof}
The enumeration is the one of Section~\ref{sec:two} with the larger bound (the count $63{,}906{,}892$ is again checked against the Pl\"ucker count for small covolumes). Containment in a special subtorus is decided by testing the characters $\pm e_i\pm2e_j$ and $\pm e_i\pm e_j\pm e_k$ ($80$ vectors, $40$ up to sign) against both basis vectors. The last sentence follows from Lemma~\ref{lem:third-reduce}.
\end{proof}

\begin{theorem}\label{thm:second}
Let $U$ be a coordinate-free two-dimensional subtorus of $\T^4$. Then $\ML(U)<1/3$ if and only if, up to symmetry, $U$ is a product $T_{(a,b,c)}\times\T$ with $\ML(a,b,c)<1/3$, or $U=F(n_1,n_2)$ with $n_1+n_2\equiv0\pmod3$. In both cases $\ML(U)=s/(3s+1)$ for an integer $s\ge1$, namely the $s$ of \cite[Theorem~7.2]{Kravitz} for the triple (every $s\ge1$ occurs, by \cite[Theorem~5.1]{Kravitz}), and $s=\frac13\max(|n_1+n_2|,|2n_1-n_2|,|2n_2-n_1|)$ for $F(n_1,n_2)$. Every value $s/(3s+1)$, $s\ge1$, occurs, for instance on $F(s,2s)$.
\end{theorem}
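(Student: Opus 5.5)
Proposition~\ref{prop:third-enum} reduces the theorem to subtori of $\Wone$ or $\Wtwo$ up to symmetry; Lemmas~\ref{lem:W1sub} and \ref{lem:W2sub} then give the classification, and the value statements follow from Kravitz's results.

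First, let $\ML(U)<1/3$. By Proposition~\ref{prop:third-enum}, after a signed coordinate permutation $U\subseteq\Wone$ or $U\subseteq\Wtwo$, so $U=U_n$ in the coordinates of $\T^3$.
\begin{itemize}
\item If $U\subseteq\Wone$, Lemma~\ref{lem:W1sub} shows that $\ML(U)<1/3$ forces $(n_2,n_3)\in\{(\pm1,0),(0,\pm1)\}$. Then $U=T_{(1,2,|n_1|)}\times\T$ with $3\mid n_1$, which is a product with a triple of loneliness below $1/3$.
\item If $U\subseteq\Wtwo$, Lemma~\ref{lem:W2sub} leaves two cases.
  \begin{itemize}
  \item If $n_3=0$, then $U$ is a product $T_{(a,b,c)}\times\T$, and $\ML(a,b,c)=\ML(U)<1/3$.
  \item If $|n_3|=1$, then on $U$ the last coordinate is $\mp(n_1a+n_2b)$. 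Changing its sign is a symmetry, so $U=F(n_1,n_2)$ up to symmetry. Lemma~\ref{lem:W2sub}(iii) then requires $n_1+n_2\equiv0\pmod 3$.
  \end{itemize}
\end{itemize}

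Conversely, a product $T_{(a,b,c)}\times\T$ has $\ML=\ML(a,b,c)$, because the free coordinate can be set to $1/2$. For $F(n_1,n_2)$ with $3\mid n_1+n_2$, Lemma~\ref{lem:W2sub}(iii) gives the value $s/(3s+1)$ with $3s=M$. Here $M\ge3$ because $M=0$ would force $n_1=n_2=0$, so $s\ge1$. One should also check that $F(n_1,n_2)$ is coordinate-free, i.e.\ $(n_1,n_2)\ne0$; this holds by definition.

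Next, the value statements. For the product case, Kravitz's classification of three-runner values below $1/3$ \cite[Theorem~7.2]{Kravitz} says $\ML(a,b,c)=s/(3s+1)$ for some $s\ge1$. Every such value occurs on $F(s,2s)$: there $n_1+n_2=3s$, $|2n_1-n_2|=0$ and $|2n_2-n_1|=3s$, so $M=3s$.

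There is no real obstacle, since everything reduces to the cited lemmas and the computational Proposition~\ref{prop:third-enum}. The only point needing care is the symmetry bookkeeping: the reduction to $\Wone$ or $\Wtwo$ and the sign change of $u$ must stay within the signed coordinate permutation group, which holds because that group preserves $\ML$.
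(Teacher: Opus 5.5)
Your proposal is correct and follows the paper's proof essentially step for step. Proposition~\ref{prop:third-enum} places $U$ in $\Wone$ or $\Wtwo$, Lemmas~\ref{lem:W1sub} and~\ref{lem:W2sub} leave exactly the products and the $F(n_1,n_2)$ with $3\mid n_1+n_2$, and Kravitz's theorems together with $M=3s$ for $F(s,2s)$ give the values. The only point you leave implicit is the parenthetical claim that every $s\ge1$ already occurs for triples, which is \cite[Theorem~5.1]{Kravitz} applied to $(1,2,3s)$.
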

\begin{proof}
The subtori listed have the stated values by Lemma~\ref{lem:W2sub} and by \cite[Theorems~5.1 and 7.2]{Kravitz}, which together say that the values of $\ML$ on triples below $1/3$ are exactly the numbers $s/(3s+1)$, each attained by $(1,2,3s)$. Conversely, if $\ML(U)<1/3$, Proposition~\ref{prop:third-enum} places $U$ in $\Wone$ or $\Wtwo$ up to symmetry, and Lemmas~\ref{lem:W1sub} and \ref{lem:W2sub} leave only products with $\ML<1/3$ and $F(n_1,n_2)$ with $3\mid n_1+n_2$. For $F(s,2s)$ one has $M=3s$.
\end{proof}

A by-product is a structural statement about three speeds: a triple $(a,b,c)$ with $\ML(a,b,c)<1/3$ satisfies, up to order, $c=2b$ or $c=a+b$ after dividing by the common divisor (and in the first case the triple is $\{1,2,3s\}$ by Lemma~\ref{lem:W1sub}), since otherwise $T_{(a,b,c)}\times\T$ would be a subtorus with $\ML<1/3$ lying in no special three-dimensional subtorus, and Proposition~\ref{prop:third-enum} together with Lemma~\ref{lem:third-reduce} excludes this.

\begin{corollary}\label{cor:acc}
The set of accumulation points of the set of values $\ML(V)$, over all quadruples $V$ of distinct positive integers, that lie in $(0,\frac13)$ is $\{s/(3s+1):s\ge1\}$, the three-runner spectrum below $\frac13$.
\end{corollary}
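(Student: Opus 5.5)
The corollary asserts two inclusions, and I will prove them separately.

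\emph{Every $s/(3s+1)$ is an accumulation point.} I want an infinite family of distinct quadruples whose loneliness converges to $s/(3s+1)$ from the side where the values differ. Take the product subtorus $T_{(1,2,3s)}\times\T$, which has loneliness $s/(3s+1)$ by Theorem~\ref{thm:second}. Inside it, consider the circles $v_N=(N,2N,3sN,1)$ up to order. Since $\gcd=1$ and $\eucl{v_N}\to\infty$, Lemma~\ref{lem:strip} applies with the covolume $A$ of the product subtorus fixed. It gives
\[
\frac{s}{3s+1}\ \ge\ \ML(v_N)\ \ge\ \frac{s}{3s+1}-\frac{A}{2\eucl{v_N}}\ \longrightarrow\ \frac{s}{3s+1}.
\]
I also need infinitely many distinct values, not just convergence. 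Here I would use that the values $\ML(v_N)$ are rationals with denominators bounded by the corner lemma. It is simpler, though, to note that if $\ML(v_N)$ equalled $s/(3s+1)$ for all large $N$, the limit would still be that value but not as an accumulation point of distinct values. So I choose a second family instead, $F(s,2s)$ with circles $(x,y,x+y,sx+2sy)$. By Lemma~\ref{lem:W2sub}(iii) and the triangle analysis, its values can be computed like those in Theorem~\ref{thm:U2}, and they approach $s/(3s+1)$ strictly from below. Concretely, I repeat the argument of Theorem~\ref{thm:U2} with the triangle of Lemma~\ref{lem:sumcells} and the extra constraint $\norm{\ell}\ge d$. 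This gives a formula of the form $\frac{s}{3s+1}-\Theta(1/(x+y))$, which is strictly below the limit and tends to it. That yields infinitely many distinct values.

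\emph{Nothing else in $(0,\tfrac13)$ is an accumulation point.} Let $\alpha\in(0,\tfrac13)$ be a limit of distinct values $\ML(v^{(j)})$. Distinct values force $\eucl{v^{(j)}}\to\infty$, since there are only finitely many primitive quadruples of bounded length. By Lemma~\ref{lem:density} with $k=1$, each $T_{v^{(j)}}$ lies in a coordinate-free two-dimensional subtorus $U_j$ with
\[
\ML(U_j)\le \ML(v^{(j)})+\tfrac12\eucl{v^{(j)}}^{-1/3}.
\]
Hence $\ML(U_j)<\tfrac13$ for large $j$, and $\limsup_j \ML(U_j)\le\alpha$. Since $T_{v^{(j)}}\subseteq U_j$, we also have $\ML(U_j)\ge\ML(v^{(j)})$, so $\ML(U_j)\to\alpha$. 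By Theorem~\ref{thm:second}, each $\ML(U_j)$ lies in $\{s/(3s+1)\}$. That set is discrete in $(0,\tfrac13)$, accumulating only at $\tfrac13$, so $\alpha$ equals some $s/(3s+1)$.

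The main obstacle is the first part: producing infinitely many distinct values converging to each $s/(3s+1)$. The second part follows directly from Theorem~\ref{thm:second} and Lemma~\ref{lem:density}.
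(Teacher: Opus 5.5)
Your first half has a gap. The paper does not construct anything there: it cites \cite[Theorem~1.3]{GiriKravitz}. Your replacement construction overstates what $F(s,2s)$ gives, because not every circle $(x,y,x+y,s(x+2y))$ in it has loneliness strictly below $d_0=\frac s{3s+1}$.

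From the proof of Lemma~\ref{lem:W2sub}(iii), at the level $d_0$ (that is, $\varepsilon=\frac1{3(3s+1)}$) the form $\ell=sa+2sb$ reaches $s\pm d_0$ on $\Delta(d_0)$ only at the vertices $(d_0,d_0)$ and $(d_0,1-2d_0)$. So the maximizers of $G$ on $F(s,2s)$ are exactly $\pm P_1,\pm P_2$, with $P_1=(d_0,d_0)$ and $P_2=(d_0,\frac{s+1}{3s+1})$. By Lemma~\ref{lem:orbit} the circle is $\{ya-xb\in\Z\}$. It contains $P_1$ exactly when $y\equiv x\pmod{3s+1}$, and $P_2$ exactly when $y\equiv-2x\pmod{3s+1}$ (use $s^{-1}\equiv-3$). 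In those cases its value is exactly $\frac s{3s+1}$; for $s=1$ these are the cases $r=0$ and $r'=0$ of Theorem~\ref{thm:U2}.

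So you must restrict to $\gcd(x,y)=1$, $x\ne y$ and $y\not\equiv x,-2x\pmod{3s+1}$. For instance take $x=1$ with $y\equiv2\pmod{3s+1}$ when $s\ge2$, or $y\equiv3\pmod4$ when $s=1$. With this restriction you do not need the $\Theta(1/(x+y))$ formula you left uncomputed. The circle misses every maximizer of $G$ on $F(s,2s)$, so its value is strictly below $\frac s{3s+1}$. Lemma~\ref{lem:strip}, applied with the fixed covolume $\sqrt{6s^2+3}$ of $F(s,2s)$, gives convergence, and hence infinitely many distinct values. You were right to discard your first family: $\ML(1,N,2N,3sN)=\frac s{3s+1}$ for every $N\ge3$.

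Your second half is correct and takes a different route from the paper. The paper uses \cite[Theorem~1.4]{GiriKravitz} to place every accumulation point among values of subtori of dimension at least two. It then has to reconcile that paper's convention, which allows repeated speeds, and it uses Lemma~\ref{lem:W13} to discard three-dimensional subtori. You instead apply Lemma~\ref{lem:density} with $k=1$ directly to the circles. This gives coordinate-free two-dimensional subtori $U_j$ with $\ML(v^{(j)})\le\ML(U_j)\le\ML(v^{(j)})+\frac12\eucl{v^{(j)}}^{-1/3}$. Theorem~\ref{thm:second}, together with the closedness of $\{\frac s{3s+1}:s\ge1\}\cup\{\frac13\}$, then finishes the argument. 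This is the limiting form of Proposition~\ref{prop:large}. It never leaves quadruples of distinct integers and needs no external input beyond Theorem~\ref{thm:second}. The paper's route is shorter and leans on the general theorems of Giri and Kravitz.
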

\begin{proof}
Giri and Kravitz prove \cite[Theorem~1.4]{GiriKravitz} that every accumulation point of the four-runner spectrum is the value of a subtorus of dimension at least two, and \cite[Theorem~1.3]{GiriKravitz} (from \cite{Kravitz}) that every three-runner value is an accumulation point. Two remarks reconcile the conventions. First, \cite{GiriKravitz} allows repeated speeds. A tuple with a repeated speed has the value of a set of at most three distinct speeds, and those values in $(0,\frac13)$ form the discrete set $\{s/(3s+1)\}$ by \cite[Theorems~5.1, 6.1 and 7.2]{Kravitz}, so a sequence of such values converging to a point of $(0,\frac13)$ is eventually constant, and the accumulation points in $(0,\frac13)$ are the same whether or not repeated speeds are allowed. Second, subtori contained in a coordinate hyperplane have value $0$ and three-dimensional subtori have value at least $\frac13$ by Lemma~\ref{lem:W13}, so in $(0,\frac13)$ the accumulation points lie in the set of values of coordinate-free two-dimensional subtori, which by Theorem~\ref{thm:second} is the three-runner spectrum below $\frac13$.
\end{proof}

In the notation of \cite{GiriKravitz}, with $D=\frac12-\ML$, this says $\operatorname{acc}(\mathcal S(4))\cap(\frac16,\frac12)=\mathcal S(3)\cap(\frac16,\frac12)$, which is the case $n=4$ of \cite[Conjecture~1.5]{GiriKravitz} on the interval $(\frac16,\frac12)$. The next subsection extends this to $D\ge\frac1{10}$.

\subsection{From one third to two fifths}\label{sec:upto25}
The method reaches the next accumulation point $\frac25=\ML(1,4)=\ML(2,3)$ of the three-runner spectrum. Two ingredients are new. Lemma~\ref{lem:shortchar} replaces the uniform bound $A^{-1/2}$ of Lemma~\ref{lem:density} by a bound in terms of a shortest character of the subtorus, which is much stronger when that character has a large $\ell^1$ norm. Lemma~\ref{lem:five} determines the values of all two-dimensional subtori of the five three-dimensional subtori with loneliness $\frac25$, which carry the infinitely many values that accumulate at $\frac25$. Put
\begin{align*}
 \mathcal R&=\Big\{\frac pq\in\Big[\frac13,\frac25\Big):\ \gcd(p,q)=1,\ 1\le 2q-5p\le 4\Big\}\\
 &=\Big\{\frac13,\frac6{17},\frac5{14},\frac4{11},\frac7{19},\frac{10}{27},\frac38,\frac{14}{37},\dots\Big\}.
\end{align*}
For $p/q\in\mathcal R$ with $r=2q-5p$ we have $q\equiv3r\pmod5$, so $(p,q)=(2j+1,5j+3)$, $(2j,5j+1)$, $(2j+1,5j+4)$ or $(2j,5j+2)$ for $r=1,2,3,4$. Since $\frac{2j+1}{5j+3}=\frac{2J}{5J+2}$ with $J=4j+2$ and $\frac{2j}{5j+1}=\frac{2J}{5J+2}$ with $J=2j$, and since conversely $\frac{2j}{5j+2}$ and $\frac{2j+1}{5j+4}$ lie in $[\frac13,\frac25)$ for $j\ge2$ and have $2q-5p$ dividing $4$ and $3$ in lowest terms,
\begin{equation}\label{eq:R}
 \mathcal R=\Big\{\frac{2j}{5j+2}:j\ge2\Big\}\cup\Big\{\frac{2j+1}{5j+4}:j\ge2\Big\}.
\end{equation}

\begin{theorem}\label{thm:upto25}
\begin{enumerate}[label=\textup{(\roman*)},leftmargin=*]
\item Every coordinate-free two-dimensional subtorus $U$ of $\T^4$ with $\ML(U)<\frac25$ has $\ML(U)\in\{\frac s{3s+1}:s\ge1\}\cup\mathcal R$.
\item The values of $\ML$ on triples of positive integers below $\frac25$ are exactly the numbers $\frac s{3s+1}$, $s\ge1$, and the elements of $\mathcal R$. For $j\ge2$,
\[
 \ML(2,3,5j)=\frac{2j}{5j+2},\qquad \ML(1,4,5j+3)=\frac{2j+1}{5j+4}.
\]
\end{enumerate}
\end{theorem}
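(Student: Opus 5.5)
We prove (ii) first for the two explicit families, then (i) by the same reduction that proved Theorem~\ref{thm:second}, now run up to $\frac25$, and finally deduce the rest of (ii) from (i).

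\textbf{The explicit families.} For $(2,3,5j)$ and $(1,4,5j+3)$ we proceed as in Theorem~\ref{thm:U1}. The circle $T_{(2,3)}$, respectively $T_{(1,4)}$, has loneliness $\frac25$, attained at isolated times. Take the superlevel set $\{t:\norm{2t},\norm{3t}\ge d\}$ for $d$ slightly below $\frac25$. It consists of short intervals around the maximizers $t=\pm\frac15$ (for $(1,4)$, around $t=\pm\frac25$), of length linear in $\frac25-d$. We then ask when the third speed $c$ first achieves $\norm{ct}\ge d$ on one of these intervals. This is a one-variable first-contact computation of the kind in Lemma~\ref{lem:U2cells}. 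It gives the formulas $\frac{2j}{5j+2}$ and $\frac{2j+1}{5j+4}$, the latter using the congruence $5j+3\equiv3\pmod5$.

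\textbf{Reduction for (i).} Let $U$ be coordinate-free with $\ML(U)<\frac25$. If $\ML(U)<\frac13$, Theorem~\ref{thm:second} gives the value $\frac s{3s+1}$, so assume $\ML(U)\ge\frac13$. The paper announces a refined density lemma (Lemma~\ref{lem:shortchar}). Its content should be the following: if $m$ is a shortest character of a three-dimensional subtorus $W\supseteq U$, the loss $\ML(W)-\ML(U)$ is controlled by the geometry of the complementary direction rather than by $A^{-1/2}$. Combined with Lemma~\ref{lem:W25}-type bounds, this should show that either $\covol(U)$ is bounded, or $U$ lies in a three-dimensional subtorus $W$ with $\ML(W)<\frac12$ close to $\frac25$. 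In the second case we need $\ML(W)\le\frac25$ exactly, and the relevant subtori are $\Wone$, $\Wtwo$ and the five three-dimensional subtori with value $\frac25$.

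\emph{Bounded covolume.} This is a finite enumeration with grid certificates, as in Proposition~\ref{prop:third-enum}. Every subtorus not in a special $W$ must have a grid point with $G\ge\frac25$, and the enumeration shows this.

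\emph{Subtori of $\Wone$ and $\Wtwo$.} Lemmas~\ref{lem:W1sub} and \ref{lem:W2sub} already give the values $\frac13$ or $\frac s{3s+1}$, or a product $T_{(a,b,c)}\times\T$ whose value is a triple value. So (i) for these $U$ is reduced to the triple part of (ii).

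\emph{Subtori of the five subtori with value $\frac25$.} This is Lemma~\ref{lem:five}. For each such $W$, parametrize it by $\T^3$. Compute its superlevel set for $d$ near $\frac25$ as a finite union of small polytopes around the maximizers, analogous to Lemma~\ref{lem:sumcells}. Then, for a two-dimensional $U_n=\{n\cdot p\in\Z\}$ inside $W$, compute the first-contact value as a linear program in $\varepsilon$ over the vertex values of $n$. This yields values of the form $\frac25-\frac{r}{5(\cdots)}$, and we must check that these lie in $\mathcal R$, namely that $1\le 2q-5p\le4$.

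\textbf{Triples in (ii).} Apply (i) to $T_{(a,b,c)}\times\T$: every triple value below $\frac25$ lies in the stated set. The explicit families and Kravitz's $\frac s{3s+1}$ show that every such value occurs. Because the triple part of (ii) feeds back into (i) through the product case, we avoid circularity by treating products $T_{(a,b,c)}\times\T$ directly inside Lemma~\ref{lem:five}. Such a product lies in some special $W$ by the reduction, and there its value is computed explicitly.

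\textbf{Main obstacle.} The main obstacle is the exact analysis of the five $\frac25$-subtori. Their superlevel sets near $\frac25$ may have several cells. We must also verify that every first-contact value, taken over all $n$ and both arithmetic branches modulo $5$, has $2q-5p\in\{1,2,3,4\}$. We would first attempt this cell by cell, via the vertex-value intervals and a residue argument modulo $5$, mimicking Lemma~\ref{lem:parity}.
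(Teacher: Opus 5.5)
Your architecture is the paper's: values below $\frac13$ come from Theorem~\ref{thm:second}, subtori of $\Wone,\Wtwo$ are treated separately, the five subtori of value $\frac25$ go through Lemma~\ref{lem:five}, bounded covolume is a finite enumeration, and the triple part of (ii) follows from (i) applied to $T_v\times\T$. Your first-contact computation for $(2,3,5j)$ and $(1,4,5j+3)$ is exactly what the paper does with $(k,1,0)\cdot P_B(\varepsilon)$ and $(k,1,0)\cdot P_A(\varepsilon)$.

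\textbf{The enumeration step.} As you state it, it is false. Not every bounded-covolume subtorus lying in no special $W_m$ ($|m|_1\in\{3,5\}$) has a point with $G\ge\frac25$. Proposition~\ref{prop:enum25} finds $18{,}960$ such subtori ($141$ orbits, squared covolume at most $1{,}250$) with $\ML<\frac25$. Their values are $\frac7{19},\frac38,\frac8{21},\frac5{13},\frac{12}{31},\frac9{23},\frac{13}{33},\frac{15}{38}$. Unlike in Proposition~\ref{prop:third-enum}, there are sporadic survivors here. Your proof must compute their values exactly and check that they lie in $\mathcal R$. Grid points alone also fail to certify $G\ge\frac25$ for tens of millions of lattices, so exact maximization is needed there too.

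\textbf{The reduction.} It needs more than ``close to $\frac25$''. It needs Lemma~\ref{lem:Wodd}: $\ML(W_m)$ is $\frac12$ or $\frac s{2s+1}$ according to $|m|_1$. This gives the gap $\frac37-\frac25$ and the seven special types. With that, even Lemma~\ref{lem:density} gives $\covol(U)<(35/2)^2$. Lemma~\ref{lem:shortchar} concerns a shortest character $m_1$ of $U$ itself and gives $\ML(U)\ge\ML(W_{m_1})-\eucl{m_1}/(2\covol(U))$; it only shrinks the search to $\covol(U)^2\le11{,}331$.

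\textbf{Lemma~\ref{lem:five}.} Your set-up is right, but the key idea is missing. Write $n$ in terms of the three coordinate forms active at the contact vertex. This yields only $1\le 2q-5p\le Dk$, where $k\in\{1,2,3,4\}$ is the distance (in units of $\frac15$) from the relevant end of $[\alpha,\beta]$ to the next integer, and $D$ is the determinant of the active forms. The case $D>1$ does occur:
\begin{itemize}
\item the end $y=\frac25+\frac\varepsilon4$ of $W_A$ has $D=4$;
\item both ends of $W_B$ have $D>1$;
\item certain vertices of $\Delta_C,\Delta_D,\Delta_E$ have $D>1$.
\end{itemize}
A residue argument modulo $5$ cannot by itself rule out $2q-5p>4$ at these vertices. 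The paper compares $\frac k{5b}$ with $\frac{k'}{5a}$, using $k+k'=5-c$ and, for types $C,D,E$, the barycentric identity on $\Delta_\tau$. It shows that whenever such a vertex attains the minimum with $Dk>4$, the opposite end attains it too, at a vertex with $D=1$ (or $D=2$, $k\le2$). This is not a formality: at the next level $\frac37$ the same scheme produces $\frac{43}{103}$, with $3q-7p=8$.

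\textbf{The circularity.} The circularity you try to avoid does not exist, and your fix is wrong. A subtorus of $\Wone$ or $\Wtwo$ has $\ML\le\frac13$, so if $\ML\ge\frac13$ its value is $\frac13\in\mathcal R$ and no triple values are needed. By contrast, the claim that every product $T_{(a,b,c)}\times\T$ lies in a special $W$ ``by the reduction'' is false, since bounded-covolume products need not. Products in $\Wone,\Wtwo$ are not covered by Lemma~\ref{lem:five} anyway.
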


\begin{lemma}[Shortest character]\label{lem:shortchar}
Let $U$ be a two-dimensional subtorus of $\T^4$ with $A=\covol(U)$, and let $L=\{m\in\Z^4:m\cdot\lambda=0\text{ for all }\lambda\in\Lambda_U\}$ be its lattice of characters. Then $L$ is a primitive lattice of rank two with $\covol(L)=A$, and for every basis $m_1,m_2$ of $L$ we have $U\subseteq W_{m_1}$ and
\[
 \ML(U)\ \ge\ \ML(W_{m_1})-\frac{\eucl{m_1}}{2A}.
\]
\end{lemma}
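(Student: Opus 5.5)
The plan is to prove the three claims in order: the structure of $L$, the containment $U\subseteq W_{m_1}$, and then the inequality, which will be an application of Lemma~\ref{lem:density}(c) with $S=U$ and $S'=W_{m_1}$.

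\emph{Structure of $L$.} Let $E=\R\Lambda_U$. Then $L=E^\perp\cap\Z^4$, which is a primitive lattice of rank two because $E^\perp$ is a rational plane. For the covolume we use the standard fact that a primitive sublattice and its orthogonal complement in $\Z^4$ have equal covolume. Concretely, Lemma~\ref{lem:density}(a) gives $\covol(\pi(\Z^4))=1/A$, where $\pi$ is the projection to $E^\perp$. The lattice $\pi(\Z^4)$ is dual to $L$ inside $E^\perp$: for $m\in L$ and $z\in\Z^4$ we have $m\cdot\pi(z)=m\cdot z\in\Z$, and a linear functional on $E^\perp$ that is integral on $\pi(\Z^4)$ is given by some $m\in E^\perp$ with $m\cdot\Z^4\subseteq\Z$, i.e.\ by $m\in L$. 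Hence $\covol(L)=1/\covol(\pi(\Z^4))=A$.

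\emph{Containment.} Since $m_1\cdot\lambda=0$ for every $\lambda\in\Lambda_U$, the image of $E$ lies in $W_{m_1}$, so $U\subseteq W_{m_1}$.

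\emph{The inequality.} We need a vector $w\in\Z^4\setminus\Lambda_U$ with $E+\R w=m_1^\perp$ and $\supn{\pi(w)}\le\eucl{m_1}/A$. Lemma~\ref{lem:density}(c) then gives $\ML(U)\ge\ML(W_{m_1})-\frac12\supn{\pi(w)}$, which is the claimed bound.

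To construct $w$, let $m_1^*,m_2^*$ be the basis of $\pi(\Z^4)$ dual to $m_1,m_2$, so that $m_i\cdot m_j^*=\delta_{ij}$. Choose $w\in\Z^4$ with $\pi(w)=m_2^*$. Then $m_1\cdot w=0$ and $m_2\cdot w=1$. Since $m_2\cdot w\ne0$ we have $w\notin E$, and $E+\R w$ is a three-dimensional rational subspace contained in $m_1^\perp$, hence equal to it. The subtorus $S'$ of Lemma~\ref{lem:density}(c) is therefore $W_{m_1}$; this uses that $m_1$ is primitive, which holds because it is a basis vector of the primitive lattice $L$.

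It remains to bound $m_2^*$. In the plane $E^\perp$, the vector $m_2^*$ is orthogonal to $m_1$ and satisfies $m_2\cdot m_2^*=1$. Its length is therefore $1/h$, where $h$ is the length of the component of $m_2$ orthogonal to $m_1$. We have $h=\covol(L)/\eucl{m_1}=A/\eucl{m_1}$, so $\supn{m_2^*}\le\eucl{m_2^*}=\eucl{m_1}/A$.

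The main obstacle is the covolume identity $\covol(L)=A$. This is standard, but to keep it rigorous I would argue via duality as above, or alternatively via Plücker coordinates: the Plücker vectors of $\Lambda_U$ and $L$ correspond under the Hodge star, and both are primitive.
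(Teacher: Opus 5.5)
Your proof is correct and follows the paper's argument essentially step for step. You identify $\pi(\Z^4)$ and $L$ as dual lattices in $E^\perp$, so that Lemma~\ref{lem:density}(a) gives $\covol(L)=A$. You then take $w\in\Z^4$ with $m_1\cdot w=0$ and $m_2\cdot w=1$, so that $S'=W_{m_1}$, and bound $\supn{\pi(w)}\le\eucl{\pi(w)}=\eucl{m_1}/A$ before applying Lemma~\ref{lem:density}(c).

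The differences are cosmetic. You prove the duality as $\pi(\Z^4)^*=L$, which is immediate and avoids the paper's remark that homomorphisms $L\to\Z$ extend, and you obtain $w$ through the dual basis rather than choosing it directly.
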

\begin{proof}
$L=\Z^4\cap E^\perp$ with $E=\R\Lambda_U$, so $L$ is primitive of rank two. Every $x\in\Z^4$ satisfies $\pi(x)\cdot m=x\cdot m\in\Z$ for $m\in L$, and since $L$ is primitive every homomorphism $L\to\Z$ is of the form $m\mapsto x\cdot m$ with $x\in\Z^4$. Hence $\pi(\Z^4)$ is the dual lattice of $L$ in $E^\perp$, and Lemma~\ref{lem:density}(a) gives $\covol(L)=1/\covol(\pi(\Z^4))=A$. Choose $w\in\Z^4$ with $m_1\cdot w=0$ and $m_2\cdot w=1$. Then $w\notin\Lambda_U$, and the subtorus $S'$ with $\R\Lambda_{S'}=E+\R w$ has character lattice $\{m\in L:m\cdot w=0\}=\Z m_1$, so $S'=W_{m_1}$. The vector $\pi(w)\in E^\perp$ satisfies $\pi(w)\cdot m_1=0$ and $\pi(w)\cdot m_2=1$, so $\eucl{\pi(w)}=1/h$, where $h$ is the distance from $m_2$ to $\R m_1$. As $A=\eucl{m_1}h$, we get $\supn{\pi(w)}\le\eucl{\pi(w)}=\eucl{m_1}/A$, and Lemma~\ref{lem:density}(c) gives the inequality.
\end{proof}

\begin{lemma}\label{lem:Wodd}
Let $m\in\Z^4$ be primitive with $|m|_1=\sum_i|m_i|\ge2$. Then $\ML(W_m)=\frac12$ if $|m|_1$ is even, and $\ML(W_m)=\frac s{2s+1}$ if $|m|_1=2s+1$. If $|m|_1=5$, then, up to symmetry, $W_m$ is one of
\begin{gather*}
 W_A=\{(y,4y,z,u)\},\quad W_B=\{(2y,3y,z,u)\},\\ W_C=\{(a,b,3a+b,u)\},\\
 W_D=\{(a,b,2a+2b,u)\},\quad W_E=\{(a,b,c,2a+b+c)\},
\end{gather*}
where the parameters run over $\T$.
\end{lemma}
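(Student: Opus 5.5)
We compute $\ML(W_m)$ directly from the description of $W_m$ as the preimage of $\Z$ under the linear form $x\mapsto m\cdot x$, in the spirit of the proof of Lemma~\ref{lem:W13}, and then list the primitive vectors of $\ell^1$ norm $5$.

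\emph{The value.} Fix $d\in[0,\frac12]$. A point $x\in\T^4$ has $G(x)\ge d$ exactly when each coordinate has a representative in $[d,1-d]$. So $W_m$ contains a point with $G\ge d$ if and only if the set $\{m\cdot x : x\in[d,1-d]^4\}$ contains an integer. Because every coordinate of $x$ can be shifted by an integer independently, this criterion is exact and not merely sufficient. This set is the interval centred at $\frac12\sum_i m_i$ with half-length $(\frac12-d)\,|m|_1$, and $\sum_i m_i\equiv|m|_1\pmod 2$.
\begin{enumerate}[label=\textup{(\roman*)},leftmargin=*]
\item If $|m|_1$ is even, the centre is an integer. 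Hence $d=\frac12$ is admissible, and $\ML(W_m)=\frac12$ because $G\le\frac12$ always.
\item If $|m|_1=2s+1$, the centre is a half-integer, so the nearest integer is at distance $\frac12$. The interval reaches it if and only if $(\frac12-d)(2s+1)\ge\frac12$, that is $d\le\frac{s}{2s+1}$. The largest admissible $d$ is therefore $\frac s{2s+1}$, and this maximum is attained because the box is closed.
\end{enumerate}
The hypothesis $|m|_1\ge2$ only guarantees that $W_m$ is not contained in a coordinate hyperplane. It is not needed in the computation itself, and it is consistent with the formula, since $s\ge1$ in the odd case.

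\emph{The case $|m|_1=5$.} Up to signed coordinate permutations, the multiset of nonzero $|m_i|$ is a partition of $5$ into at most four parts. The partition $5$ is excluded by primitivity, and $1+1+1+1+1$ has too many parts. The remaining partitions are $4+1$, $3+2$, $3+1+1$, $2+2+1$ and $2+1+1+1$. Using sign changes we choose the representatives
\[
 m=(4,-1,0,0),\quad (3,-2,0,0),\quad (3,1,-1,0),\quad (2,2,-1,0),\quad (2,1,1,-1).
\]
Each kernel $\{m\cdot x\in\Z\}$ is then read off, as follows.
\begin{itemize}
\item For $(4,-1,0,0)$ we can solve for $x_2=4x_1$, giving $W_A$.
\item For $(3,1,-1,0)$ we solve for $x_3=3x_1+x_2$, giving $W_C$.
\item For $(2,2,-1,0)$ we solve for $x_3=2x_1+2x_2$, giving $W_D$.
\item For $(2,1,1,-1)$ we solve for $x_4=2x_1+x_2+x_3$, giving $W_E$.
\item For $(3,-2,0,0)$ the condition $3x_1-2x_2\in\Z$ describes the circle $\{(2y,3y)\}$ by Lemma~\ref{lem:orbit} with $(x,y)=(2,3)$, and the other two coordinates are free. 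This gives $W_B$.
\end{itemize}

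\emph{Main obstacle.} The only delicate point is the exactness claim in the first step: that $G(x)\ge d$ is equivalent to the existence of a lift of $x$ in the box $[d,1-d]^4$. This holds because the condition $\norm{x_i}\ge d$ is coordinatewise and each coordinate can be lifted independently. Once that is granted, the rest is bookkeeping.
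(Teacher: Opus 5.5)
Your proof is correct and is essentially the paper's argument. The paper gets the lower bound from the explicit point $x_i=\operatorname{sgn}(m_i)\,t$ and the upper bound by showing that $\norm{x_i}>\frac s{2s+1}$ for all $i$ with $m_i\ne0$ forces $m\cdot x\in(s,s+1)$; these are the two directions of your single interval criterion, and the paper uses the same five representatives for $|m|_1=5$ (it also notes that the parametrizations are injective, which the statement does not need but is used later for $\varphi_\tau$).
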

\begin{proof}
Put $x_i=\operatorname{sgn}(m_i)\,t$ for $m_i\ne0$ and $x_i=\frac12$ for $m_i=0$, with $t=\lfloor\frac{|m|_1}2\rfloor/|m|_1$. Then $m\cdot x=\lfloor\frac{|m|_1}2\rfloor\in\Z$ and $G(x)=t$, which is $\frac12$ or $\frac s{2s+1}$. Conversely let $|m|_1=2s+1$, $c=\frac s{2s+1}$, and suppose $\norm{x_i}>c$ for all $i$ with $m_i\ne0$. Then $y_i=\operatorname{sgn}(m_i)x_i$ has a representative in $(c,1-c)$, and $m\cdot x=\sum_i|m_i|y_i$ lies in $((2s+1)c,(2s+1)(1-c))=(s,s+1)$, so $x\notin W_m$. Hence $\ML(W_m)\le c$. If $|m|_1=5$, then up to signed permutations the absolute values of the entries of $m$ are $(4,1,0,0)$, $(3,2,0,0)$, $(3,1,1,0)$, $(2,2,1,0)$ or $(2,1,1,1)$, and the five parametrizations are injective homomorphisms $\T^3\to\T^4$ onto $W_m$ for $m=(4,-1,0,0)$, $(3,-2,0,0)$, $(3,1,-1,0)$, $(2,2,-1,0)$, $(2,1,1,-1)$.
\end{proof}

\begin{lemma}\label{lem:generic25}
Let $U$ be a coordinate-free two-dimensional subtorus with $\ML(U)<\frac25$ that is contained in no $W_m$ with $|m|_1\in\{3,5\}$. Then $\covol(U)<\frac{35}2\sqrt{37}$, that is, $\covol(U)^2\le11{,}331$.
\end{lemma}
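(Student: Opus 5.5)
The plan is to apply Lemma~\ref{lem:shortchar} with $m_1$ a shortest nonzero vector of the character lattice $L$ of $U$, extended to a basis $m_1,m_2$ of $L$ (any shortest vector of a rank-two lattice is primitive and so extends). Then $U\subseteq W_{m_1}$, and
\[
 \tfrac25>\ML(U)\ \ge\ \ML(W_{m_1})-\frac{\eucl{m_1}}{2A},
\]
where $A=\covol(U)=\covol(L)$. Since $U$ is coordinate-free, $m_1\ne\pm e_i$, so $|m_1|_1\ge2$. The hypothesis excludes $|m_1|_1\in\{3,5\}$. Lemma~\ref{lem:Wodd} then gives $\ML(W_{m_1})=\frac12$ if $|m_1|_1$ is even, and $\ML(W_{m_1})=\frac s{2s+1}\ge\frac37$ if $|m_1|_1=2s+1\ge7$. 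In every case $\ML(W_{m_1})-\frac25\ge\frac3{7}-\frac25=\frac1{35}$, which yields the basic inequality
\[
 A<\tfrac{35}{2}\eucl{m_1},
\]
and the sharper $A<5\eucl{m_1}$ when $|m_1|_1$ is even.

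It therefore suffices to show $\eucl{m_1}^2\le37$, or else to get a direct bound on $A$ when $m_1$ is longer. For long $m_1$ I would use two facts. First, Hermite's inequality in rank two gives $\eucl{m_1}^2\le\frac2{\sqrt3}A$. Second, the loneliness of $W_{m_1}$ grows with $|m_1|_1$: with $|m_1|_1\ge\eucl{m_1}$ one has
\[
 \ML(W_{m_1})-\tfrac25\ \ge\ \tfrac1{10}-\frac1{2(2s+1)}\ \ge\ \tfrac1{10}-\frac1{2\eucl{m_1}}.
\]
Combining these bounds $\eucl{m_1}$ absolutely: one gets $\eucl{m_1}-5<10/\sqrt3$, so $\eucl{m_1}^2\le116$. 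This leaves a finite set of candidate vectors $m_1$ with $38\le\eucl{m_1}^2\le116$.

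For each candidate I would use the exact value $\ML(W_{m_1})=\frac s{2s+1}$ with $2s+1=|m_1|_1$ (or $\frac12$ in the even case), rather than the crude bound through $\eucl{m_1}$. The resulting bound is $A<\eucl{m_1}/\bigl(2(\ML(W_{m_1})-\frac25)\bigr)$, and I would check that it is below $\frac{35}2\sqrt{37}$. Since $|m_1|_1$ is usually much larger than $\eucl{m_1}$ for vectors of this length, this should leave few cases. For any survivors I would additionally intersect with the Hermite constraint $A\ge\frac{\sqrt3}2\eucl{m_1}^2$ to rule them out. This is a short finite check over the classes of $m\in\Z^4$ up to symmetry, organised by $(\eucl m^2,|m|_1)$. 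The constant $\frac{35}2\sqrt{37}$ suggests that the extremal case is $|m_1|_1=7$ with $\eucl{m_1}^2=37$, for instance $m_1=(6,1,0,0)$, and that all longer vectors give smaller bounds.

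The main obstacle is this intermediate range. The crude bound through $\eucl{m_1}$ alone does not reach $\frac{35}2\sqrt{37}$ near $\eucl{m_1}^2\approx38$, so the argument must use the exact $\ell^1$ norm of the shortest character. My first attempt would be the inequality $|m|_1\ge\eucl m^2/\supn m$ together with a small case split on $\supn{m_1}$. If that is not enough, I would fall back on an exact enumeration of the characters with $\eucl m^2\le116$, which is a finite computation of the same kind as the ones already used in the paper. The final translation, $A<\frac{35}2\sqrt{37}\iff A^2<\frac{1225\cdot37}4=11331.25$, gives $A^2\le11331$ because $A^2$ is an integer, being a sum of squared Plücker coordinates.
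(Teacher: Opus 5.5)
Your proposal is correct and uses the same ingredients as the paper's proof: a shortest character $m_1$, Lemmas~\ref{lem:shortchar} and~\ref{lem:Wodd}, and Hermite's bound $\eucl{m_1}^2\le\frac2{\sqrt3}A$. The only difference is bookkeeping: the paper splits directly by $|m_1|_1$, while you first prove $\eucl{m_1}^2\le116$.

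Your remaining finite check needs no enumeration. A primitive $m$ with $|m|_1=7$ has $\eucl m^2\le37$, so in the range $38\le\eucl{m_1}^2\le116$ there are three cases, all below $\frac{35}2\sqrt{37}\approx106.4$: $|m_1|_1$ even gives $A<5\sqrt{116}$; $|m_1|_1=9$ forces $\eucl{m_1}^2\le65$ and gives $A<\frac{45}4\sqrt{65}$; and $|m_1|_1\ge11$ gives $A<\frac{55}6\sqrt{116}$.
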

\begin{proof}
Let $A=\covol(U)$ and let $m_1$ be a shortest nonzero vector of $L$. It is part of a basis of $L$ and primitive in $\Z^4$. As $U$ is coordinate-free, $m_1\ne\pm e_i$, so $|m_1|_1\ge2$, and $|m_1|_1\notin\{3,5\}$ by hypothesis. Hermite's bound in rank two gives $\eucl{m_1}^2\le\frac2{\sqrt3}A$, and Lemmas~\ref{lem:shortchar} and~\ref{lem:Wodd} give $\frac25>\ML(U)\ge\ML(W_{m_1})-\eucl{m_1}/(2A)$. If $|m_1|_1$ is even, this gives $A<5\eucl{m_1}$, hence $\eucl{m_1}<10/\sqrt3$ and $A<50/\sqrt3$. If $|m_1|_1=7$, it gives $A<\frac{35}2\eucl{m_1}\le\frac{35}2\sqrt{37}$, since $\eucl m^2\le37$ for primitive $m$ with $|m|_1=7$. If $|m_1|_1=2s+1\ge9$, it gives $A<\frac{5(2s+1)}{2(s-2)}\eucl{m_1}$. For $s\le6$ use $\eucl{m_1}\le|m_1|_1$ to get $A<\frac{5(2s+1)^2}{2(s-2)}\le\frac{845}8$. For $s\ge7$, Hermite's bound gives $\eucl{m_1}<\frac{5(2s+1)}{\sqrt3(s-2)}$, while $\eucl{m_1}\ge\frac12|m_1|_1=\frac{2s+1}2$, so $s=7$ and $A<\frac{15}2\cdot\frac{15}{\sqrt3}<65$. In every case $A<\frac{35}2\sqrt{37}$, and $(\frac{35}2)^2\cdot37=11{,}331.25$.
\end{proof}

We now treat the five subtori of Lemma~\ref{lem:Wodd}. For $\tau\in\{A,B,C,D,E\}$ let $\varphi_\tau:\T^3\to W_\tau$ be the parametrization of Lemma~\ref{lem:Wodd} and $G_\tau=G\circ\varphi_\tau$. As in Section~\ref{sec:two}, the two-dimensional subtori of $W_\tau$ are the images of $U_n=\{y\in\T^3:n\cdot y\in\Z\}$ for primitive $n\in\Z^3$. Write $J(\varepsilon)=[\frac25-\varepsilon,\frac35+\varepsilon]$ and, for $0\le\varepsilon\le\frac1{15}$,
\begin{align*}
 P_A(\varepsilon)&=[\tfrac25-\varepsilon,\tfrac25+\tfrac\varepsilon4]\times J(\varepsilon)^2, & P_B(\varepsilon)&=[\tfrac15-\tfrac\varepsilon2,\tfrac15+\tfrac\varepsilon3]\times J(\varepsilon)^2,\\
 P_C(\varepsilon)&=\big((\tfrac25,\tfrac25)+\varepsilon\Delta_C\big)\times J(\varepsilon), & P_D(\varepsilon)&=\big((\tfrac25,\tfrac25)+\varepsilon\Delta_D\big)\times J(\varepsilon),\\
 P_E(\varepsilon)&=(\tfrac25,\tfrac25,\tfrac25)+\varepsilon\Delta_E,
\end{align*}
where
\begin{align*}
 \Delta_C&=\operatorname{conv}\{(-1,-1),(\tfrac23,-1),(-1,4)\},\\
 \Delta_D&=\operatorname{conv}\{(-1,-1),(\tfrac32,-1),(-1,\tfrac32)\},\\
 \Delta_E&=\operatorname{conv}\{(-1,-1,-1),(\tfrac32,-1,-1),(-1,4,-1),(-1,-1,4)\}.
\end{align*}

\begin{lemma}\label{lem:fivecells}
For $0\le\varepsilon\le\frac1{15}$ the set $\{y\in\T^3:G_\tau(y)\ge\frac25-\varepsilon\}$ is the image of $P_\tau(\varepsilon)\cup(-P_\tau(\varepsilon))$.
\end{lemma}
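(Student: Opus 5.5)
The plan is to argue exactly as in Lemmas~\ref{lem:U1cells}, \ref{lem:U2cells} and~\ref{lem:sumcells}, separately for each $\tau\in\{A,B,C,D,E\}$. Put $d=\frac25-\varepsilon$. The hypothesis $0\le\varepsilon\le\frac1{15}$ means $\frac13\le d\le\frac25$, and this is the only property of $\varepsilon$ we will use. In each case $G_\tau$ is the minimum of $\norm{\ell_i(y)}$ over the four coordinate forms $\ell_i$ of $\varphi_\tau$. The condition $G_\tau(y)\ge d$ says that, for suitable integers $k_i$, each $\ell_i(y)\in[k_i+d,\,k_i+1-d]$. A coordinate of $\varphi_\tau$ that is a free parameter ($z,u$ for $A,B$, and $u$ for $C,D$) contributes exactly the factor $[d,1-d]=J(\varepsilon)$. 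So the work is in the remaining one, two or three parameters.

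\textbf{Cases $A$ and $B$ (one parameter).} Take $y\in[0,\frac12]$; the other half comes from the symmetry $y\mapsto -y$, which produces $-P_\tau$.
\begin{itemize}
\item For $A$ we need $y\ge d$ and $4y\in[k+d,k+1-d]$. Since $4y\le 2$, only $k=1$ is possible. Because $d\ge\frac13$, the lower bound $(1+d)/4$ is at most $d$, so the constraint $y\ge d$ dominates. The cell is $[d,(2-d)/4]=[\frac25-\varepsilon,\frac25+\frac\varepsilon4]$.
\item For $B$ the forms are $2y$ and $3y$. I would list the finitely many pairs $(k_2,k_3)$ with $k_2\in\{0\}$ and $k_3\in\{0,1\}$. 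Using $d\ge\frac13$, only $k_2=0$, $k_3=0$ survives, which gives $y\in[d/2,(1-d)/3]=[\frac15-\frac\varepsilon2,\frac15+\frac\varepsilon3]$. The other branches, for example $y$ near $\frac25$ where $\norm{2y}$ is about $\frac15$, are empty because $d>\frac15$.
\end{itemize}

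\textbf{Cases $C$, $D$, $E$ (two or three parameters).} Work in $[0,1)^2$ or $[0,1)^3$. Then $3a+b\in[0,4)$, $2a+2b\in[0,4)$ and $2a+b+c\in[0,4)$, so there are only finitely many shift values $k$. For each $k$, I would show that one of two things happens. Either the inequalities are infeasible when $d\ge\frac13$, by adding two or three of them to get a contradiction as in Lemma~\ref{lem:U2cells}. Or they cut out a simplex in which three (respectively four) facets imply all the others. The feasible shifts should be exactly $k=1$ for $C$, $D$ and $E$, together with the shift produced by the symmetry $p\mapsto(1,\dots,1)-p$, which gives $-P_\tau$. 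Taking $C$ as an example, the defining inequalities become
\[
a\ge d,\qquad b\ge d,\qquad 3a+b\le 2-d.
\]
I would then compute the vertices, substitute $d=\frac25-\varepsilon$, and recover the stated sets. For $C$, the vertex $(d,d)$ becomes $(\frac25,\frac25)+\varepsilon(-1,-1)$, the vertex $((2-2d)/3,d)$ becomes $(\frac25,\frac25)+\varepsilon(\frac23,-1)$, and the vertex $(d,2-4d)$ becomes $(\frac25,\frac25)+\varepsilon(-1,4)$. The same computation gives $\Delta_D$ and $\Delta_E$.

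I expect the main obstacle to be case $E$, and to a lesser extent case $C$. There are three free parameters with three possible shifts, and the redundancy of the unused constraints has to be checked by hand. For example, $b\le 1-d$ must follow from $a\ge d$, $c\ge d$ and $2a+b+c\le 2-d$; it does, since these give $b\le 2-4d\le 1-d$ whenever $d\ge\frac13$. Each such implication should reduce to $d\ge\frac13$, possibly with equality at $\varepsilon=\frac1{15}$. That endpoint is where I would check most carefully that no second cell touches in, and that the union with $-P_\tau$ is correct there.
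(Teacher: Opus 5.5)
Your proposal is correct and is essentially the paper's argument: the same reduction to $d=\frac25-\varepsilon\in[\frac13,\frac25]$, the same interval computations for $A$ and $B$, and for $C,D,E$ the same observation that only the shift $k=1$ and its mirror image $k=2$ survive, with the cell $\{\text{coordinates}\ge d,\ \ell\le 2-d\}$ and vertex computations that you carry out correctly. The paper handles $C,D,E$ in one step by noting that $\ell$ has coefficients $\ge1$ summing to $4$, so $\ell\in[4d,4-4d]\subseteq[\frac43,\frac83]$ on the box. One small correction: in case $A$ the shift $k=0$ is ruled out by $y\ge d\ge\frac13$, which gives $4y\ge\frac43>1-d$; the bound $4y\le2$ only excludes $k\ge2$.
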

\begin{proof}
Put $d=\frac25-\varepsilon\in[\frac13,\frac25]$. The conditions $\norm z,\norm u\ge d$ mean $z,u\in[d,1-d]+\Z$, which is $J(\varepsilon)+\Z$, and $J(\varepsilon)=-J(\varepsilon)+1$. For $W_A$: if $y\in[0,\frac12]$, then $\norm y\ge d$ means $y\ge d$, and then $4y\in[\frac43,2]$, so $\norm{4y}\ge d$ means $4y\in[1+d,2-d]$. As $d\ge\frac13$, $\frac{1+d}4\le d$, so the solutions in $[0,\frac12]$ form $[d,\frac{2-d}4]=[\frac25-\varepsilon,\frac25+\frac\varepsilon4]$, and those in $[\frac12,1]$ are their negatives. For $W_B$: in $[0,\frac12]$, $\norm{2y}\ge d$ means $y\in[\frac d2,\frac{1-d}2]$, and $\norm{3y}\ge d$ means $y\in[\frac d3,\frac{1-d}3]\cup[\frac{1+d}3,\frac12]$. Since $\frac{1+d}3>\frac{1-d}2$ for $d>\frac15$, the intersection is $[\frac d2,\frac{1-d}3]=[\frac15-\frac\varepsilon2,\frac15+\frac\varepsilon3]$. For $W_C$, $W_D$, $W_E$ let $\ell$ be the last coordinate form, $3a+b$, $2a+2b$ or $2a+b+c$. Its coefficients are at least $1$ and sum to $4$. On the box where the other coordinates lie in $[d,1-d]$, $\ell$ takes values in $[4d,4-4d]\subseteq[\frac43,\frac83]$, so $\norm\ell\ge d$ means $\ell\in[1+d,2-d]\cup[2+d,3-d]$, and the bounds $1+d\le 4d$ and $4-4d\le 3-d$ are automatic. The set $\{\text{coordinates}\ge d,\ \ell\le 2-d\}$ already forces every coordinate to be at most $2-d-3d\le 1-d$, and the substitution $y\mapsto -y$ maps it onto $\{\ell\ge 2+d\}$ modulo $\Z^3$. Writing the coordinates as $\frac25+\varepsilon\delta_i$, the conditions read $\delta_i\ge-1$ and $3\delta_1+\delta_2\le1$, $\delta_1+\delta_2\le\frac12$, respectively $2\delta_1+\delta_2+\delta_3\le1$, which define $\Delta_C$, $\Delta_D$, $\Delta_E$.
\end{proof}

\begin{lemma}[Subtori of the five tori]\label{lem:five}
Let $\tau\in\{A,B,C,D,E\}$ and let $n\in\Z^3$ be primitive with $\varphi_\tau(U_n)$ coordinate-free. If $\frac13\le\ML(U_n)<\frac25$, then $\ML(U_n)\in\mathcal R$.
\end{lemma}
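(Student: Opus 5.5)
The plan is to argue exactly as in the proofs of Lemma~\ref{lem:U2cells}/Theorem~\ref{thm:U2} and Lemma~\ref{lem:W2sub}(iii), using Lemma~\ref{lem:fivecells} as the description of the superlevel sets. Fix $\tau$ and primitive $n$, and put $\ell(y)=n\cdot y$ on $\R^3$. A point of $\T^3$ lies in $U_n$ iff $\ell(y)\in\Z$. By Lemma~\ref{lem:fivecells}, for $d=\frac25-\varepsilon$ with $0\le\varepsilon\le\frac1{15}$, the subtorus $U_n$ has a point with $G_\tau\ge d$ iff $\ell(P_\tau(\varepsilon))$ or $\ell(-P_\tau(\varepsilon))=-\ell(P_\tau(\varepsilon))$ contains an integer. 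Since the two intervals are negatives of each other, only $P_\tau(\varepsilon)$ matters. Each $P_\tau(\varepsilon)$ is $c_\tau+\varepsilon Q_\tau$, where $c_\tau$ is $(\frac25,\frac12,\frac12)$, $(\frac15,\frac12,\frac12)$, $(\frac25,\frac25,\frac12)$ or $(\frac25,\frac25,\frac25)$. Here $J(\varepsilon)=\frac12+[-(\frac1{10}+\varepsilon),\frac1{10}+\varepsilon]$, so for the free coordinates the $\varepsilon$-dependence is affine with a constant part. I would write $\ell(P_\tau(\varepsilon))=[\alpha-\beta\varepsilon,\alpha'+\beta'\varepsilon]$, where $\alpha,\alpha'$ are $\ell$ at the extreme points of the $\varepsilon=0$ set and $\beta,\beta'\ge0$ are maxima of $\mp\ell$ over the vertices of $Q_\tau$, computed directly from the listed vertex sets.

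\textbf{Case split.} If the $\varepsilon=0$ interval $\ell(P_\tau(0))$ contains an integer, then $\ML(U_n)\ge\frac25$, which is excluded. So assume it does not. The interval has endpoints in $\frac1{10}\Z$, and in fact in $\frac15\Z+\frac1{10}(n_2+n_3)\Z$ for A, B; for C, D, E the centre is $\ell(c_\tau)\in\frac1{10}\Z$ as well. The loneliness is then $\ML(U_n)=\frac25-\varepsilon^*$, where $\varepsilon^*=\min\{(\alpha-\lfloor\alpha\rfloor)/\beta,\ (\lceil\alpha'\rceil-\alpha')/\beta'\}$ is the first time an endpoint hits an integer. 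This holds provided $\varepsilon^*\le\frac1{15}$, which is guaranteed by the hypothesis $\ML(U_n)\ge\frac13$: if $\varepsilon^*>\frac1{15}$ then no point has $G_\tau\ge\frac13$. The gaps $\alpha-\lfloor\alpha\rfloor$ lie in $\{\frac k{10}\}$, and $\beta,\beta'$ are integer combinations of the $n_i$ with the fixed rational weights of $Q_\tau$ ($\frac14,\frac13,\frac12,\frac23,\frac32$). Hence $\varepsilon^*=g/N$ with $g$ a small fraction and $N$ an integer linear form in $|n_i|$.

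\textbf{Arithmetic.} Writing $\frac25-\frac{g}{N}$ over a common denominator, I would check for each of the five types and each sign pattern of $n$ (finitely many linear regions) that the result has the shape $\frac{p}{q}$ with $2q-5p\in\{1,2,3,4\}$. Heuristically, $\frac25-\frac{r}{5q}=\frac{2q-r}{5q}$, and after absorbing the weights the numerator defect $r$ is in $\{1,\dots,4\}$. The key point is that the gap to the nearest integer from a point of $\frac1{10}\Z$ (or $\frac15\Z$) not in $\Z$ is at most $\frac12$, while $\beta$ carries a compensating factor. This should produce the parametrizations $\frac{2j}{5j+2}$ and $\frac{2j+1}{5j+4}$ of \eqref{eq:R}, with the range $[\frac13,\frac25)$ automatic.

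\textbf{Main obstacle.} I expect the bookkeeping in the last step to be the hard part. One must show that the ratio $g/N$ always reduces so that $2q-5p\le4$, and that no value such as $\frac9{25}$ slips through. This requires tracking parity and residues of the $n_i$ modulo $2$ and $5$, much as Lemma~\ref{lem:parity} did modulo $4$. I would first try to reduce each case to a single comparison of the form $\frac{k}{10N}$ with $\gcd$ control, and fall back on splitting by $(n\bmod 10)$, where the defect computation is a finite check.
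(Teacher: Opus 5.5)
Your setup is the same as the paper's: use Lemma~\ref{lem:fivecells} to reduce to the first $\varepsilon$ at which the interval $n\cdot P_\tau(\varepsilon)=[\alpha-\varepsilon a,\beta+\varepsilon b]$ meets $\Z$. But the step that carries the lemma, namely why $\frac25-\varepsilon^*$ always satisfies $2q-5p\le4$, is only announced as bookkeeping, and the heuristic you give for it is false.

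There are two concrete problems. First, the extreme points of $P_\tau(0)$ lie in $\frac15\Z^3$, not just $\frac1{10}\Z^3$. So $\alpha,\beta\in\frac15\Z$ and the gaps are $\frac k5$ with $k\in\{1,2,3,4\}$. With gaps in $\frac1{10}\Z$ you would already lose the bound $4$.

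Second, the missing mechanism is the following. At the vertex where the contacting end is attained, three independent coordinate forms $\sigma_i\phi_i$ are active. Write $n=\sum\lambda_i\sigma_i\phi_i$ with $\lambda_i\in\frac1D\Z$, where $D$ is the determinant of these forms. Comparing $\beta=K+\frac25\Lambda$ with $N=K+V\Lambda$ gives $1\le 2q-5p\le Dk$. Unimodular contact vertices are therefore automatic. The whole difficulty sits at the vertices with $D>1$: the $4y$ end of $W_A$, the $2y$ and $3y$ ends of $W_B$, the vertex $(\frac23,-1)$ of $\Delta_C$, the vertices $(\frac32,-1)$ and $(-1,\frac32)$ of $\Delta_D$, and $(\frac32,-1,-1)$ of $\Delta_E$.

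At these vertices the per-end value really can leave $\mathcal R$. Take $\tau=A$ and $n=(25,1,0)$. Then $n\cdot P_A(\varepsilon)=[\frac{52}5-26\varepsilon,\ \frac{53}5+\frac{29}4\varepsilon]$. The upper end, where $D=4$ and $k=2$, would give $\frac25-\frac8{145}=\frac{10}{29}$, and $2\cdot29-5\cdot10=8$. The lemma holds here only because the lower end reaches $10$ first, at $\varepsilon=\frac1{65}$, giving $\frac5{13}$.

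So no tracking of residues modulo $2$, $5$ or $10$ can prove the statement. Which end is hit first is an inequality condition, not a congruence condition, and you need an argument comparing the two ends. In the paper this uses that $[\alpha,\beta]$ has length $\frac c5$, with $c=n_2+n_3$, $n_3$ or $0$, so $k+k'=5-c$. For $A$ with $k\ge2$ this gives $ka\ge2n_1+2c>(3-c)(\frac{n_1}4+c)\ge k'b$, so the $D=4$ end is not the minimum. For $B$ the analogous inequalities handle both ends. For $C$, $D$, $E$ the barycentric weights of the origin in $\Delta_\tau$ give $M\le\frac23m$ or $M\le\frac32m$. These bounds force any $D>1$ end with $Dk>4$ that attains the minimum to tie with an end of determinant $1$, or of determinant $2$ with $k\le2$. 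Without this comparison step your proposal does not establish the lemma.
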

\begin{proof}
Write $V=\ML(U_n)=\frac25-\varepsilon^*$. Since $P_\tau(\varepsilon)$ is convex and $U_n$ lifts to $\{y\in\R^3:n\cdot y\in\Z\}$, Lemma~\ref{lem:fivecells} shows that for $\varepsilon\le\frac1{15}$ we have $V\ge\frac25-\varepsilon$ exactly when the interval $n\cdot P_\tau(\varepsilon)$ contains an integer (the set $-P_\tau$ gives the negative interval). Each $P_\tau(\varepsilon)$ is a product of intervals $[c-\varepsilon e,c'+\varepsilon e']$ and of a set $p+\varepsilon\Delta$, so
\[
 n\cdot P_\tau(\varepsilon)=[\alpha-\varepsilon a,\ \beta+\varepsilon b],
\]
where $\beta=n\cdot x_0$ and $b=n\cdot w$ for a vertex $x_0+\varepsilon w$ of $P_\tau(\varepsilon)$ at which $n\cdot y$ is maximal for every $\varepsilon\in(0,\frac1{15}]$, and similarly $\alpha$, $a$ at a minimal vertex. Here $x_0\in\frac15\Z^3$, so $\alpha,\beta\in\frac15\Z$. As $V<\frac25$, $[\alpha,\beta]$ contains no integer. Let $N=\lceil\beta\rceil$, $N'=\lfloor\alpha\rfloor$, $k=5(N-\beta)$ and $k'=5(\alpha-N')$, so $k,k'\in\{1,2,3,4\}$, and $\varepsilon^*=\min(\frac k{5b},\frac{k'}{5a})$ with $\frac k0=\infty$. Suppose first that the minimum is $\frac k{5b}$.

At the vertex $x_0+\varepsilon w$ three linearly independent coordinate forms $\phi_i$, $i\in I$, of $\varphi_\tau$ are active, that is, $\sigma_i\phi_i(x_0+\varepsilon w)=\kappa_i+\frac25-\varepsilon$ with $\sigma_i=\pm1$ and $\kappa_i\in\Z$. Write $n=\sum_{i\in I}\lambda_i\sigma_i\phi_i$, so that $\lambda_i\in\frac1D\Z$ with $D=|\det(\phi_i)_{i\in I}|$, and put $K=\sum\lambda_i\kappa_i$ and $\Lambda=\sum\lambda_i$. Evaluating at $\varepsilon=0$ and $\varepsilon=\varepsilon^*$ gives $\beta=K+\frac25\Lambda$ and $N=K+V\Lambda$. Hence $V=P/Q$ with $P=D(N-K)$ and $Q=D\Lambda$ integers and $2Q-5P=5D(\beta-N)=-Dk$. As $V<\frac25$, writing $V=p/q$ in lowest terms gives $1\le2q-5p\le Dk$. If the minimum is $\frac{k'}{5a}$, the same argument for $-n$, which exchanges the two ends, gives $1\le2q-5p\le D'k'$, where $D'$ is the determinant at the minimal vertex. So $V\in\mathcal R$ whenever the end attaining the minimum has determinant $1$.

The vertices with $D>1$ are the following. For $\tau=A$ the ends $y=\frac25+\frac\varepsilon4$ (form $4y$, $D=4$). For $\tau=B$ the ends $y=\frac15-\frac\varepsilon2$ ($2y$, $D=2$) and $y=\frac15+\frac\varepsilon3$ ($3y$, $D=3$). For $\tau=C$ the vertex $(\frac23,-1)$ ($D=3$). For $\tau=D$ the vertices $(\frac32,-1)$, $(-1,\frac32)$ ($D=2$). For $\tau=E$ the vertex $(\frac32,-1,-1)$ ($D=2$). We show that if an end at such a vertex attains the minimum and the product of its determinant and its $k$ exceeds $4$, then the other end attains the minimum as well, at a vertex with determinant $1$, or with determinant $2$ and $k\le2$. Since $V$ does not depend on which end is used, this proves the lemma.

Types $A$ and $B$. Using the symmetries $y\mapsto-y$, $z\mapsto-z$, $u\mapsto-u$ we may take $n_1,n_2,n_3\ge0$, and $c=n_2+n_3\ge1$ since $\varphi_\tau(U_n)$ is coordinate-free. If $n_1=0$, then $\varphi_\tau(U_n)$ is the product of the circle of $(1,4)$ or $(2,3)$ with a circle in the $(z,u)$ plane, so $V=\min(\frac25,\ML(n_2,n_3))$, and $V\in[\frac13,\frac25)$ forces $V=\frac13\in\mathcal R$. Let $n_1\ge1$. The interval $[\alpha,\beta]$ has length $\frac c5$, so $k+k'=5-c$. For $A$, $a=n_1+c$, $b=\frac{n_1}4+c$, and the upper end has $D=4$, while the lower end has $D=1$. If $k\ge2$, then $k'\le3-c$, and $k a\ge 2n_1+2c>(3-c)(\frac{n_1}4+c)\ge k'b$, so $\frac k{5b}>\frac{k'}{5a}$ and the upper end is not the minimum. For $B$, $a=\frac{n_1}2+c$ and $b=\frac{n_1}3+c$. If the upper end ($D=3$) gives the minimum with $k\ge2$, then $k'\le3-c$ and $ka\ge n_1+2c>(3-c)(\frac{n_1}3+c)\ge k'b$, a contradiction. If the lower end ($D=2$) gives the minimum with $k'\ge3$, then $c=1$, $k=1$, $k'=3$, and $k'b=n_1+3>\frac{n_1}2+1=ka$, again a contradiction.

Types $C$, $D$, $E$. Let $f$ be the linear form $n$ restricted to the $\delta$-coordinates of $\Delta_\tau$, and $M=\max_{\Delta_\tau}f$, $m=-\min_{\Delta_\tau}f$. As $0$ is an interior point of $\Delta_\tau$ and $f\ne0$ (otherwise $\varphi_\tau(U_n)$ lies in $\{u=0\}$), $M,m>0$. With $n_3\ge0$ for $C$ and $D$, we have $b=M+c$, $a=m+c$ with $c=n_3$ for $C$, $D$ and $c=0$ for $E$, and $k+k'=5-c$. The origin is the barycenter of the vertices of $\Delta_\tau$ with weights $(\frac15,\frac35,\frac15)$, $(\frac15,\frac25,\frac25)$ and $(\frac15,\frac25,\frac15,\frac15)$ in the order listed, so the weighted sum of the values of $f$ at the vertices vanishes. If $f$ is maximal at the vertex with $D>1$ and weight $\mu$, this gives $\mu M\le(1-\mu)m$, that is, $M\le\frac23m$ for $C$ and $M\le\frac32m$ for $D$ and $E$. For $C$ ($D=3$), suppose $k\ge2$ and $\frac kb\le\frac{k'}a$. Since $b\le a$, this needs $k\le k'$, so $k+k'\ge4$ and $c\le1$. If $c=1$ then $k=k'=2$ and $a\le b$ forces $m\le M\le\frac23m$, which is impossible. If $c=0$ then $(k,k')=(2,3)$ and $2m\le 3M\le2m$, so the two ends give the same $\varepsilon^*$, and the lower end is a vertex with $D=1$. For $D$ and $E$ ($D=2$), suppose $k\ge3$ and $k a\le k'b$. Then $k'=5-c-k\le2-c$. If $c=1$, then $k=3$, $k'=1$ and $3(m+1)\le M+1\le\frac32m+1$, which is impossible. So $c=0$, $k'=5-k\le2$, and $3m\le km\le k'M\le\frac32k'm\le3m$. Hence $k=3$, $k'=2$ and the two ends give the same $\varepsilon^*$. The lower end is a vertex with $D=1$, or for $\tau=D$ possibly $(-1,\frac32)$, with $D=2$ and $Dk'=4$. The case of a minimal vertex with $D>1$ follows by replacing $n$ with $-n$.
\end{proof}

\begin{proposition}\label{prop:enum25}
There are $789{,}460{,}684$ coordinate-free primitive rank-two sublattices of $\Z^4$ with squared covolume at most $11{,}331$. Of these, $21{,}301{,}056$ are contained in a signed permutation image of $\Wone$ or $\Wtwo$, and $36{,}832{,}896$ further ones in some $W_m$ with $|m|_1=5$. Each of the remaining ones except $18{,}960$ has a point with $G\ge\frac25$: a grid point $(i/Q,j/Q)$ with $Q\in\{2,10,20,30\}$, or else the point returned by the exact maximization of Section~\ref{sec:comp}, at which $G$ is evaluated exactly. The remaining $18{,}960$ subtori have $\ML<\frac25$. They have squared covolume at most $1{,}250$, form $141$ orbits under signed coordinate permutations, and their values are $\frac7{19}$, $\frac38$, $\frac8{21}$, $\frac5{13}$, $\frac{12}{31}$, $\frac9{23}$, $\frac{13}{33}$ and $\frac{15}{38}$, all of them in $\mathcal R$.
\end{proposition}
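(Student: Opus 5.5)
This statement is a computational certificate, so the plan is to describe an exact enumeration together with the reasoning that makes its output a proof.

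\emph{Step 1: enumeration.} We reuse the row Hermite normal form enumeration of Section~\ref{sec:two}. A primitive rank-two sublattice has a basis $s,r$ with pivot columns $p<q$. The squared covolume is the sum of the squared $2\times2$ minors, which bounds every minor by $\sqrt{11{,}331}<107$. Fixing the pivots and the entries $s_p,r_q$ then bounds the remaining entries, and we use the reduction $0\le s_q<r_q$. For each basis we check that the Pl\"ucker coordinates have gcd $1$, that the squared covolume is at most $11{,}331$, and that no coordinate vanishes on both basis vectors. As before, completeness is cross-checked for small covolume against a count of primitive integer solutions of the Pl\"ucker relation. The total is the stated $789{,}460{,}684$.

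\emph{Step 2: classify by special three-dimensional subtori.} Containment in a signed permutation image of $\Wone$ or $\Wtwo$ is tested exactly as in Proposition~\ref{prop:third-enum}. Containment in some $W_m$ with $|m|_1=5$ is tested the same way: we list all primitive $m$ with $|m|_1=5$ up to sign and check $m\cdot s=m\cdot r=0$. Every such $m$ is a signed permutation of one of the five vectors in Lemma~\ref{lem:Wodd}. Sublattices in the first class are counted once and removed; the remaining ones in $W_m$ with $|m|_1=5$ give the second count.

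\emph{Step 3: certify the rest.} For every remaining lattice we evaluate $G$ exactly, in rational arithmetic, at the grid points $i s/Q+j r/Q$ with $Q\in\{2,10,20,30\}$. Any point with $G\ge\frac25$ is a certificate that $\ML\ge\frac25$. The survivors are passed to the exact maximization of Section~\ref{sec:comp}. On a two-dimensional torus $G$ is piecewise linear, and its maximum is attained at a vertex of the arrangement of lines where coordinates equal half-integers or meet the $\pm$ branches of $\norm{\cdot}$. This is the two-dimensional analogue of Lemma~\ref{lem:corner}: a maximizer is either a point where $2x_i\in\Z$ for two independent coordinates, or a point where three coordinate forms share a common value $\pm x_i\equiv\pm x_j\equiv\pm x_k$. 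Enumerating these finitely many rational candidates modulo the lattice gives $\ML$ exactly. The output is a point with $G\ge\frac25$ in most cases, and an exact value below $\frac25$ for $18{,}960$ lattices.

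\emph{Step 4: verify the survivors.} For the survivors we record the squared covolume, which is at most $1{,}250$. We reduce them to orbits under the group of order $384$ by taking a canonical form of the Pl\"ucker vector up to signed permutations, which yields $141$ orbits. Finally we check each value against $\mathcal R$ through $1\le 2q-5p\le4$.

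\emph{Main obstacle.} The main difficulty is the reliability and cost of the exact maximization. It must be provably complete, meaning every candidate vertex of the cell decomposition is enumerated, and it must be fast enough for the lattices that escape the grid. I would first bound the candidate vertices by the coordinate forms restricted to the plane; the number of lines is controlled by the minors, hence by the covolume. I would then rely on the accompanying independent checker: each discarded lattice comes with an explicit point, and each retained value comes with an optimality certificate in the form of the vertex list.
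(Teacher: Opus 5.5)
Your proposal follows the paper's own route: the Hermite normal form enumeration of Section~\ref{sec:two} with the larger bound, exact character tests for $\Wone$, $\Wtwo$ and the $176$ characters with $|m|_1=5$, grid certificates with $Q\in\{2,10,20,30\}$, the exact maximizer of Section~\ref{sec:comp} for the lattices that escape the grid, and a final check of the survivors' covolumes, orbits and values against $1\le 2q-5p\le4$. Your candidate set for the maximizer leaves out the paper's case of two proportional forms at a common level with a third form at a half-integer, but this is harmless: when $\ML<\frac12$ you can slide along the optimal face of the local linear program until a third independent branch becomes active, so some maximizer is always a vertex of one of the types you list.
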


\begin{proof}[Proof of Theorem~\ref{thm:upto25}]
(i) Let $\ML(U)<\frac25$. If $\ML(U)<\frac13$, Theorem~\ref{thm:second} gives $\ML(U)=\frac s{3s+1}$. Let $\frac13\le\ML(U)<\frac25$. If $U$ lies in a signed permutation image of $\Wone$ or $\Wtwo$, then $\ML(U)\le\frac13$, so $\ML(U)=\frac13\in\mathcal R$. If $U$ lies in some $W_m$ with $|m|_1=5$, then up to symmetry $U=\varphi_\tau(U_n)$ by Lemma~\ref{lem:Wodd}, and Lemma~\ref{lem:five} applies. Otherwise $\covol(U)^2\le11{,}331$ by Lemma~\ref{lem:generic25}, and Proposition~\ref{prop:enum25} applies.

(ii) For a triple $v$ of distinct positive integers, $T_v\times\T$ is a coordinate-free two-dimensional subtorus with $\ML(T_v\times\T)=\ML(v)$, so by (i) every value below $\frac25$ is $\frac s{3s+1}$ or lies in $\mathcal R$. A triple with a repeated entry has the value of a pair, $\frac s{2s+1}$ or $\frac12$, and only $\frac13\in\mathcal R$ occurs below $\frac25$. Conversely, $\frac s{3s+1}=\ML(1,2,3s)$ by \cite[Theorem~5.1]{Kravitz}, and by \eqref{eq:R} it remains to prove the two formulas. The product $T_{(1,4,k)}\times\T$ is $\varphi_A(U_{(k,1,0)})$ up to symmetry, and $(k,1,0)\cdot P_A(\varepsilon)=[\frac{2k+2}5-\varepsilon(k+1),\ \frac{2k+3}5+\varepsilon(\frac k4+1)]$. For $k=5j+3$ this is $[2j+\frac85-\varepsilon(k+1),\ 2j+\frac95+\varepsilon(\frac k4+1)]$, which first contains an integer at $\varepsilon=\min(\frac4{5(k+4)},\frac3{5(k+1)})=\frac3{5(k+1)}$ for $k\ge8$, a value at most $\frac1{15}$. By Lemma~\ref{lem:fivecells}, $\ML(1,4,5j+3)=\frac25-\frac3{5(5j+4)}=\frac{2j+1}{5j+4}$. Similarly $T_{(2,3,k)}\times\T=\varphi_B(U_{(k,1,0)})$, and for $k=5j$ the interval $(k,1,0)\cdot P_B(\varepsilon)=[j+\frac25-\varepsilon(\frac k2+1),\ j+\frac35+\varepsilon(\frac k3+1)]$ first contains an integer at $\varepsilon=\frac4{5(k+2)}\le\frac1{15}$ for $j\ge2$, so $\ML(2,3,5j)=\frac25-\frac4{5(5j+2)}=\frac{2j}{5j+2}$.
\end{proof}

\begin{corollary}\label{cor:acc25}
\begin{enumerate}[label=\textup{(\roman*)},leftmargin=*]
\item The accumulation points of the four-runner spectrum in $(0,\frac25]$ are exactly the three-runner values there, $\{\frac s{3s+1}:s\ge1\}\cup\mathcal R\cup\{\frac25\}$. In the notation of \cite{GiriKravitz},
\[
 \operatorname{acc}(\mathcal S(4))\cap[\tfrac1{10},\tfrac12)=\mathcal S(3)\cap[\tfrac1{10},\tfrac12),
\]
the case $n=4$ of \cite[Conjecture~1.5]{GiriKravitz} on $[\frac1{10},\frac12)$.
\item $\frac7{50}\notin\mathcal S_2(4)$. Since $\frac7{50}\in\mathcal S^*(3)=\mathcal S^*_2(4)$ by \cite[Proposition~8.1 and Lemma~3.2]{GiriKravitz}, the spectrum of two-dimensional subtori of $\T^4$ is strictly smaller than the spectrum of two-dimensional closed subgroups.
\item In the notation of \cite{JainKravitz}, with $\operatorname{Prog}(\alpha,\beta)=\{\alpha s+\beta:s\ge0\}$ and $\frac c{5}\operatorname{Prog}(\alpha,\beta)$ read as $\{\frac c{5x}:x\in\operatorname{Prog}(\alpha,\beta)\}$,
\[
 \mathcal S_1(3)\cap(\tfrac1{10},\tfrac16]=\Big(\big(\tfrac1{10}+\tfrac45\operatorname{Prog}(5,7)\big)\cup\big(\tfrac1{10}+\tfrac35\operatorname{Prog}(5,9)\big)\Big)\cap(\tfrac1{10},\tfrac16].
\]
So the finite symmetric difference in \cite[Theorem~1.4]{JainKravitz} has no element in $(\frac1{10},\frac16]$. It consists of the single value $\frac3{14}$, the term $s=0$ of the first progression, which lies outside that interval.
\end{enumerate}
\end{corollary}
\begin{proof}
(i) Let $A$ be the set of accumulation points in $(0,\frac25]$ of the values of all quadruples, repeated speeds allowed, and $A'$ the same for quadruples of distinct integers. Then $A'\subseteq A$. Every point of $A$ is the value of a two-dimensional subtorus by \cite[Theorem~1.4]{GiriKravitz}, which is coordinate-free since the value is positive, so it is $\frac25$ or lies in the set of Theorem~\ref{thm:upto25}(i). Every three-runner value, in particular $\frac25=\ML(1,4,6)$, is a lower accumulation point of values of quadruples of distinct integers by \cite[Theorem~1.3]{GiriKravitz}, whose proof, from \cite{Kravitz}, adds one large fourth speed to a triple. With Theorem~\ref{thm:upto25}(ii) this gives $A=A'=\mathcal S(3)\cap(0,\frac25]$. (ii) $\frac9{25}\notin\mathcal R$ since $2\cdot25-5\cdot9=5$, and a two-dimensional subtorus contained in a coordinate hyperplane has value $0$. (iii) Since $D=\frac12-\ML$, we have $\frac12-\frac{2j}{5j+2}=\frac1{10}+\frac4{5(5j+2)}$ and $\frac12-\frac{2j+1}{5j+4}=\frac1{10}+\frac3{5(5j+4)}$, and $5j+2$, $5j+4$ run through $\operatorname{Prog}(5,7)$, $\operatorname{Prog}(5,9)$ for $j\ge1$. The terms with $j=1$ are $\frac3{14}$ and $\frac16$. Now apply Theorem~\ref{thm:upto25}(ii) and \eqref{eq:R}.
\end{proof}

The same two ingredients apply one level higher. By Lemmas~\ref{lem:shortchar} and~\ref{lem:Wodd}, a coordinate-free two-dimensional subtorus with $\ML<\frac37$ lies in some $W_m$ with $|m|_1\in\{3,5,7\}$ or has bounded covolume, and the ten types of $W_m$ with $|m|_1=7$ all have loneliness $\frac37$. The contact argument of Lemma~\ref{lem:five} gives $1\le3q-7p\le6$ at vertices with determinant $1$, but it no longer excludes the other vertices: the subtorus $U=\{(10X+Y,\,-49X-5Y,\,9X,\,Y)\}$ of $W_{(4,1,1,1)}$ has $\ML(U)=\frac{43}{103}$, and $3\cdot103-7\cdot43=8$. This value is again a three-runner value, $\frac{43}{103}=\ML(1,102,25)$, so the level $\frac37$ needs a finer identification of the three-runner values than \eqref{eq:R}. We have not carried this out.

\begin{remark}[Why slicing at a maximizer does not suffice]\label{rem:slicing}
Giri and Kravitz observe that proving $\mathcal S_2(4)=\mathcal S(3)$ would settle \cite[Conjecture~1.5]{GiriKravitz} for $n=4$, and that the slicing argument behind $\mathcal S_2(n)\subseteq\mathcal S^*(n-1)$ would have to be refined. Let $x^*$ be a maximizer of $G$ on $U$ and let $a,b$ be active coordinates, $\norm{x^*_a}=\norm{x^*_b}=\ML(U)$, with $x^*_a=\sigma x^*_b$. The subgroup $U\cap\{x_a=\sigma x_b\}$ contains $x^*$, and if $x^*$ lies on its identity component, a circle, then $\ML(U)$ is the loneliness of a circle with a repeated speed, a three-runner value. For all $916{,}492$ coordinate-free two-dimensional subtori of covolume at most $20$, every one of the $1{,}348{,}608$ maximizers at which two independent coordinate forms are active, over the $582{,}624$ subtori with $\frac13<\ML(U)<\frac12$, lies on such a circle. This fails in general. The subtorus
\[
 U=\{(y_1,y_2,y_3,u):4y_1+21y_2+5y_3=0,\ u=2y_1+3y_2+3y_3\}
\]
has $\ML(U)=\frac{13}{30}$, its only maximizers are $\pm x^*$ with $x^*=(\frac{13}{30},\frac{13}{30},\frac{13}{30},\frac7{15})$, and $x^*$ lies on a non-identity component of each of its three slices. Indeed, with $\lambda=(4,21,5)$ and $c$ the index other than $a,b$, the slice $U\cap\{y_a=y_b\}$ has $\gcd(\lambda_c,30)\in\{2,3,5\}$ components, the point $(\frac r{30},\frac r{30},\frac r{30},\frac{8r}{30})$ of $U$ lies on the component indexed by $r$ modulo $\gcd(\lambda_c,30)$, and $x^*$ is the point with $r=13$. Nevertheless $\frac{13}{30}=\ML(1,8,29)$, and the circle of $(1,29,8)$ at the times $r/30$ reproduces the values of $G$ at these thirty points, which form the finite subgroup $U\cap\{y_1=y_2=y_3\}$. Among the $19{,}000$ distinct subtori $\{(y,u):\lambda\cdot y=0,\ \mu u=\theta\cdot y\}$ with $\lambda\in\Z_{>0}^3$ primitive, $\lambda_1+\lambda_2+\lambda_3\le42$, $\gcd(\lambda_c,\lambda_1+\lambda_2+\lambda_3)>1$ for every $c$, $\theta\in\{-4,\dots,4\}^3$ and $1\le\mu\le3$, the $602$ with no maximizer on a slice circle take $14$ distinct values, all of them three-runner values. So a proof of $\mathcal S_2(4)=\mathcal S(3)$ needs a second mechanism besides slicing, such as the circle through this finite subgroup. Below $\frac25$ such a mechanism is the arithmetic of the contact vertex in Lemma~\ref{lem:five}, which identifies the values without producing a circle. Section~\ref{sec:obstr} shows that the failure is not isolated: it occurs in an infinite family with values tending to $\frac12$, with a loss of at least $N^{-3}$ for every circle with a repeated speed.
\end{remark}

\section{Slicing obstructions of unbounded order}\label{sec:obstr}
Remark~\ref{rem:slicing} exhibits one subtorus whose maximizers lie off every slice circle. This section gives an infinite family with the same property, in which the slices through the maximizers have arbitrarily many components and the values tend to $\frac12$, and it makes the loss quantitative. Call a coordinate-free circle contained in some $\{x_i=\pm x_j\}$, $i\ne j$, a \emph{repeated-speed circle}. For positive integers $A,B,C$ with $\gcd(A,B,C)=1$ put $\lambda=(A,B,C)$, $N=A+B+C$, $H=N/2$ when $N$ is even, and
\[
 U(A,B,C)=\{(y_1,y_2,y_3,u)\in\T^4:\ Ay_1+By_2+Cy_3=0,\ u=Hy_2\}.
\]
This is the graph of $y\mapsto Hy_2$ over the connected kernel of the primitive character $\lambda$ on $\T^3$, hence a two-dimensional subtorus, and it is coordinate-free since $A,B,C\ne0$.

\begin{theorem}\label{thm:obstr}
Let $4\mid N$ and $N\ge2(A+C)+2$, and put $r=\frac N2-1$ and $L=\frac rN=\frac12-\frac1N$.
\begin{enumerate}[label=\textup{(\roman*)},leftmargin=*]
\item $\ML(U(A,B,C))=L$, and the only maximizers are $\pm p$ with $p=(L,L,L,\frac12)$. Moreover $\ML(1,\frac N2,N-1)=L$.
\item For $\{i,j,k\}=\{1,2,3\}$ the slice $U(A,B,C)\cap\{y_i=y_j\}$ has $\gcd(\lambda_k,N)$ components, and $p$ lies on its identity component if and only if $\gcd(\lambda_k,N)=1$. A repeated-speed circle in $U(A,B,C)$ attains $L$ if and only if one of $\gcd(A,N)$, $\gcd(B,N)$, $\gcd(C,N)$ equals $1$.
\item If $\gcd(\lambda_k,N)>1$ for $k=1,2,3$, then $\ML(T)\le L-N^{-3}$ for every repeated-speed circle $T\subseteq U(A,B,C)$.
\end{enumerate}
\end{theorem}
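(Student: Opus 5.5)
The plan is to work directly in the coordinates $(y_1,y_2,y_3,u)$. The identities $A+B+C=N$ and $H=N/2$ make both the maximizer and the slices explicit.

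\emph{Step 1 (part (i)).} First I check that $p\in U(A,B,C)$. We have $\lambda\cdot(L,L,L)=NL=r\in\Z$, and $u=HL=r/2\equiv\frac12$ because $r=\frac N2-1$ is odd when $4\mid N$. So $G(p)=L$.

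For the upper bound, suppose $G(x)\ge L$ at some $x\in U$. Write $y_i=\frac12+s_i$ with $|s_i|\le\frac1N$. Then $\lambda\cdot y=\frac N2+\sum\lambda_is_i$, and $\frac N2$ is an even integer. So $\sum\lambda_is_i\in\Z$ with absolute value at most $1$, which gives two cases.
\begin{itemize}
\item If $\sum\lambda_is_i=\pm1$, every $|s_i|$ must equal $\frac1N$ with a common sign. This forces $y=\pm(L,L,L)$ and hence $x=\pm p$.
\item If $\sum\lambda_is_i=0$, then $u\equiv\frac N4+Hs_2$ with $\frac N4\in\Z$. So $\norm{u}\ge L$ forces $|s_2|\ge\frac1N-\frac2{N^2}$. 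Then $B|s_2|=|As_1+Cs_3|\le\frac{A+C}N$ gives $B\le A+C+1$. This contradicts $N\ge2(A+C)+2$, which is equivalent to $B\ge A+C+2$.
\end{itemize}
This proves $\ML(U)=L$ and the uniqueness of $\pm p$.

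For $\ML(1,\frac N2,N-1)$, the time $t=r/N$ gives the values $L,\frac12,L$. For the upper bound I run the same near-$\frac12$ analysis: write $t=\frac12+s$. Then $\norm{t}\ge L$ forces $|s|\le\frac1N$, and $\norm{(N-1)t}\ge L$ forces $|Ns|\ge 1-\frac2N$ up to the parity of $N-1$. Finally the speed $\frac N2$ excludes $s$ near $0$, so the only survivors are $t=\pm r/N$. I would cross-check this against the Corner lemma.

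\emph{Step 2 (part (ii)).} On the slice $y_i=y_j=y$, the defining relation becomes $(N-\lambda_k)y+\lambda_ky_k\in\Z$, equivalently $Ny+\lambda_k(y_k-y)\in\Z$. This closed subgroup of $\T^2$ has $g=\gcd(\lambda_k,N)$ components, indexed by the value of $Ny$ modulo $g$. At $p$ this index is $r\bmod g$. Since $\gcd(r,N)=\gcd(\frac N2-1,2)=1$, the point $p$ lies on the identity component iff $g=1$.

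Next, a repeated-speed circle attains $L$ iff it passes through $\pm p$, by the uniqueness in (i). At $p$ the equalities $x_a=-x_b$ and $u=\pm y_i$ fail, because $L\ne-L$ and $\frac12\ne\pm L$ modulo $1$. So only the slices $\{y_i=y_j\}$ can matter, and in them the only circle is the identity component. This gives the characterization in (ii).

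\emph{Step 3 (part (iii)), the main obstacle.} Compactness together with (ii) gives $\ML(T)<L$ for every repeated-speed circle $T$. To make the gap quantitative I would bound the speeds of every such circle and then use the Corner lemma.

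A repeated-speed circle is the identity component of $U\cap\{x_a=\pm x_b\}$. Its direction vector is obtained from the $2\times2$ minors of a basis of $\Lambda_U$. For the slices $y_i=y_j$ the direction is $(\lambda_k,\lambda_k,-(N-\lambda_k),H\lambda_k)/g$, with $y_k$ in position $k$. All these speeds are at most $N^2/2$, and I expect the same bound, up to a small constant, for the slices $y_i=-y_j$ and $u=\pm y_i$.

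By Lemma~\ref{lem:corner}, $\ML(T)$ is then a rational with denominator at most $2\max(u+w)\le N^2$, or twice a single speed. Since $L=r/N$, the difference $L-\ML(T)$ is a positive rational with denominator at most $N\cdot N^2$, hence at least $N^{-3}$.

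The delicate point is controlling the speeds uniformly over all six slice types, in particular $u=\pm y_i$, where $H$ enters. If the crude bound there exceeds $N^2$, I would first try to sharpen the denominator estimate. One way is to note that the contact point of Lemma~\ref{lem:corner} for a circle lying close to $p$ must involve two of the speeds coming from $y$. Another is to use the exact form of the slice circles instead of a generic minor bound.
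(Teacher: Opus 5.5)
Your plan follows the paper's proof in all three parts. In (i) you use the expansion $y_i=\frac12+z_i$ with $\lambda\cdot z\in\{-1,0,1\}$ and exclude the case $0$ by $B\ge A+C+2$. In (ii) you count slice components by $\gcd(\lambda_k,N)$, use $\gcd(r,N)=1$, and exclude $x_a=-x_b$ and $u=\pm y_i$ at $p$. In (iii) you combine a speed bound with Lemma~\ref{lem:corner}.

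The one genuine gap is that speed bound. You check it only on the slices $y_i=y_j$, and for $y_i=-y_j$ and $u=\pm y_i$ you say you expect it ``up to a small constant''. No constant can be lost here. If the speeds of $T$ are only bounded by $cN^2/2$, the corner denominator is $d\le cN^2$, and you get $L-\ML(T)\ge 1/(cN^3)$, not $N^{-3}$.

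The paper closes this with the second option you mention: explicit vectors of $\Lambda_U$ on each slice.
\begin{itemize}
\item For $y_i=\sigma y_j$, take $w_i=\lambda_k$, $w_j=\sigma\lambda_k$, $w_k=-(\lambda_i+\sigma\lambda_j)$, $w_4=Hw_2$.
\item For $y_1=\sigma u$, take $(\sigma HC,\,C,\,-(\sigma HA+B),\,HC)$.
\item For $y_3=\sigma u$, take $(-(B+\sigma HC),\,A,\,\sigma HA,\,HA)$.
\end{itemize}
Each is a nonzero integer multiple of the primitive direction of the slice circle. Each has entries of absolute value at most $HN$, using $HA+B\le H(A+B)$ and $B+HC\le H(B+C)$. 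So every speed of $T$ is at most $HN$, and $d\le 2HN=N^2$.

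The slice $y_2=\pm u$ needs no bound. With $u=Hy_2$ it reads $(H\mp1)y_2=0$, so its identity component has $y_2=u=0$ and is not coordinate-free.

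Both (ii) and (iii) also use that $T$ is the identity component of a one-dimensional group $U\cap\{x_a=\pm x_b\}$. This requires that none of the twelve equations $x_a=\pm x_b$ holds on all of $U$, which follows since $\lambda$ has no zero entry and $H>1$.

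There are three smaller corrections.
\begin{itemize}
\item In the triple argument, $N-1$ is odd, so the condition from the speed $N-1$ is $\norm{(N-1)s}\le\frac1N$. For $|s|\le\frac1N$ this means $|s|\le\frac1{N(N-1)}$ or $|s|=\frac1N$. It does not give $|Ns|\ge1-\frac2N$. Your next sentence does dispose of the first branch correctly, via $\norm{\frac N2t}=\norm{\frac N2s}\le\frac1{2(N-1)}<L$.
\item In (ii) the components are not labelled by ``$Ny$ modulo $g$''. They are labelled by the integer $Ny+\lambda_k(y_k-y)$ modulo $g$, equivalently by the primitive character $\frac{N-\lambda_k}g\,y+\frac{\lambda_k}g\,y_k$ in $\frac1g\Z/\Z$. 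At $p$ this label is still $r$.
\item For $k=2$ the last entry of your slice direction is $-H(N-\lambda_2)/g$ rather than $H\lambda_2/g$. This does not change the bound.
\end{itemize}
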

\begin{proof}
(i) Since $\lambda\cdot(L,L,L)=r$ and $Hr/N=\frac N4-\frac12$, the point $p$ lies on $U$, and $G(p)=L$. Let $G(y,u)\ge L$ and write $y_i=\frac12+z_i$ with $|z_i|\le\frac1N$. As $\frac N2\in\Z$, the number $\lambda\cdot z$ is an integer in $[-1,1]$. If it is $\pm1$, positivity of $\lambda$ forces $z_i=\pm\frac1N$ for all $i$, that is $(y,u)=\mp p$. If it is $0$, then $B|z_2|=|Az_1+Cz_3|\le\frac{A+C}N$, and since $\frac H2=\frac N4\in\Z$ we get $u\equiv Hz_2$ and $\norm u\le\frac{A+C}{2B}$. With $S=A+C$ and $B=N-S$, the inequality $\frac S{2B}<L$ is equivalent to $N^2-2(S+1)N+2S>0$, which holds for $N\ge2S+2$. So this case is impossible. For the triple, $t=\frac rN$ gives the value $L$, since $\frac N2\cdot\frac rN\equiv\frac12$ and $\norm{(N-1)t}=\norm{t}$. Conversely, if all three distances exceed $L$ at some $t=\frac12-\delta$ with $0\le\delta<\frac1N$, then $\norm{(N-1)t}=\frac12-\norm{(N-1)\delta}$ forces $(N-1)\delta<\frac1N$, and then $\norm{\frac N2t}=\norm{\frac N2\delta}\le\frac N2\delta<\frac1{2(N-1)}<L$, a contradiction.

(ii) On $y_i=y_j=t$ the relation reads $(N-\lambda_k)t+\lambda_ky_k=0$, whose kernel in $\T^2$ has $\gcd(N-\lambda_k,\lambda_k)=\gcd(\lambda_k,N)=g$ components, labelled by the value of the primitive character $\frac{N-\lambda_k}g t+\frac{\lambda_k}g y_k$ in $\frac1g\Z/\Z$. At $p$ this value is $\frac rg$, and $\gcd(r,N)=1$ because $r=\frac N2-1$ is odd and prime to $\frac N2$. So $p$ is on the identity component exactly when $g=1$. A repeated-speed circle attaining $L$ contains $\pm p$ by (i). At $p$ the last coordinate has distance $\frac12\ne L$, and $y_i=-y_j$ would give $\frac{2r}N\in\Z$. So such a circle lies in some $\{y_i=y_j\}$ and is the identity component of the slice, which contains $p$ only if $g=1$. Conversely, if $g=1$ the slice is a circle through $p$, coordinate-free because no coordinate of $p$ vanishes.

(iii) None of the twelve equations $x_i=\pm x_j$ holds on all of $U$, because $\lambda$ has no zero entry and $H>1$. So a repeated-speed circle $T\subseteq U$ is the identity component of the one-dimensional group $U\cap\{x_i=\pm x_j\}$ for one of them. For $y_i=\sigma y_j$ the vector $w$ with $w_i=\lambda_k$, $w_j=\sigma\lambda_k$, $w_k=-(\lambda_i+\sigma\lambda_j)$, $w_4=Hw_2$ lies in $\Lambda_U$ on that slice. For $y_1=\sigma u$ and $y_3=\sigma u$ the vectors $(\sigma HC,C,-(\sigma HA+B),HC)$ and $(-(B+\sigma HC),A,\sigma HA,HA)$ do the same, and $y_2=\pm u$ forces $y_2=u=0$ on the identity component, which is not coordinate-free. So $T$ has speeds of absolute value at most $HN$, and by Lemma~\ref{lem:corner} its loneliness is $a/d$ with integers $a$ and $1\le d\le2HN=N^2$. By (ii), $a/d<L$, so $L-\frac ad=\frac{rd-aN}{Nd}\ge\frac1{N^3}$.
\end{proof}

For instance $A=m$, $C=m+1$, $N=4m(m+1)(2m+1)K$ and $B=N-2m-1$, with $m\ge2$ and $K\ge1$, satisfy the hypotheses with $\gcd(\lambda_k,N)$ equal to $m$, $2m+1$ and $m+1$. So the three slices through each maximizer have arbitrarily many components, none of their identity components meets a maximizer, and the values $L=\frac12-\frac1N$ tend to $\frac12$. All of them are three-runner values by (i), as Conjecture~1.5 of \cite{GiriKravitz} predicts. For example $U(2,55,3)$ has $L=\frac{29}{60}$ and component orders $2,5,3$.

The loss in (iii) is not an artifact of the bound. Inside the same subtori, circles with four distinct speeds come closer to $L$ than any repeated-speed circle can.

\begin{theorem}\label{thm:fourspeed}
Let $4\mid N$, $N\ge12$ and $\ell\ge1$. Then
\[
 \ML\Big(1,\ \frac N2,\ 2N\ell-3,\ 3N\ell-\frac N2-2\Big)=\frac{(N-2)\ell-1}{2N\ell-2}=L-\frac1{N(N\ell-1)}.
\]
If moreover $60\mid N$ and $\ell\ge N+1$, then this circle lies in $U(2,N-5,3)$ up to signs, and its loneliness exceeds that of every repeated-speed circle in $U(2,N-5,3)$.
\end{theorem}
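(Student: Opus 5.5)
The plan has three parts: a lower bound from an explicit time, a matching upper bound from the distance to $\tfrac12$, and a membership check in $U(2,N-5,3)$ followed by a comparison with Theorem~\ref{thm:obstr}(iii).

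\textbf{Lower bound.} Write $v=(1,\frac N2,b,c)$ with $b=2N\ell-3$ and $c=3N\ell-\frac N2-2$, and put $D=2N\ell-2$ and $V=\frac{(N-2)\ell-1}{D}$. I will exhibit a time near $\frac12$, namely $t=\frac{(N-2)\ell-1}{D}$, or its reflection $\frac12+\delta$ with $\delta=\frac1{N(N\ell-1)}$ up to sign. Then $\norm t=V$. For the speed $\frac N2$, note that $\frac N2\cdot\frac12$ is an integer (because $4\mid N$), so $\norm{\frac N2 t}=\frac N2|\delta|=\frac1{2(N\ell-1)}$. That is small, so this particular time will not work. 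I therefore expect the maximizer to be elsewhere, and I will locate it with Lemma~\ref{lem:corner}: the optimal $t$ has denominator dividing $u+w$ for two speeds. The natural pair is $b+1=2N\ell-2=D$, so I take $t=j/D$ with $j$ chosen so that $\norm{t}$ and $\norm{bt}=\norm{-t+jD/D}=\norm t$ equal $V$, that is $j=(N-2)\ell-1$. The remaining checks are that $\norm{\frac N2 j/D}\ge V$ and $\norm{cj/D}\ge V$. Both are exact congruence computations modulo $D$. I will use $cD$-relations of the form $2c=3b-N/\ldots$, reduced mod $D$, to write $\frac N2 j$ and $cj$ mod $D$ explicitly and confirm that they lie in $[jD^{-1}\cdot D,\,D-j]$. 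The identity $V=L-\frac1{N(N\ell-1)}$ is then algebra.

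\textbf{Upper bound.} I need to show that no $t$ has all four distances exceeding $V$. Since $V$ is within $O(1/N)$ of $\frac12$, every coordinate must be close to $\frac12$. Following the proof of Theorem~\ref{thm:obstr}(i), I will write $t=\frac12+z$ with $|z|\le\frac12-V$ and, for each speed $s$, $st=\frac12+z_s$ with $|z_s|<\frac12-V$. The speed $1$ gives $|z|<\frac12-V\approx\frac1N$. The speed $\frac N2$ gives $\norm{\frac N2 z}>V$, which forces $\frac N2 z$ near a half-integer, hence $z$ near $\pm\frac1N$ within $O(1/N^2\ell)$. Then the speed $b$ pins $z$ within $O(1/(N^2\ell))$ of a lattice point of spacing $1/b$. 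Finally, $c$ is an integer combination relation (roughly $c\approx\frac32 b$) that produces a contradiction unless $t=j/D$. I expect this case analysis to be the main obstacle. To keep it finite, I will first try to reduce it via the corner lemma: the loneliness equals $\max$ over $t=m/(u+w)$, so I only need to bound the candidate denominators $u+w$ over the six pairs and verify the finitely many residue patterns symbolically in $\ell$ and $N$.

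\textbf{Second statement.} I will check membership by finding $(y_1,y_2,y_3,u)=t\cdot(\pm\text{speeds})$ with $2y_1+(N-5)y_2+3y_3\equiv0$ and $u=\frac N2y_2$. Matching the speed $\frac N2$ to $u$ and $1$ to $y_2$ forces $u=Hy_2$. I then need signs with $2(\pm b)+(N-5)+3(\pm c)=0$, or with the roles of $b$ and $c$ swapped. Only after this equation is found (for instance $3b=2c+\ldots$) will I use $\ell\ge N+1$ and $60\mid N$ to ensure primitivity and coordinate-freeness. For the comparison, $60\mid N$ gives $\gcd(2,N)=2$, $\gcd(N-5,N)=5$ and $\gcd(3,N)=3$, so Theorem~\ref{thm:obstr}(iii) applies (its hypothesis $N\ge2(A+C)+2=12$ holds). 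Hence every repeated-speed circle in $U(2,N-5,3)$ has loneliness at most $L-N^{-3}$. Finally, $\frac1{N(N\ell-1)}<\frac1{N^3}$ exactly when $N\ell-1>N^2$, which holds for $\ell\ge N+1$.
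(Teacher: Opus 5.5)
\textbf{The gap is the upper bound.} The displayed formula needs $\ML\le V$ with $V=\frac{(N-2)\ell-1}{2N\ell-2}$, and what you give for it is a plan, two of whose steps fail as stated.

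First, the window is too wide. Put $t=\frac12+z$ with $z\ge0$ by symmetry. The speeds $1$ and $\frac N2$ only confine $z$ to $\bigl(\frac{(N-2)\ell-1}{N(N\ell-1)},\frac{\ell}{N\ell-1}\bigr)$. This interval has length $\frac{2\ell+1}{N(N\ell-1)}\approx\frac2{N^2}$, not $O(1/(N^2\ell))$. So the speed $b=2N\ell-3$ leaves about $4\ell/N$ disjoint admissible windows, unboundedly many as $\ell$ grows. Your claim that $c$ gives a contradiction unless $t=j/D$ must then hold uniformly over all of them, and $c\approx\frac32b$ is too coarse for that.

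Second, the corner-lemma fallback is not finite. For the pair with $u+w=b+c\approx5N\ell$ there are on the order of $\ell/N$ candidate times $m/(u+w)$ in that window, so there is no finite list of residue patterns to check.

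A smaller slip in the lower bound: $\frac12-\frac jD=\frac\ell{N\ell-1}$, not $\frac1{N(N\ell-1)}$. At $t=\frac jD$ one has $\norm{\frac N2t}=\frac12-\frac1{2(N\ell-1)}$ and $\norm{ct}=\frac{j+1}D$, so the time you finally use does work. The comparison in the second statement needs only this lower bound, so that part of your proposal is complete.

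\textbf{The missing idea.} Use the exact relation $2c+(N-5)-3b=0$, applied to the deviations from $\frac12$. This relation says the circle $t(c,1,-b,\frac N2)$ lies in $U(2,N-5,3)$ for every $N$ with $4\mid N$; you found it only for the second statement.

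Suppose all four distances exceed $V=\frac12-\rho$, where $\rho=\frac\ell{N\ell-1}$. Write $ct,t,-bt\equiv\frac12+z_1,\frac12+z_2,\frac12+z_3$ with $|z_i|<\rho$. Since $2+(N-5)+3=N$ and $\frac N2\in\Z$, the number $2z_1+(N-5)z_2+3z_3$ is an integer of absolute value less than $N\rho<2$.
\begin{enumerate}[label=\textup{(\arabic*)},leftmargin=*]
\item If it is $0$, then $|z_2|<5\rho/(N-5)$, and then $\norm{\frac N2t}<V$ for $N\ge12$.
\item Take $z_2>0$. If it is $-1$, then $(N-5)z_2<5\rho-1<0$, which is impossible.
\item If it is $1$: since $b$ is odd and $\frac c2\in\Z$, we get $z_3=p-bz_2$ and $z_1=cz_2-\frac{3p}2+\frac12$ with $p$ even. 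The three bounds $z_i<\rho$ are then compatible only if $\frac{b-2(b+c)\rho}{N-5}<p<(b+1)\rho=2\ell$. The left end exceeds $2\ell-2$ for $N\ge12$, so no even $p$ fits.
\end{enumerate}

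The paper does the same thing in two dimensions. It shows that $\{G\ge L-\varepsilon\}$ on $U(2,N-5,3)$ is $\pm$ a small triangle for $\varepsilon\le\frac1{N(N-1)}$. It then reads off when the character $ks-t$ that cuts out the circle first takes an integer value on that triangle.
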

\begin{proof}
Put $U_N=U(2,B,3)$ with $B=N-5$. The map $(s,t)\mapsto(Bs+3t,\,s,\,-Bs-2t,\,Hs)$ is an isomorphism $\T^2\to U_N$ with inverse $s=y_2$, $t=y_1+y_3$. For $k\ge0$ the circle $\{t=ks\}$ has the speeds $1$, $H$, $B+2k$, $B+3k$ up to sign, and $k=N\ell-H+1$ gives the four speeds of the theorem.

Let $0\le\varepsilon\le\varepsilon_0=\frac1{N(N-1)}$. We claim that the points of $U_N$ with $G\ge L-\varepsilon$ are the points $(y,Hy_2)$ with $y=\pm\big((L,L,L)+\eta\big)$ and $\eta$ in the triangle $\{2\eta_1+B\eta_2+3\eta_3=0,\ \eta_i\ge-\varepsilon\}$. Write $y_i=\frac12+z_i$ with $|z_i|\le D=\frac1N+\varepsilon\le\frac1{N-1}$. Then $2z_1+Bz_2+3z_3$ is an integer of absolute value at most $ND<2$. If it is $0$, then $\norm u\le H|z_2|\le\frac{5HD}B\le\frac{5N}{2B(N-1)}<\frac{N-3}{2(N-1)}=L-\varepsilon_0$, where the strict inequality is $N^2-13N+15>0$, true for $N\ge12$. If it is $-1$, put $z_i=-\frac1N+\eta_i$. Then $\lambda\cdot\eta=0$ and $\eta_i\ge-\varepsilon$, the vertices of the triangle are $\varepsilon(H-1,-1,-1)$, $\varepsilon(-1,\frac5B,-1)$ and $\varepsilon(-1,-1,\frac{N-3}3)$, so $|\eta_i|\le\frac1N$ and the upper bounds $z_i\le D$ hold, and $\norm u=\frac12-H|\eta_2|\ge\frac12-H\varepsilon\ge L-\varepsilon$. The level $+1$ is the negative. This proves the claim.

At $p$, where $(s,t)=(\frac rN,\frac{2r}N)$, the character $ks-t$ takes the value $\frac{(k-2)r}N$, and $(k-2)r\equiv(-H-1)(H-1)=1-H^2\equiv1\pmod N$ since $4\mid N$. On the triangle it changes by $k\eta_2-\eta_1-\eta_3$, whose values at the three vertices are $-(k+H-2)\varepsilon$, $\frac{5k+2B}B\varepsilon$ and $-(k+\frac N3-2)\varepsilon$. So the circle meets $\{G\ge L-\varepsilon\}$ exactly when $[\frac1N-(k+H-2)\varepsilon,\ \frac1N+\frac{5k+2B}B\varepsilon]$ contains an integer, and the negative triangle gives the same condition. The first contact is with $0$ at $\varepsilon=\frac1{N(k+H-2)}=\frac1{N(N\ell-1)}\le\varepsilon_0$, because reaching $1$ needs $\varepsilon\ge\frac{(N-1)B}{N(5k+2B)}$, which is larger. This proves the formula. If $60\mid N$, then $\gcd(2,N)$, $\gcd(N-5,N)$ and $\gcd(3,N)$ are $2$, $5$ and $3$, and for $\ell\ge N+1$ the deficit $\frac1{N(N\ell-1)}$ is smaller than $N^{-3}$, so Theorem~\ref{thm:obstr}(iii) gives the comparison.
\end{proof}

So in $U(2,N-5,3)$ with $60\mid N$ the loneliness of circles with distinct speeds approaches the maximum $L$, a three-runner value, while every repeated-speed circle stays at distance at least $N^{-3}$ below it. A proof of $\mathcal S_2(4)=\mathcal S(3)$ by passing from a subtorus to a circle with a repeated speed inside it is therefore impossible in general, and every circle with a repeated speed that attains the three-runner value $L=\ML(1,\frac N2,N-1)$ lies outside the subtorus. Theorem~\ref{thm:fourspeed} is an explicit instance of the relative spectra of Jain and Kravitz \cite{JainKravitz}.

The obstruction is one phase of a larger family whose values are all three-runner values. For positive integers $A,B,C,H$ with $\gcd(A,B,C)=1$ put $S=A+C$, $N=S+B$ and
\[
 U_H(A,B,C)=\{(y_1,y_2,y_3,u)\in\T^4:\ Ay_1+By_2+Cy_3=0,\ u=Hy_2\},
\]
so that $U(A,B,C)=U_{N/2}(A,B,C)$.

\begin{proposition}[Coalescence]\label{prop:coal}
$\ML(U_H(A,B,C))$ is the maximum of $\min(\norm y,\norm z,\norm{Hy})$ over the closed subgroup $\{(y,z)\in\T^2:By+Sz=0\}$. In particular it depends on $A$ and $C$ only through $S$, and if $\gcd(B,S)=1$ it equals $\ML(S,B,HS)$.
\end{proposition}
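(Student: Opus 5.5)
The plan is to push the maximization over $U_H(A,B,C)$ forward to the image of the map $\psi:(y_1,y_2,y_3,u)\mapsto(y_2,\,-(Ay_1+Cy_3)/S)$. The catch is that this second coordinate is not well defined on $\T^4$. So we instead describe the superlevel sets directly.

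Given a point of $U_H$ with $G\ge d$, we look for a point $z\in\T$ with $\norm z\ge d$ and $By_2+Sz=0$. The natural candidate is a weighted average of $y_1$ and $y_3$. Choose representatives $\tilde y_1,\tilde y_3\in[d,1-d]$; this is possible because $\norm{y_i}\ge d$, and $[d,1-d]$ is an interval of representatives mod $1$. Then set
\[
 \tilde z=\frac{A\tilde y_1+C\tilde y_3}{S}.
\]
This is a convex combination, so $\tilde z\in[d,1-d]$ and $\norm{\tilde z}\ge d$. Moreover $B y_2+S\tilde z=Ay_1+By_2+Cy_3\equiv0\pmod 1$. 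Since $u=Hy_2$, the pair $(y_2,\tilde z)$ lies in the closed subgroup $K=\{(y,z):By+Sz=0\}$ and satisfies $\min(\norm{y_2},\norm{\tilde z},\norm{Hy_2})\ge d$. This proves $\ML(U_H)\le\max_K\min(\norm y,\norm z,\norm{Hy})$.

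For the reverse inequality we take $(y,z)\in K$ and set $y_1=y_3=z$, $y_2=y$, $u=Hy$. Then $Ay_1+By_2+Cy_3=Sz+By\equiv0$, so the point lies on $U_H$ and has the same value of $G$. The diagonal choice $y_1=y_3$ is the coalescence, and it realizes every point of $K$. Both inequalities involve only $S$, so the value depends on $A$ and $C$ only through $S$. Note that we never need $U_H$ to be connected or $K$ to be a circle; we only use the defining equations.

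For the last claim, when $\gcd(B,S)=1$, Lemma~\ref{lem:orbit} identifies $K$ with the circle $\{(St,-Bt)\}$. Since $\norm{\cdot}$ is invariant under sign, the function becomes $\min(\norm{St},\norm{Bt},\norm{HSt})$, whose maximum over $t$ is $\ML(S,B,HS)$, with repeated entries allowed if they occur.

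The only delicate step is the representative choice in the first direction. It has to give a genuine point of the torus subgroup $K$ from an average that is taken in $\R$ rather than in $\T$. The integrality check $By_2+S\tilde z\equiv0$ handles this, because $A\tilde y_1+C\tilde y_3\equiv Ay_1+Cy_3$ modulo $\Z$. So this step should go through without difficulty.
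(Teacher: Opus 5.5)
Your proof is correct and matches the paper's argument. You take representatives in $[d,1-d]$, use the convex combination $z=(Ay_1+Cy_3)/S$ for one inequality and the diagonal point $(z,y,z,Hy)$ for the other, and identify the subgroup with the circle $\{(St,-Bt)\}$ via Lemma~\ref{lem:orbit} when $\gcd(B,S)=1$.
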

\begin{proof}
Let $0\le d\le\frac12$. If a point of $U_H$ has $G\ge d$, choose representatives $y_i\in[d,1-d]$ and put $y=y_2$ and $z=(Ay_1+Cy_3)/S$. Then $z\in[d,1-d]$, $By+Sz=Ay_1+By_2+Cy_3\in\Z$ and $\norm{Hy}=\norm u\ge d$. Conversely such $y,z$ give the point $(z,y,z,Hy)$ of $U_H$ with $G\ge d$. If $\gcd(B,S)=1$, the subgroup is the circle $(y,z)=(St,-Bt)$.
\end{proof}

\begin{theorem}[A phase transition]\label{thm:phase}
Let $4\mid N$, $B\ge S$ and $H\equiv\frac N2\pmod N$, and put $D^*=\frac B{2(HS+B)}$. Then
\[
 \ML(U_H(A,B,C))=\frac12-\min\Big(\frac1N,\ D^*\Big).
\]
If $D^*>\frac1N$, the only maximizers are $\pm p$ with $p=(\frac12-\frac1N,\frac12-\frac1N,\frac12-\frac1N,\frac12)$, and the argument of Theorem~\ref{thm:obstr}(ii) applies unchanged. If $D^*<\frac1N$, the only maximizers are $\pm p^*$ with $p^*=(\frac12-D^*,\frac12+\frac SBD^*,\frac12-D^*,\frac12-D^*)$. If moreover $g=\gcd(B,S)$ divides $\frac N2$, then the repeated-speed circle $\{(-\frac Bgt,\frac Sgt,-\frac Bgt,\frac{HS}gt)\}\subseteq U_H$ contains $p^*$, and $\ML(U_H(A,B,C))=\ML(\frac Bg,\frac Sg,\frac{HS}g)$.
\end{theorem}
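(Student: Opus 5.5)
We reduce everything to the planar problem of Proposition~\ref{prop:coal}. There $\ML(U_H(A,B,C))=\max\min(\norm y,\norm z,\norm{Hy})$ over the closed subgroup $\{By+Sz\in\Z\}$. Write $y=\frac12+\eta$ and $z=\frac12+\zeta$, and fix a threshold $d=\frac12-D$ with $D$ small. Then $|\eta|,|\zeta|\le D$, and $By+Sz=\frac N2+B\eta+S\zeta$. Since $4\mid N$, the quantity $B\eta+S\zeta$ must be an integer $j$ with $|j|\le ND$. The third condition becomes $\norm{H(\frac12+\eta)}\ge\frac12-D$. Because $H\equiv\frac N2\pmod N$ and $\frac N2$ is even, $H/2$ is an integer, so this reads $\norm{H\eta}\le D$ after a shift.

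The proof then splits by the level $j$. For $D<\frac2N$ only $j\in\{-1,0,1\}$ occur.
\begin{itemize}
\item At $j=\pm1$ we need $B|\eta|+S|\zeta|\ge1$ with $|\eta|,|\zeta|\le D$. This forces $D\ge\frac1N$, with equality exactly at $\eta=\zeta=\mp\frac1N$. The point $H\eta=\mp\frac HN\equiv\mp\frac12$ is then fine. This gives $p$, with value $\frac12-\frac1N$.
\item At $j=0$ we have $\eta=-\frac SB\zeta$ and $|\eta|\le D$ is automatic since $B\ge S$. We need $\norm{H\eta}\le D$ while keeping $|\zeta|$ as large as allowed. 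The optimum is where the line $H|\eta|$ folds back. Concretely, take $H\frac SB|\zeta|$ near $\frac12$ (mod 1, using the fold at $\frac12$). Set $|\zeta|=D$ and require $\frac12-\frac{HS}BD\le D$. This gives $D\ge\frac B{2(HS+B)}=D^*$, realised at $\zeta=-D^*$, $\eta=\frac SBD^*$. That is exactly $p^*$ after lifting back via $y_1=y_3=z$.
\end{itemize}

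We must check that no smaller $D$ works at level $0$ through other residues. This holds because $\norm{H\eta}\le D$ with $\frac HS B$-scaled $|\eta|\le\frac SBD$ small forces $H|\eta|$ within $D$ of $0$ or of $\frac12$. The first alternative gives $|\eta|\le D/H$, and then the constraint $\norm u\ge d$ fails unless $u$ is shifted. Here I must carefully track the $u$ coordinate: $u=Hy$, so $\norm u=\frac12-\norm{H\eta}$ when $H$ is even, so actually we need $\norm{H\eta}\le D$. This means being near $0$ is allowed and gives nothing binding. The binding constraint is then $|\zeta|\le D$ together with the other coordinates. This bookkeeping of which fold is active is the main obstacle. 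I would verify it by drawing the piecewise-linear superlevel set in the $(\eta,\zeta)$ line $B\eta+S\zeta=0$ as in Lemma~\ref{lem:sumcells}.

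Comparing the two levels gives $\ML=\frac12-\min(\frac1N,D^*)$. The maximizers are unique in each regime, by strictness of the inequalities above. When $D^*>\frac1N$ the maximizers are $\pm p$, and the component count of Theorem~\ref{thm:obstr}(ii) goes through verbatim, since it only used $p$, $\lambda$ and $\gcd(r,N)=1$.

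For the final claim, the given circle satisfies $Ay_1+By_2+Cy_3=(-\frac{AB}g+\frac{BS}g-\frac{CB}g)t=0$ and $u=Hy_2$, so it lies in $U_H$ and is a repeated-speed circle ($y_1=y_3$). To see that it contains $p^*$, solve $\frac Sgt\equiv\frac12+\frac SBD^*$ and $-\frac Bgt\equiv\frac12-D^*$. By Lemma~\ref{lem:orbit}, applied to the coprime pair $(\frac Bg,\frac Sg)$, this reduces to checking $\frac Bg(\frac12+\frac SBD^*)+\frac Sg(\frac12-D^*)\in\Z$, that is $\frac{B+S}{2g}=\frac N{2g}\in\Z$, which is the hypothesis $g\mid\frac N2$. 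The value of $G$ on this circle is at most $\ML(U_H)$ and is attained at $p^*$, so equality holds.
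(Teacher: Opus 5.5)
Your overall route is the paper's. You split by the integer value of $\lambda\cdot z$ (in the coalesced plane of Proposition~\ref{prop:coal} this is the integer $B\eta+S\zeta$), obtain $p$ from the levels $\pm1$ and $p^*$ from level $0$. Your use of Lemma~\ref{lem:orbit} is the same check as the paper's evaluation of the primitive character at $p^*$, which gives $\frac N{2g}$. The lower bounds, the levels $\pm1$ and your final paragraph are fine.

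\textbf{The gap: the upper bound at level $0$.} This is the core of $\ML\le\frac12-\min(\frac1N,D^*)$. Since $\frac H2\in\Z$, you have $u=H(\frac12+\eta)\equiv H\eta$, so $\norm u=\norm{H\eta}$. The identity $\norm u=\frac12-\norm{H\eta}$ that you invoke holds for odd $H$, not here. The condition is therefore $\norm{H\eta}\ge\frac12-D$, and points with $H\eta$ near $0$ are excluded, not ``allowed''. Under your reading, the point $\eta=\zeta=0$ would satisfy every constraint with $D=0$ and give $\ML=\frac12$, although there $u=\frac H2\in\Z$. Your bullet computation $\frac12-\frac{HS}BD\le D$ is the right condition. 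But the paragraph meant to exclude the other residues contradicts it, and it leaves the fold bookkeeping as an unresolved ``main obstacle''.

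\textbf{The fix needs no fold analysis.} You have $\norm{H\eta}\le H|\eta|=\frac{HS}B|\zeta|\le\frac{HS}BD$, and $\frac{HS}BD<\frac12-D$ exactly when $D<D^*$. This single inequality is the paper's argument. It rules out level $0$ for every $D<D^*$, whatever half-integer $H\eta$ might be near. Its equality case at $D=D^*$ forces $|\zeta|=D^*$, which is what actually gives uniqueness at level $0$.

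\textbf{A smaller point: lifting uniqueness back to $U_H$.} Uniqueness in the plane does not transfer to $U_H$ automatically, because the correspondence of Proposition~\ref{prop:coal}, $z=(Ay_1+Cy_3)/S$, is many-to-one. Write $y_i=\frac12+\delta_i$. You need two facts:
\begin{enumerate}[label=\textup{(\arabic*)},leftmargin=*]
\item $|A\delta_1+C\delta_3|=SD$ with $|\delta_1|,|\delta_3|\le D$ and $A,C>0$ forces $\delta_1=\delta_3=\pm D$;
\item $\lambda\cdot\delta=\pm1$ with all $|\delta_i|\le\frac1N$ forces $\delta_1=\delta_2=\delta_3=\pm\frac1N$.
\end{enumerate}
This is how the paper obtains $\pm p^*$ and $\pm p$ as the only maximizers. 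Your phrase ``lifting back via $y_1=y_3=z$'' assumes it without justification.
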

\begin{proof}
At a level $\frac12-D$ write $y_i=\frac12+z_i$ with $|z_i|\le D$. As $\frac N2\in\Z$, $\lambda\cdot z$ is an integer, and $\frac H2\in\Z$ gives $u\equiv Hz_2$. If $D<\frac1N$, then $\lambda\cdot z=0$, so $B|z_2|\le SD$ and $\norm u\le\frac{HSD}B$, which is less than $\frac12-D$ when $D<D^*$. This proves $\ML\le\frac12-\min(\frac1N,D^*)$. The point $p$ lies on $U_H$ with $G(p)=\frac12-\frac1N$, because $\lambda\cdot(\frac12-\frac1N)(1,1,1)=\frac N2-1$ and $H(\frac12-\frac1N)\equiv\frac12$. The point $p^*$ lies on $U_H$ with $\lambda$-value $\frac N2$ and $u\equiv\frac{HSD^*}B=\frac12-D^*$, and $G(p^*)=\frac12-D^*$ since $S\le B$. This proves the formula. If $D^*>\frac1N$, the level $\lambda\cdot z=0$ is excluded at $D=\frac1N$, so a maximizer has $\lambda\cdot z=\pm1$ and is $\mp p$. If $D^*<\frac1N$, a maximizer has $\lambda\cdot z=0$ and equality in $B|z_2|\le SD^*$ and in $|Az_1+Cz_3|\le SD^*$, so it is $\pm p^*$. The displayed circle lies in $U_H$ because $-AB+BS-CB=0$. On the subgroup of Proposition~\ref{prop:coal} the primitive character $\frac Bgy+\frac Sgz$ takes the value $\frac N{2g}$ at the image of $p^*$, so $p^*$ lies on the identity component, which is the image of this circle, when $g\mid\frac N2$. The circle has the speeds $\frac Bg,\frac Sg,\frac{HS}g$, with $\frac Bg$ repeated.
\end{proof}

For $(A,B,C)=(2,55,3)$, with $N=60$, $S=5$ and $g=5$, the transition is at $H=\frac{B(N-2)}{2S}=319$. For $H=30,90,\dots,270$ the value is $\frac{29}{60}$ and no repeated-speed circle in $U_H$ attains it, while for $H=330$ and $390$ the values $\frac{15}{31}=\ML(1,11,330)$ and $\frac{195}{401}=\ML(1,11,390)$ are attained on $\{y_1=y_3\}$. So the obstruction of Theorem~\ref{thm:obstr} is one phase of the family $U_H$, and on both sides of the transition the values are three-runner values, $\ML(1,\frac N2,N-1)$ in the first phase and $\ML(\frac Bg,\frac Sg,\frac{HS}g)$ in the second when $g\mid\frac N2$.

\section{Near-tight tuples for every number of speeds}\label{sec:general}
In this section $n\ge3$, $\T^n=\R^n/\Z^n$ and $G(x)=\min_{i\le n}\norm{x_i}$, and subtori, $\Lambda_S$, $\covol$, coordinate-freeness, $T_v$ and $\ML$ are defined as in Section~2. The inequality \eqref{eq:lip} holds in $\T^n$ with the same proof. We say that LRC$(m)$ holds if every set of $m$ positive integers has loneliness at least $\frac1{m+1}$. Since $\ML(V)\ge\ML(V')$ for $V\subseteq V'$, LRC$(m)$ then gives the same bound for every set of at most $m$ positive integers. LRC$(m)$ is known for $m\le6$ \cite{CusickPomerance,Chen,Bienia,BHK,BarajasSerra}, and the computer assisted proofs \cite{Rosenfeld,Trak,SungTrak,Allikvere} cover $m\le14$. A two-dimensional subtorus $W\subseteq\T^n$ with $\ML(W)=\frac1n$ is \emph{critical}, and a set of $n-1$ positive integers with loneliness $\frac1n$ is a \emph{tight instance} for $n-1$ speeds.

\begin{theorem}\label{thm:general}
Let $n\ge3$, assume LRC$(n-1)$ and LRC$(n-2)$, and put $\kappa_n=(n(n-1))^{n-2}$.
\begin{enumerate}[label=\textup{(\roman*)},leftmargin=*]
\item Every primitive $v\in\Z^n$ with all $v_i\ne0$ satisfies $\ML(T_v)>\frac1n-\frac{\kappa_n}{2\eucl v}$.
\item Every coordinate-free two-dimensional subtorus $W\subseteq\T^n$ has $\ML(W)\ge\frac1n$. If $\ML(W)<\frac1n+\frac1{2n\kappa_n}$, then $W$ is critical and $\covol(W)<\kappa_n$. Every critical subtorus contains the circle of a tight instance for $n-1$ speeds.
\item If $\ML(T_v)<\frac1n$ and $\eucl v\ge n\kappa_n^2$, then $T_v$ lies in a critical subtorus of covolume less than $\kappa_n$.
\end{enumerate}
\end{theorem}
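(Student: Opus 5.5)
The plan is to prove (ii) first, since it feeds both (i) and (iii). Throughout, the two tools are the $n$-dimensional density lemma (Remark~\ref{rem:generaln}) and the strip lemma (Lemma~\ref{lem:strip}).

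\emph{Lower bound in (ii).} Fix a basis $b,c$ of $\Lambda_W$. For $i\ne j$ and $\sigma=\pm1$ put $\lambda=(c_j-\sigma c_i)b-(b_j-\sigma b_i)c\in\Lambda_W$. Then $\lambda_j=\sigma\lambda_i$. If $\lambda\neq0$ and no coordinate of $\lambda$ vanishes, the circle of $\lambda/\gcd$ has at most $n-1$ distinct speeds, so LRC$(n-1)$ gives $\ML(W)\ge\ML(T_\lambda)\ge\frac1n$.

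The case where every such $\lambda$ has a zero coordinate needs care. There I would replace $\lambda$ by $\lambda+N\mu$, with $\mu\in\Lambda_W$ satisfying $\mu_j=\sigma\mu_i$ and $N$ large. When that is impossible, the slice $W\cap\{x_j=\sigma x_i\}$ is forced into a coordinate hyperplane $x_k=0$ for some $k$. Then I would instead use that $W$ is the closure of circles whose speed vectors have a repeated entry $|x_j|=|x_k|$, and argue via the other pair.

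\emph{Three-dimensional subtori.} The same construction, run inside a coordinate-free three-dimensional subtorus $S$, produces a two-dimensional subtorus of $S$ with two coincidences. Hence its coordinates take at most $n-2$ distinct absolute values. Applying LRC$(n-2)$ on a circle in it gives $\ML(S)\ge\frac1{n-1}$.

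\emph{Covolume bound and rigidity in (ii).} Now let $A=\covol(W)$. The density lemma gives a three-dimensional $S\supseteq W$ with
\[
\ML(W)\ge\tfrac1{n-1}-\tfrac12A^{-1/(n-2)}.
\]
If $\ML(W)<\frac1n+\frac1{2n\kappa_n}$, this forces $A^{1/(n-2)}<n(n-1)$ up to the small correction, that is $A<\kappa_n$. This is where $\kappa_n$ comes from, and I expect the small correction to need a careful inequality.

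Next I would show that $\ML(W)$ is a fraction $a/d$ with $d\le 2\kappa_n$.
\begin{itemize}
\item At a maximizer of $G$ on $W$, either some coordinate equals $\frac12$, or two independent coordinate forms are active.
\item Solving the resulting $2\times2$ system expresses $\ML(W)$ with denominator bounded by a $2\times2$ minor of the basis matrix, up to a factor $2$.
\item The minors are bounded by $\covol(W)<\kappa_n$.
\end{itemize}
Then $\ML(W)>\frac1n$ would give $\ML(W)-\frac1n\ge\frac1{nd}\ge\frac1{2n\kappa_n}$, so $W$ is critical. For the tight instance: on a critical $W$, the repeated-speed circle from the first step has at most $n-1$ speeds and loneliness at most $\frac1n$. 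So by LRC$(n-1)$ it is exactly $\frac1n$. If it had fewer than $n-1$ distinct speeds, LRC$(n-2)$ would give more than $\frac1n$, so it has exactly $n-1$ distinct speeds and is a tight instance.

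\emph{Parts (i) and (iii).} Let $V=\eucl v$ and let $W\supseteq T_v$ be the two-dimensional subtorus given by the density lemma, with covolume $A$. By Lemma~\ref{lem:strip}, $\ML(T_v)\ge\ML(W)-A/(2V)$.
\begin{itemize}
\item If $A<\kappa_n$, then $\ML(W)\ge\frac1n$ gives (i) directly.
\item If $A\ge\kappa_n$, then (ii) gives $\ML(W)\ge\frac1n+\frac1{2n\kappa_n}$. I would then compare with the bound from the density lemma itself, $\ML(T_v)\ge\ML(W)-\frac12\cdot(\text{its shift})$, choosing $W$ via Minkowski's second theorem so that both $A$ and the shift are controlled. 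This is the step I expect to be the main obstacle: balancing the two so that the loss is at most $\kappa_n/(2V)$.
\end{itemize}
For (iii): if $\ML(T_v)<\frac1n$ and $W$ is not critical, then
\[
\tfrac1{2n\kappa_n}\le\ML(W)-\ML(T_v)\le\tfrac{A}{2V},
\]
so $V<n\kappa_nA<n\kappa_n^2$ once $A<\kappa_n$ is secured. This contradicts the hypothesis $\eucl v\ge n\kappa_n^2$, so $W$ is critical, with covolume less than $\kappa_n$ by (ii).
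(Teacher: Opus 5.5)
The main gap is in (i), in the case $\covol(W)\ge\kappa_n$ that you flag yourself, and (iii) inherits it. The strip lemma cannot help here. For any two-dimensional $W\supseteq T_v$ write $\Lambda_W=\Z v+\Z\lambda'$. Then $\covol(W)=\eucl v\,\eucl{\pi(\lambda')}\ge\eucl v\,\lambda_1$, because $\pi(\lambda')$ is a nonzero point of $\pi(\Z^n)$. So the strip bound is never better than the density bound $\ML(T_v)\ge\ML(W)-\frac12\lambda_1$, and there is nothing to balance: everything hinges on bounding $\lambda_1$ by order $1/\eucl v$. With only $\lambda_1\le\eucl v^{-1/(n-1)}$, the gap in (ii) settles the case $\covol(W)\ge\kappa_n$ only for $\eucl v\ge(n\kappa_n)^{n-1}$. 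Your (iii) is also circular. Part (ii) bounds $\covol(W)$ only when $\ML(W)<\frac1n+\frac1{2n\kappa_n}$, which is exactly what fails for a non-critical $W$.

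The missing idea is to use the second successive minimum together with the three-dimensional bound $\ML(S)\ge\frac1{n-1}$ that you already have. Assume $\ML(T_v)<\frac1n$, since otherwise (i) is trivial. Take $w_1,w_2$ with $\supn{\pi(w_i)}=\lambda_i$. The density lemma then also gives a coordinate-free three-dimensional $S_3\supseteq T_v$ with $\ML(T_v)\ge\ML(S_3)-\frac12(\lambda_1+\lambda_2)\ge\frac1{n-1}-\lambda_2$. Hence $\lambda_i\ge\lambda_2>\frac1{n(n-1)}$ for all $i\ge2$. Minkowski's second theorem with Vaaler's bound, $\lambda_1\cdots\lambda_{n-1}\le1/\eucl v$, now gives $\lambda_1<\kappa_n/\eucl v$. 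Part (i) follows from $\ML(T_v)\ge\frac1n-\frac12\lambda_1$. Part (iii) follows from $\ML(W)\le\ML(T_v)+\frac12\lambda_1<\frac1n+\frac{\kappa_n}{2\eucl v}\le\frac1n+\frac1{2n\kappa_n}$ and (ii). No strip lemma and no case split on $\covol(W)$ are needed.

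Two steps of (ii) also need repair.

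\emph{Coordinate-free slice.} Your fallback does not work. In rank two, the $\mu\in\Lambda_W$ with $\mu_j=\sigma\mu_i$ lie on the line $\R\lambda$ (unless the relation holds on all of $\Lambda_W$), so $\lambda+N\mu$ has the same zero coordinates as $\lambda$. ``Argue via the other pair'' is not an argument. What works is to look at the rays $\R_{>0}(\pm\ell_k)$ of the coordinate forms in a plane of the dual space. Two consecutive rays $\sigma\ell_a$, $\tau\ell_b$ lie on different lines at an angle less than $\pi$. So $\sigma\ell_a+\tau\ell_b$ is proportional to no $\ell_k$, and its kernel is a coordinate-free slice on which $x_a=-\sigma\tau x_b$. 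Applying this twice inside a three-dimensional $S$, down to a circle rather than a two-dimensional subtorus, gives $\ML(S)\ge\frac1{n-1}$.

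\emph{Denominator bound.} This step is wrong as stated. Two active forms with independent gradients can never give a local maximum of the minimum, and they determine $y$ only in terms of $\mu$. At a maximizer with $\mu<\frac12$ there are two possibilities. Either three constraints are active, giving a $3\times3$ system in $(y,\mu)$ whose determinant is at most $|D_{ij}|+|D_{im}|+|D_{jm}|\le\sqrt3\covol(W)$, with $D_{ij}$ the $2\times2$ minors. Or two active forms have antiparallel gradients, giving a denominator at most $\sqrt2\covol(W)$. Either way $q\le2\covol(W)<2\kappa_n$, and your rigidity argument then goes through.
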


The proof uses three lemmas. The first is a form of \cite[Lemma~3.3]{GiriKravitz}, which states in the notation $D=\frac12-\ML$ that the largest value of $D$ on $k$-dimensional subtori of $\T^n$ equals the largest value on $(k-\ell)$-dimensional subtori of $\T^{n-\ell}$, and its last statement is \cite[Lemma~3.3]{Allikvere}. We include a proof because we use the circle it produces.

\begin{lemma}[Coalescing two coordinates]\label{lem:coalesce}
Let $S\subseteq\T^n$ be a coordinate-free subtorus of dimension $d\ge2$, and let $\ell_i$ be the restriction of the $i$th coordinate to $E=\R\Lambda_S$. There are $a\ne b$ with $\ell_a,\ell_b$ not proportional and $\varepsilon\in\{\pm1\}$ such that $S$ contains a coordinate-free subtorus of dimension $d-1$ on which $x_a=\varepsilon x_b$. Consequently $S$ contains a coordinate-free circle whose speeds take at most $n-d+1$ distinct absolute values, and $\ML(S)\ge\frac1{n-d+2}$ if LRC$(n-d+1)$ holds.
\end{lemma}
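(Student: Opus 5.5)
The plan is to find two coordinates that can be identified up to sign somewhere on $S$ without killing the other coordinates, and then to iterate and apply LRC$(n-d+1)$.

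\emph{Step 1: non-proportional coordinates exist.} The coordinate forms $\ell_1,\dots,\ell_n$ span $E^*$, since the coordinate functions separate points of $\R^n$. As $d\ge2$, they are not all proportional. Coordinate-freeness gives $\ell_i\ne0$ for every $i$. Fix $a\ne b$ with $\ell_a,\ell_b$ linearly independent.

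\emph{Step 2: choose the sign.} For $\varepsilon=\pm1$ consider the rational hyperplane $E_\varepsilon=\ker(\ell_a-\varepsilon\ell_b)\subseteq E$. It has dimension $d-1$ because $\ell_a\ne\varepsilon\ell_b$. Its image $S_\varepsilon=E_\varepsilon/(E_\varepsilon\cap\Z^n)$ is a $(d-1)$-dimensional subtorus of $S$ on which $x_a=\varepsilon x_b$.

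We need some $\varepsilon$ with $S_\varepsilon$ coordinate-free, that is, no $\ell_i$ vanishes on $E_\varepsilon$. Now $\ell_i$ vanishes on $E_\varepsilon$ exactly when $\ell_i$ is a multiple of $\ell_a-\varepsilon\ell_b$. The indices $a$ and $b$ never fail, since $\ell_a,\ell_b$ are independent. The obstacle is an index $i$ with $\ell_i\propto\ell_a-\ell_b$ together with another index $j$ with $\ell_j\propto\ell_a+\ell_b$.

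In that case I would change the pair rather than the sign. Replace $(a,b)$ by $(a,i)$ and consider the hyperplanes $\ker(\ell_a\mp\ell_i)$. A coordinate vanishing there must be proportional to $\ell_a\mp c\,(\ell_a-\ell_b)$. To make the choice systematic, I would pick, among all independent pairs and signs, one that minimizes the number of coordinate forms proportional to $\ell_a-\varepsilon\ell_b$. Alternatively, and more simply, I would use the finiteness of the set of lines $\R\ell_i$: each pair $(a,b,\varepsilon)$ is bad only if $\ell_a-\varepsilon\ell_b$ lies on one of finitely many lines $\R\ell_i$. Restricting to the $2$-dimensional span of $\ell_a,\ell_b$, the planar configuration of lines $\R\ell_k$ in that span must admit some pair of lines with a difference direction $\pm$ avoiding all lines. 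Take two adjacent lines in angular order and normalize the vectors so that they lie in the same half-plane. Their sum then lies strictly between them, so it lies on no line of the configuration. Since the sign can be absorbed into $\varepsilon$, this gives a good choice.

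This planar step is the main obstacle. It needs care, because the lengths of the $\ell_k$ are fixed by the integer lattice and cannot be normalized freely. The fix is that $\ell_a+\ell_b$ still lies in the open cone between $\ell_a$ and $\ell_b$ for any positive lengths, and that cone contains no line $\R\ell_k$ when the two lines are adjacent. Coordinates outside the span are automatically nonzero on $E_\varepsilon$.

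\emph{Step 3: iterate and apply LRC.} Apply the lemma repeatedly $d-1$ times, descending to a coordinate-free circle. Each step adds one relation $x_a=\pm x_b$, and these relations persist on the smaller subtori. Each new identification merges two absolute-value classes: they were distinct classes because the forms $\ell_a,\ell_b$ were non-proportional on the larger subtorus, hence not equal up to sign there. So the number of distinct absolute speeds is at most $n-(d-1)$. The loneliness of the circle equals that of its set of distinct absolute speeds, which has at most $n-d+1$ elements. LRC$(n-d+1)$ together with monotonicity then gives $\ML(S)\ge\ML(\text{circle})\ge\frac1{n-d+2}$.
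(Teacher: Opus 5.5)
Your proposal is correct and is essentially the paper's proof. In the plane spanned by two non-proportional forms you take two angularly adjacent lines $\R\ell_a,\R\ell_b$, note that $\sigma\ell_a+\tau\ell_b$ lies in an open sector meeting no $\R\ell_k$ (and forms outside that plane cannot be proportional to it), absorb the sign into $\varepsilon=-\sigma\tau$, iterate $d-1$ times merging $\pm$-classes, and finish with LRC$(n-d+1)$ and monotonicity.

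The one phrase to tighten is the sign choice. You must choose $\sigma,\tau$ so that $\sigma\ell_a$ and $\tau\ell_b$ bound the sector of angle less than $\pi$ containing no other $\pm\ell_k$ (the paper does this by taking consecutive rays among all $\pm\ell_k$). Merely requiring the two vectors to lie in a common half-plane does not fix the relative sign.
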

\begin{proof}
The forms $\ell_i$ are nonzero, take integer values on $\Lambda_S$, and span the dual space $E^*$, which has dimension $d\ge2$. Let $P\subseteq E^*$ be the plane spanned by two non-proportional forms, and consider the rays $\R_{>0}\sigma\ell_k$ with $\ell_k\in P$ and $\sigma=\pm1$. They lie on at least two lines, and every line meets both open half-planes bounded by any other line. Hence two consecutive rays in the cyclic order around the origin, say $\R_{>0}\sigma\ell_a$ and $\R_{>0}\tau\ell_b$, lie on different lines and make an angle less than $\pi$. The form $\phi=\sigma\ell_a+\tau\ell_b$ lies in the open cone between them, which contains no ray, so $\phi$ is proportional to no $\ell_k$ (such an $\ell_k$ would lie in $P$). Put $\varepsilon=-\sigma\tau$, so that $\phi=\sigma(\ell_a-\varepsilon\ell_b)$, and let $H=\ker\phi$. It is a rational hyperplane of $E$, so the subtorus $S_H$ with $\R\Lambda_{S_H}=H$ has dimension $d-1$, it lies in $S$, and $x_a=\varepsilon x_b$ on $S_H$. A coordinate vanishes on $S_H$ exactly when $\ell_k$ vanishes on $H$, that is, when $\ell_k$ is proportional to $\phi$, so $S_H$ is coordinate-free.

Call two coordinates equivalent on a subtorus if they agree there up to sign. On $S_H$ the classes of $a$ and $b$ merge, so the number of classes drops by at least one. Starting from at most $n$ classes on $S$ and applying the first part $d-1$ times gives a coordinate-free circle $T\subseteq S$ with at most $n-d+1$ classes, that is, whose speeds take at most $n-d+1$ distinct absolute values. Then $\ML(S)\ge\ML(T)\ge\frac1{n-d+2}$ by LRC$(n-d+1)$.
\end{proof}

\begin{lemma}[Density in $\T^n$]\label{lem:densityn}
Let $S$ be a $k$-dimensional subtorus of $\T^n$ with $1\le k<n$, let $E=\R\Lambda_S$, let $\pi:\R^n\to E^\perp$ be the orthogonal projection, and let $\lambda_1\le\dots\le\lambda_{n-k}$ be the successive minima of the lattice $\pi(\Z^n)$ with respect to the maximum norm.
\begin{enumerate}[label=\textup{(\alph*)},leftmargin=*]
\item $\lambda_1\cdots\lambda_{n-k}\le1/\covol(S)$.
\item Let $w_1,\dots,w_j\in\Z^n$ have linearly independent projections $\pi(w_i)$. The subtorus $S'$ with $\R\Lambda_{S'}=E+\R w_1+\dots+\R w_j$ has dimension $k+j$, and every point of $S'$ lies within maximum norm distance $\frac12\sum_i\supn{\pi(w_i)}$ of a point of $S$. In particular $\ML(S)\ge\ML(S')-\frac12\sum_i\supn{\pi(w_i)}$.
\end{enumerate}
\end{lemma}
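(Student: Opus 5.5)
The plan is to reduce (a) to the $4$-dimensional argument of Lemma~\ref{lem:density}(a), and (b) to part (c) of that lemma applied $j$ times.

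\emph{Part (a).} First I will show $\covol(\pi(\Z^n))=1/\covol(S)$ exactly as in Lemma~\ref{lem:density}(a). Extend a basis of the primitive lattice $\Lambda_S$ to a basis of $\Z^n$. The kernel of $\pi$ on $\Z^n$ is $\Lambda_S$, and the unit volume of $\Z^n$ factors as $\covol(\Lambda_S)\cdot\covol(\pi(\Z^n))$.

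Next I will apply Minkowski's second theorem to $\pi(\Z^n)$ and the body $K=\{z\in E^\perp:\supn z\le1\}$:
\[
 \lambda_1\cdots\lambda_{n-k}\operatorname{vol}(K)\le2^{n-k}\covol(\pi(\Z^n)).
\]
By Vaaler's theorem $K$ is a central section of $[-1,1]^n$ of dimension $n-k$, so $\operatorname{vol}(K)\ge2^{n-k}$. Combining the two gives $\lambda_1\cdots\lambda_{n-k}\le1/\covol(S)$. The only point to check is that Minkowski's second theorem holds for compact symmetric convex bodies with respect to a lattice in a subspace; this is standard (Cassels).

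\emph{Part (b).} The dimension count is immediate. The vectors $\pi(w_i)$ are independent, so $w_1,\dots,w_j$ are independent modulo $E$, and $E+\sum\R w_i$ has dimension $k+j$. It is rational, so it defines a subtorus $S'$ whose lattice contains $\Lambda_S$ and the $w_i$.

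For the distance bound I will avoid passing through intermediate subtori and round all coefficients at once, as in Lemma~\ref{lem:density}(c). Write $w_i=u_i+e_i$ with $u_i=\pi(w_i)$ and $e_i\in E$. Every point of $E+\sum\R w_i=E\oplus\sum\R u_i$ has the form $y=s+\sum b_iu_i$ with $s\in E$. Choose integers $j_i$ with $|b_i-j_i|\le\frac12$. Then
\[
 y-\sum j_iw_i=\Bigl(s-\sum j_ie_i\Bigr)+\sum(b_i-j_i)u_i.
\]
This point is congruent to $y$ modulo $\Z^n$. Its first summand represents a point of $S$, and the second has maximum norm at most $\frac12\sum\supn{u_i}$ by the triangle inequality. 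The inequality for $\ML$ then follows from \eqref{eq:lip} in $\T^n$, applied at a maximizer of $G$ on $S'$.

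I expect no real obstacle. The most delicate step is the Vaaler–Minkowski volume bookkeeping in (a), because it is stated for successive minima rather than for a single vector.
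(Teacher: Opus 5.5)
Your proposal is correct and is essentially the paper's proof. For (a) you use the identity $\covol(\pi(\Z^n))=1/\covol(S)$, Vaaler's bound $\operatorname{vol}(K)\ge 2^{n-k}$ and Minkowski's second theorem; Vaaler supplies the volume bound, while $K$ is a central section by definition. For (b) you round all coefficients $b_i$ at once rather than iterating Lemma~\ref{lem:density}(c), which is exactly what the paper does.
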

\begin{proof}
As in Lemma~\ref{lem:density}(a), $\pi(\Z^n)$ is a lattice of rank $n-k$ in $E^\perp$ with covolume $1/\covol(S)$. The body $K=\{z\in E^\perp:\supn z\le1\}$ has volume at least $2^{n-k}$ by Vaaler's theorem \cite{Vaaler}, and Minkowski's second theorem \cite[Chapter~VIII]{Cassels} gives $\lambda_1\cdots\lambda_{n-k}\operatorname{vol}(K)\le2^{n-k}\covol(\pi(\Z^n))$, which is (a). For (b), $E+\R w_1+\dots+\R w_j=E+\R\pi(w_1)+\dots+\R\pi(w_j)$ has dimension $k+j$. Write $w_i=\pi(w_i)+e_i$ with $e_i\in E$, represent a point of $S'$ by $y=s+\sum_ib_i\pi(w_i)$ with $s\in E$, and let $c_i$ be an integer nearest to $b_i$. Then $y-\sum_ic_iw_i=(s-\sum_ic_ie_i)+\sum_i(b_i-c_i)\pi(w_i)$ represents the same point, its first summand represents a point of $S$, and its second summand has maximum norm at most $\frac12\sum_i\supn{\pi(w_i)}$. The last claim follows from \eqref{eq:lip}.
\end{proof}

\begin{lemma}[Corner lemma for planes]\label{lem:corner2}
Let $W\subseteq\T^n$ be a coordinate-free two-dimensional subtorus. Then $\ML(W)=p/q$ with integers $p\ge1$ and $1\le q\le2\covol(W)$.
\end{lemma}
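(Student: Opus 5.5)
The plan is to imitate the corner lemma (Lemma~\ref{lem:corner}) one dimension up. Parametrize $W$ by $\T^2$ through a basis $b_1,b_2$ of $\Lambda_W$. Then the $i$th coordinate becomes the linear form $\ell_i(s,t)=\alpha_i s+\beta_i t$ with $(\alpha_i,\beta_i)=((b_1)_i,(b_2)_i)\in\Z^2\setminus\{0\}$, since $W$ is coordinate-free. The function $G$ on $\R^2$ is a minimum of the piecewise linear functions $\norm{\ell_i}$. On each cell where the nearest integers $k_i$ to all $\ell_i$ are fixed, it is the minimum of affine functions $\pm(\ell_i-k_i)$. A maximizer $x^*$ of $G$ with value $c=\ML(W)>0$ can therefore be taken to be a vertex of the arrangement. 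I would argue this by the usual linear programming step. Near $x^*$, the set $\{G\ge c\}$ is locally cut out by the active constraints $\sigma_i(\ell_i-k_i)\ge c$ (or the cusp constraints $\ell_i\in\frac12+\Z$). If the active gradients did not span $\R^2$ together with the objective, we could move along a line on which $c$ increases or stays constant until a new constraint becomes active. So I may assume one of two configurations at $x^*$.

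\textbf{Case (i).} Three active forms satisfy $\sigma_i\ell_i(x^*)=k_i+c$ for $i\in I$, with $|I|=3$. Two of them are linearly independent, and the third is not a positive combination blocking ascent. Then $(x^*,c)$ solves a $3\times3$ integer linear system in the unknowns $(s,t,c)$, with rows $(\sigma_i\alpha_i,\sigma_i\beta_i,-1)$.

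\textbf{Case (ii).} Some active form sits at a cusp value $\frac12$, which gives $c=\frac12$ directly.

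In Case (i), Cramer's rule gives $c=p/q$ with $q$ equal to the absolute value of the determinant. Expanding along the last column, this determinant is $\pm\sigma_1\sigma_2\det(\ell_1,\ell_2)\pm\dots$, a signed sum of three $2\times2$ minors $\det(\ell_i,\ell_j)$. Each minor is a Plücker coordinate of $\Lambda_W$, namely $p_{ij}=(b_1)_i(b_2)_j-(b_1)_j(b_2)_i$. The optimality condition (the LP dual) says that $0$ lies in the convex hull of the three signed gradients $\sigma_i(\alpha_i,\beta_i)$. This means the three terms $\sigma_j\sigma_k\det(\ell_j,\ell_k)$ all have the same sign (they are proportional to the barycentric weights). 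Hence $q=|p_{12}|+|p_{13}|+|p_{23}|$ with no cancellation. Cauchy--Schwarz then gives $q\le\sqrt3\,(p_{12}^2+p_{13}^2+p_{23}^2)^{1/2}\le\sqrt3\,\covol(W)$, because $\covol(W)^2=\sum_{i<j}p_{ij}^2$. This is at most $2\covol(W)$. In Case (ii), $q=2\le2\covol(W)$, since $\covol(W)\ge1$ for a nonzero integer lattice. Finally $p\ge1$ because $c>0$, and $c>0$ holds because a coordinate-free $W$ contains a coordinate-free circle of positive loneliness.

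\textbf{Main obstacle.} The hard step is the vertex reduction: justifying that a maximizer can be taken at a point where either a cusp is active or three forms are active with $0$ in the convex hull of their signed gradients. One has to handle degenerate situations such as parallel active forms and only two active forms. My approach is a local argument. At a maximizer with fewer than three independent active constraints, the active signed gradients have $0$ in their convex hull (Carathéodory in $\R^2$). Two opposite parallel gradients give a line of maximizers, and I would slide along it until a third constraint or a cusp becomes active, which terminates because $W$ is compact. In the two-gradient opposite case, $\sigma_1\ell_1=-\mu\sigma_2\ell_2$ with $\mu>0$. This yields $c(1+\mu)=$ an integer combination, and the denominator is controlled by a single minor or by $|\alpha|+|\beta|$-type quantities. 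That case may be handled directly instead of sliding if sliding proves awkward.
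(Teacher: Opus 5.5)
Your proposal is correct and is essentially the paper's proof. Both use a local linear program at a maximizer, apply Cramer's rule at three active constraints with independent rows $(\sigma_ir_i,-1)$, expand along the last column and apply Cauchy--Schwarz to get $q\le\sqrt3\,\covol(W)$, and use $\covol(W)\ge1$ for the case $c=\frac12$.

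The only variation is the degenerate case where all active forms are parallel. The paper reduces it to a one-variable problem with denominator $|\sigma_ic_i-\sigma_jc_j|\le|D_{im}|+|D_{jm}|$, while you slide along the line of maximizers until a non-parallel form becomes active. Your sliding works, but it stops because the coordinate forms span $\R^2$, so some form is non-parallel to the active ones and varies along the line; compactness of $W$ is not the reason. Your no-cancellation claim is also unnecessary, since the triangle inequality already bounds the determinant by the sum of the three minors.
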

\begin{proof}
Let $b,b'$ be a basis of $\Lambda_W$ and $r_i=(b_i,b'_i)\in\Z^2$. Then $W$ is the image of $\R^2$ under $y\mapsto(r_i\cdot y)_i$, the $r_i$ span $\R^2$, and the minors $D_{ij}=\det(r_i,r_j)$ satisfy $\sum_{i<j}D_{ij}^2=\covol(W)^2$ by the Cauchy--Binet formula, so $\covol(W)\ge1$. Let $f(y)=\min_i\norm{r_i\cdot y}$, let $y^*$ be a maximizer and $\mu=f(y^*)$, which is positive because $W$ contains a coordinate-free circle. If $\mu=\frac12$ we are done. Otherwise let $I$ be the set of $i$ with $\norm{r_i\cdot y^*}=\mu$. For $i\in I$ there are $\sigma_i\in\{\pm1\}$ and $k_i\in\Z$ with $\norm{r_i\cdot y}=\sigma_i(r_i\cdot y-k_i)$ near $y^*$, and $f(y)=\min_{i\in I}\sigma_i(r_i\cdot y-k_i)$ near $y^*$. Hence $(y^*,\mu)$ is a local, and therefore global, maximum point of the linear function $\nu$ on the convex polyhedron $P=\{(y,\nu):\sigma_i(r_i\cdot y-k_i)\ge\nu,\ i\in I\}$. A line contained in $P$ has a direction $(\delta,\eta)$ with $\sigma_ir_i\cdot\delta=\eta$ for $i\in I$, and $\eta=0$ because $\nu$ is bounded on $P$.

If the $r_i$ with $i\in I$ span $\R^2$, then $P$ contains no line, so $\nu$ attains $\mu$ at a vertex of $P$, where three constraints $i,j,m$ with linearly independent rows $(\sigma_ir_i,-1)$ are active. By Cramer's rule $\mu\in\frac1\Delta\Z$, where $\Delta$ is the absolute value of the determinant of these rows, and expanding along the last column gives $\Delta\le|D_{ij}|+|D_{im}|+|D_{jm}|\le\sqrt3\covol(W)$. Otherwise $r_i=c_iw$ for $i\in I$, with a primitive $w\in\Z^2$ and integers $c_i\ne0$. Then the constraints depend on $y$ only through $z=w\cdot y$, and the maximum of $\nu$ on the polygon $\{(z,\nu):\sigma_i(c_iz-k_i)\ge\nu,\ i\in I\}$, which contains no line, is attained at a vertex, where two constraints $i,j$ with $\sigma_ic_i\ne\sigma_jc_j$ are active. So $\mu\in\frac1{|\sigma_ic_i-\sigma_jc_j|}\Z$. Some $r_m$ is not parallel to $w$, and $|c_i|\le|c_i\det(w,r_m)|=|D_{im}|$ and likewise $|c_j|\le|D_{jm}|$, so $|\sigma_ic_i-\sigma_jc_j|\le\sqrt2\covol(W)$.
\end{proof}

\begin{proof}[Proof of Theorem~\ref{thm:general}]
By Lemma~\ref{lem:coalesce}, coordinate-free subtori of dimension two have loneliness at least $\frac1n$, and those of dimension three have loneliness at least $\frac1{n-1}$.

(ii) Let $W$ be coordinate-free of dimension two with $\ML(W)<\frac1n+\frac1{2n\kappa_n}$, and let $A=\covol(W)$. By Lemma~\ref{lem:densityn}(a) with $k=2$ there is $w\in\Z^n$ with $0<\supn{\pi(w)}=\lambda_1\le A^{-1/(n-2)}$, and Lemma~\ref{lem:densityn}(b) gives a coordinate-free three-dimensional subtorus $S'\supseteq W$ with $\frac1{n-1}\le\ML(S')\le\ML(W)+\frac12\lambda_1$. Hence
\[
 A^{-1/(n-2)}\ \ge\ \lambda_1\ \ge\ \frac2{n-1}-2\ML(W)\ >\ \frac2{n(n-1)}-\frac1{n\kappa_n}\ \ge\ \frac1{n(n-1)},
\]
since $\kappa_n\ge n-1$, so $A<\kappa_n$. By Lemma~\ref{lem:corner2}, $\ML(W)=p/q$ with $q\le2A<2\kappa_n$. If $\ML(W)\ne\frac1n$, then $\ML(W)-\frac1n=\frac{np-q}{nq}\ge\frac1{nq}>\frac1{2n\kappa_n}$, a contradiction. If $W$ is critical, Lemma~\ref{lem:coalesce} gives a coordinate-free circle $T\subseteq W$ whose speeds take at most $n-1$ distinct absolute values, and $\ML(T)\le\ML(W)=\frac1n$. By LRC$(n-2)$ there are exactly $n-1$ values, and by LRC$(n-1)$ we get $\ML(T)=\frac1n$.

(i) Suppose that $\ML(T_v)<\frac1n$. Apply Lemma~\ref{lem:densityn} to $S=T_v$, for which $k=1$ and $\covol(S)=\eucl v$, and choose $w_1,w_2\in\Z^n$ with linearly independent projections and $\supn{\pi(w_i)}=\lambda_i$. Part (b) gives coordinate-free subtori $W\supseteq T_v$ of dimension two and $S_3\supseteq T_v$ of dimension three with
\begin{gather*}
 \ML(T_v)\ge\ML(W)-\tfrac12\lambda_1\ge\tfrac1n-\tfrac12\lambda_1,\\ \ML(T_v)\ge\ML(S_3)-\tfrac12(\lambda_1+\lambda_2)\ge\tfrac1{n-1}-\lambda_2.
\end{gather*}
The second inequality gives $\lambda_i\ge\lambda_2>\frac1{n-1}-\frac1n=\frac1{n(n-1)}$ for $i\ge2$, so part (a) gives $\lambda_1<\kappa_n/\eucl v$, and the first inequality gives (i).

(iii) For $W$ as in the proof of (i), $\ML(W)\le\ML(T_v)+\frac12\lambda_1<\frac1n+\frac{\kappa_n}{2\eucl v}\le\frac1n+\frac1{2n\kappa_n}$, and (ii) applies.
\end{proof}

The hypotheses of Theorem~\ref{thm:general} hold for $3\le n\le7$, and for $n\le15$ by the computer assisted proofs cited above. They are the hypotheses of Cordella's reduction lemma \cite[Lemma~2.4]{Cordella}, and part (iii) is its effective form: a near-tight $n$-tuple on no critical subtorus has $\eucl v<n\kappa_n^2$. This answers \cite[Question~6.7]{Cordella} for every $n$. The step of \cite{Cordella} that is not effective is a gap above $\frac1n$ in the values of two-dimensional subtori, which comes from the fact that, in the notation $D=\frac12-\ML$, these values have only upper accumulation points \cite[Theorem~1.4]{GiriKravitz}. Part (ii) makes the gap explicit and bounds the covolume of the critical subtori. Its last statement explains why tight instances for $n-1$ speeds govern the near-tight $n$-tuples. For $n=4$ the circle of $\{1,2,3\}$ lies on $\Uone$ (at $a=b$) and on $\Utwo$ (at $x=y$), and for $n=5$ Cordella combines his reduction with Chen's two tight quadruples $\{1,2,3,4\}$ and $\{1,3,4,7\}$. In contrast with Section~\ref{sec:obstr}, at the lowest value $\frac1n$ a circle with a repeated speed always attains $\ML(W)$.

Part (i) says that the values of $\ML$ below $\frac1n$ approach $\frac1n$ at rate $1/\eucl v$. For $\eta>0$ the values at most $\frac1n-\eta$ come from tuples with $\eucl v<\kappa_n/(2\eta)$, so they form an explicit finite set, and the exponent of $\eucl v$ is best possible because $\ML(1,2,3,4k)=\frac14-\frac1{16k+4}$ by Theorem~\ref{thm:main}. Giri and Kravitz \cite[Section~7]{GiriKravitz} showed effectively that a circle of length $\eucl v$ is $\varepsilon$-dense in a two-dimensional subtorus once $\eucl v\ge c_n\varepsilon^{1-n}$ with an explicit $c_n$, which gives a deficit of order $\eucl v^{-1/(n-1)}$. The exponent $1$ comes from the second minimum $\lambda_2$, which LRC$(n-2)$ controls. For the conjecture itself, (i) reduces the case of $n$ speeds to $\eucl v<n(n+1)\kappa_n/2$. This does not improve the bound $\binom{n+1}2^{n-1}$ of Malikiosis, Santos and Schymura \cite{MSS}, and Allikvere \cite[Theorem~3.8]{Allikvere} bounds counterexamples much more sharply by applying LRC$(m)$ for all $m<n$ along a whole flag of subtori containing $T_v$. The content of (i) is the range between $\frac1{n+1}$ and $\frac1n$, where counterexample bounds say nothing. The constants are far from what an exhaustive search can handle: $4\kappa_4^2=82{,}944$ against the bound $512$ of Proposition~\ref{prop:large}, and $5\kappa_5^2=3.2\cdot10^8$, $6\kappa_6^2\approx3.9\cdot10^{12}$. So for five and six speeds the theorems of \cite{Cordella} now hold for all near-tight tuples outside an explicit finite set, and checking that set needs sharper constants, as Propositions~\ref{prop:list} and~\ref{prop:large} provide for $n=4$.

\subsection{Five speeds}\label{sec:five}
For five speeds the general constants can be replaced by an exact statement up to $\frac29$, because Theorem~\ref{thm:main} identifies the circles with a repeated speed that a subtorus of small loneliness must contain. For a quadruple $u$ write $T_u\times\T=\{(u_1t,u_2t,u_3t,u_4t,s)\}\subseteq\T^5$. Its loneliness is $\ML(u)$, since the last coordinate can be put at $\frac12$.

\begin{theorem}\label{thm:five}
Let $W\subseteq\T^5$ be a coordinate-free two-dimensional subtorus with $\ML(W)<\frac29$. Then, up to signed coordinate permutations, $W=T_{(1,2,3,4)}\times\T$ or $W=T_{(1,3,4,7)}\times\T$, and $\ML(W)=\frac15$.
\end{theorem}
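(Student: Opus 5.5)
The approach is to use Lemma~\ref{lem:coalesce} inside $W$ to produce a coordinate-free circle with a repeated speed, and then let Theorem~\ref{thm:main} identify that circle. Since $W$ is coordinate-free of dimension two, the first part of Lemma~\ref{lem:coalesce} (with $d=2$) gives indices $a\ne b$ with $\ell_a,\ell_b$ non-proportional, a sign $\varepsilon$, and a coordinate-free circle $T\subseteq W$ on which $x_a=\varepsilon x_b$. The speeds of $T$ therefore take at most four distinct absolute values. We have $\ML(T)\le\ML(W)<\frac29$. By LRC$(3)$ there are exactly four distinct absolute values, since three or fewer would give $\ML(T)\ge\frac14$. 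So $\ML(T)=\ML(u)$ for a primitive quadruple $u$ of distinct positive integers with $\ML(u)<\frac29$. By Theorem~\ref{thm:values}, and Corollary~\ref{cor:consequences}(ii)--(iii), the only such values are $\frac15$, and $u$ is $\{1,2,3,4\}$ or $\{1,3,4,7\}$. Hence $T$ is, up to signed permutation of its five coordinates, the circle of $(u_1,u_2,u_3,u_4,u_j)$ for one $j$, and $\ML(W)\ge\frac15$.

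Next I would pin down $W$ from $T$. After a symmetry, let $T$ have speed vector $v=(u_1,u_2,u_3,u_4,u_j)$, so $x_5=x_j$ on $T$; here $a,b$ are the coordinates $5$ and $j$. Since $\ell_5$ and $\ell_j$ are not proportional on $W$, the form $x_5-x_j$ is nonzero on $W$. I would then pick a second lattice vector $w\in\Lambda_W$ with $\Lambda_W=\Z v+\Z w$ and aim to show that $w$ can be taken to be $e_5$. This would give $W=T_u\times\T$ directly.

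The main obstacle is excluding every other $w$. I would argue through the strip bound, Lemma~\ref{lem:strip}, which transfers to $\T^5$ with the same proof. Every point of $W$ is within distance $\covol(W)/(2\eucl v)$ of $T$, but that bound runs the wrong way for this purpose.

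A better route is to repeat the coalescing with different pairs. Lemma~\ref{lem:coalesce} actually allows any pair of consecutive rays in the arrangement of the five forms $\pm\ell_i$ in the plane $E^*$, and every such pair yields a repeated-speed circle in $W$ that must again be one of the two circles above. Suppose all five $\ell_i$ are pairwise non-proportional. Then there are five lines, hence ten rays and ten consecutive pairs. This produces many circles $T$, each with one coordinate doubled, and their speed patterns must all match $\{1,2,3,4\}$ or $\{1,3,4,7\}$ with a duplicate. I expect this to force a contradiction by a finite case check on the angular order of the forms.

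So some two forms are proportional. Their ratio must then be a ratio occurring in the quadruple. I would then check the possible pairs directly, and expect to reduce to the case where four forms span a line and one form, say $\ell_5$, is independent, which is exactly $T_u\times\T$. Failing a clean argument, I would instead bound $\covol(W)$ using Lemma~\ref{lem:densityn} and Lemma~\ref{lem:corner2}. Since $\ML(W)<\frac29<\frac14=\frac1{n-1}$, the three-dimensional step gives $\lambda_1$ bounded below, so $\covol(W)$ is bounded explicitly. The remaining finitely many subtori would then be enumerated, as in Proposition~\ref{prop:list}. Finally, $\ML(W)=\ML(u)=\frac15$ because the last coordinate can be set to $\frac12$.
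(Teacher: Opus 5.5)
You find the repeated-speed circle $T$ correctly, and this part matches the paper. Coalescing gives a coordinate-free circle $T\subseteq W$ with exactly four distinct absolute speeds, and Corollary~\ref{cor:consequences} identifies it. So after a signed permutation $T$ is the circle of one of eight explicit vectors $q$, such as $(1,1,2,3,4)$ or $(1,3,4,7,7)$, and $\ML(W)\ge\frac15$.

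The gap is everything after that, which is the actual content of the theorem: you never show that $W$ is a product. The angular-order argument is only an expectation (``I expect this to force a contradiction by a finite case check''). Nothing you say bounds the integers involved, so it is not yet a finite check. It would also have to handle the circles on which $x_a=\pm x_b$ that fail to be coordinate-free, since those give no constraint. The same applies to ``check the possible pairs directly, and expect to reduce to\dots''.

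Your fallback is the right idea and is what the paper does, but as written it is not a proof. Four things are missing.
\begin{enumerate}
\item \emph{A covolume bound.} Your $\lambda_1>\frac1{18}$ does give one, via $\lambda_1^3\le\lambda_1\lambda_2\lambda_3\le1/\covol(W)$, so $\covol(W)<5832$. The paper also uses the coordinate-free four- and five-dimensional subtori containing $W$, with loneliness at least $\frac13$ and $\frac12$. This gives $\lambda_2>\frac19$, $\lambda_3>\frac5{27}$ and $\covol(W)^2\le765{,}275$.
\item \emph{A search space built from $q\in\Lambda_W$.} Write $\Lambda_W=\Z q+\Z c$ and normalize $c_1=0$, first nonzero entry positive, $\gcd(c)=1$. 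Then one enumerates $c$ in a four-dimensional ellipsoid. Enumerating all rank-two sublattices of $\Z^5$ ``as in Proposition~\ref{prop:list}'' is hopeless, since their number grows like the fifth power of the covolume.
\item \emph{A cheap exact test instead of computing $\ML$.} Use the necessary condition you yourself identified. With rows $r_k=(q_k,c_k)$, every coordinate-free circle of $W$ in a direction orthogonal to some $r_i\mp r_j$ must have the speed pattern of one of the eight vectors.
\item \emph{Actually running it.} The paper's \texttt{g5enum2} visits about $6.6\cdot10^{10}$ vectors and finds exactly $16$ survivors. All of them are products $T_{(1,2,3,4)}\times\T$ or $T_{(1,3,4,7)}\times\T$. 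With your weaker bound the search would be about $(20/3)^4\approx2000$ times larger.
\end{enumerate}
Without this computation, or a genuine argument replacing it, the identification of $W$ from $T$ is missing.
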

\begin{proof}
By Lemma~\ref{lem:coalesce}, $W$ contains a coordinate-free circle $T$ whose speeds take at most four distinct absolute values, and $\ML(T)\le\ML(W)<\frac29$. At most three values would give $\ML(T)\ge\frac14$ by the conjecture for three speeds, so there are exactly four, and by Corollary~\ref{cor:consequences}(iii) they are a multiple of $\{1,2,3,4\}$ or of $\{1,3,4,7\}$. The same holds for every coordinate-free circle in $W$ with a repeated absolute speed. After a signed coordinate permutation, $T$ is the circle of one of the eight vectors
\begin{gather*}
 (1,1,2,3,4),\quad(1,2,2,3,4),\quad(1,2,3,3,4),\quad(1,2,3,4,4),\\
 (1,1,3,4,7),\quad(1,3,3,4,7),\quad(1,3,4,4,7),\quad(1,3,4,7,7),
\end{gather*}
call it $q$.
Since $q$ is primitive and lies in the primitive lattice $\Lambda_W$, we have $\Lambda_W=\Z q+\Z c$ for some $c\in\Z^5$. Replacing $c$ by $\pm c+mq$ we may assume $c_1=0$, as $q_1=1$, and that the first nonzero entry of $c$ is positive. The minors $c_k=q_1c_k-q_kc_1$ show that $\Z q+\Z c$ is primitive exactly when $\gcd(c)=1$.

Next we bound $\covol(W)$. Apply Lemma~\ref{lem:densityn} to $S=W$, with $k=2$ and successive minima $\lambda_1\le\lambda_2\le\lambda_3$. Part (b) gives coordinate-free subtori containing $W$ of dimensions three, four and five, with loneliness at least $\frac14$, $\frac13$ and $\frac12$ by Lemma~\ref{lem:coalesce} and the conjecture for three and two speeds. Hence $\lambda_1>2(\frac14-\frac29)=\frac1{18}$, $\lambda_1+\lambda_2>2(\frac13-\frac29)=\frac29$ and $\lambda_1+\lambda_2+\lambda_3>2(\frac12-\frac29)=\frac59$. So $\lambda_2>\frac19$ and $\lambda_3>\frac5{27}$, and part (a) gives $\covol(W)<\frac{4374}5$, that is
\[
 \covol(W)^2=\sum_{i<j}(q_ic_j-q_jc_i)^2\le 765{,}275 .
\]
A circle of $W$ with a repeated absolute speed is the image of a line in the direction orthogonal to $r_i-\varepsilon r_j$ for some $i<j$ and $\varepsilon=\pm1$, where $r_k=(q_k,c_k)$ are the coordinate rows of the parametrization $(t,s)\mapsto tq+sc$. For each of the eight vectors $q$, the program \texttt{g5enum2} (Section~\ref{sec:comp}) enumerates every normalized $c$ in this range and tests all of these directions. It finds that $W=\Z q+\Z c$ satisfies the condition of the first paragraph only for $16$ pairs $(q,c)$, two for each $q$, and in each of them one of the two coordinates where $q$ has a repeated entry is a free coordinate of $W$ while the other four coordinates are the circle of $\{1,2,3,4\}$ or $\{1,3,4,7\}$. So $W$ is one of the two products, and both have loneliness $\frac15$.
\end{proof}

\begin{corollary}\label{cor:five}
\begin{enumerate}[label=\textup{(\roman*)},leftmargin=*]
\item The accumulation points of the set of values $\ML(V)$, over all sets $V$ of five positive integers, that lie in $(0,\frac29)$ consist of $\frac15$ alone. In the notation $D=\frac12-\ML$ this is the case $n=5$ of \cite[Conjecture~1.5]{GiriKravitz} on $(\frac5{18},\frac12)$.
\item Every primitive five-speed tuple $v$ with $\ML(v)<\frac15$ and $\eucl v\ge33{,}750$ is, up to order, $\{a,2a,3a,4a,b\}$ or $\{a,3a,4a,7a,b\}$ with $\gcd(a,b)=1$. In particular the critical subtori for five speeds are the two products of Theorem~\ref{thm:five}.
\end{enumerate}
\end{corollary}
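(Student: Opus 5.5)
For (i) I would follow the proof of Corollary~\ref{cor:acc}. By \cite[Theorem~1.4]{GiriKravitz}, every accumulation point of the five-speed spectrum is the value of a subtorus of $\T^5$ of dimension at least two. Values in $(0,\frac29)$ are positive, so the subtorus is coordinate-free. A coordinate-free subtorus of dimension at least three has loneliness at least $\frac14>\frac29$ by Lemma~\ref{lem:coalesce} together with LRC$(3)$. So every accumulation point in $(0,\frac29)$ is the value of a coordinate-free two-dimensional subtorus with $\ML<\frac29$, and Theorem~\ref{thm:five} makes this value $\frac15$. In the other direction, $\frac15=\ML(1,2,3,4)$ is a lower accumulation point, since $\ML(1,2,3,4,b)\to\frac15$ along suitable large $b$ by \cite[Theorem~1.3]{GiriKravitz}. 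Repeated speeds cause no trouble: a tuple with a repeated speed has the value of at most four distinct speeds, and by Theorem~\ref{thm:values} those values below $\frac29$ reduce to the isolated value $\frac15$ (together with values $\ge\frac14$ for fewer speeds). Finally, translating $\ML<\frac29$ into $D=\frac12-\ML>\frac5{18}$ gives the stated interval.

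For (ii) I would rerun the proof of Theorem~\ref{thm:general}(i),(iii) for $n=5$, replacing the generic constants with the five-speed ones. Let $\lambda_1\le\lambda_2\le\lambda_3\le\lambda_4$ be the successive minima of $\pi(\Z^5)$ for $S=T_v$. Lemma~\ref{lem:densityn}(b) produces coordinate-free subtori containing $T_v$ of dimensions $2,3,4,5$, whose loneliness is at least $\frac15,\frac14,\frac13,\frac12$ respectively, by Lemma~\ref{lem:coalesce} and LRC for smaller numbers of speeds. From $\ML(v)<\frac15$ we then get
\[
 \lambda_1+\lambda_2>\tfrac1{10},\qquad \lambda_1+\lambda_2+\lambda_3>\tfrac4{15},\qquad \lambda_1+\dots+\lambda_4>\tfrac35 .
\]
These give lower bounds on $\lambda_2,\lambda_3,\lambda_4$, roughly $\frac1{20}$, $\frac4{45}$ and $\frac3{20}$; how to split the sums optimally is a detail I would settle when computing. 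Lemma~\ref{lem:densityn}(a) then yields $\lambda_1<C/\eucl v$ with an explicit $C$.

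The two-dimensional subtorus $W\supseteq T_v$ built from $w_1$ satisfies $\ML(W)<\frac15+\frac12\lambda_1<\frac15+\frac{C}{2\eucl v}$. As soon as this is $\le\frac29$, which is $\eucl v\ge\frac{45C}{2}$, Theorem~\ref{thm:five} identifies $W$ as $T_{(1,2,3,4)}\times\T$ or $T_{(1,3,4,7)}\times\T$ up to symmetry. I expect the optimized constant to reproduce the stated threshold $33{,}750$; checking that it really does is the main obstacle, and if the naive splitting falls short I would use the sharper lower bounds on the minima. Once $W$ is identified, $v\in\Lambda_W=\Z(u,0)+\Z e_5$ with $u=(1,2,3,4)$ or $(1,3,4,7)$, so after signs and reordering $v=(au,b)$. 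Since $v$ is primitive, $\gcd(a,b)=1$. This gives the two stated families.

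The claim about critical subtori then follows: any near-tight tuple long enough lies on one of these two products, and Theorem~\ref{thm:five} shows that these products are exactly the coordinate-free two-dimensional subtori with loneliness $\frac15$.
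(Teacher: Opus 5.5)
Your proposal is correct and follows the paper's proof essentially step for step: for (i) you reduce via \cite[Theorem~1.4]{GiriKravitz} to coordinate-free two-dimensional subtori, apply Theorem~\ref{thm:five}, and reconcile repeated speeds; for (ii) you run the successive-minima argument of Theorem~\ref{thm:general}(i) with the five-speed constants and then read off the two forms from $v\in\Lambda_W$. Your hedge about the constant is unnecessary, since monotonicity of the minima gives exactly $\lambda_2>\frac1{20}$, $\lambda_3>\frac4{45}$, $\lambda_4>\frac3{20}$, hence $\lambda_2\lambda_3\lambda_4>\frac1{1500}$, $C=1500$ and $\frac{45C}{2}=33{,}750$, which is precisely the paper's threshold.
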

\begin{proof}
(i) By \cite[Theorem~1.4]{GiriKravitz}, $\mathcal S(4)\subseteq\operatorname{acc}(\mathcal S(5))\subseteq\mathcal S_2(5)$. The two conventions are reconciled as in the proof of Corollary~\ref{cor:acc}: a tuple with a repeated speed has the value of a set of at most four speeds, and by Corollary~\ref{cor:consequences} the only such value below $\frac29$ is $\frac15$. Theorem~\ref{thm:five} gives $\mathcal S_2(5)\cap(0,\frac29)=\{\frac15\}$, and $\frac15=\ML(1,2,3,4)$.

(ii) Apply Lemma~\ref{lem:densityn} to $S=T_v$ as in the proof of Theorem~\ref{thm:general}(i). With $S_{j+1}$ the subtorus of dimension $j+1$ built from the first $j$ minima, $\ML(S_{j+1})\ge\frac14,\frac13,\frac12$ for $j=2,3,4$, so $\ML(v)<\frac15$ gives $\lambda_1+\lambda_2>\frac1{10}$, $\lambda_1+\lambda_2+\lambda_3>\frac4{15}$ and $\lambda_1+\dots+\lambda_4>\frac35$. Hence $\lambda_2\lambda_3\lambda_4>\frac1{20}\cdot\frac4{45}\cdot\frac3{20}=\frac1{1500}$ and $\lambda_1<1500/\eucl v$. The two-dimensional subtorus $W=S_2\supseteq T_v$ has $\ML(W)\le\ML(v)+\frac12\lambda_1<\frac15+\frac{750}{\eucl v}\le\frac29$, so $W$ is one of the products of Theorem~\ref{thm:five}, and $v\in\Lambda_W$ gives the two forms. The last statement follows because a critical subtorus has loneliness $\frac15<\frac29$.
\end{proof}

Part (ii) is the effective form of Cordella's reduction for five speeds \cite{Cordella}, with the critical subtori determined and the bound $n\kappa_n^2=3.2\cdot10^8$ of Theorem~\ref{thm:general}(iii) replaced by $33{,}750$. For four speeds the corresponding statement is Proposition~\ref{prop:large}, where the second critical subtorus $\Utwo$ is not a product. For five speeds no such subtorus exists below $\frac29$.

\section{Computations and reproducibility}\label{sec:comp}
All programs are written in C with exact 64-bit integer arithmetic (the Hermite normal form enumeration uses floating point square roots only to set loop bounds, with a margin of one on each side, and every candidate is then tested exactly), and all claims that depend on them are re-verified by a Python script using only the standard library and exact fractions. The programs are the following.

\emph{Exact loneliness of a two-dimensional subtorus.} For forms $\ell_i(X,Y)=s_iX+r_iY$, the maximum of $\min_i\norm{\ell_i}$ over $\T^2$ is attained at a point where three signed linear branches $\sigma_i(\ell_i-k_i)$ are equal, or where two proportional forms are at a common level and a third form is at a half-integer, or where two independent forms are at half-integers. Every such point is the solution of a $3\times3$ linear system with integer right-hand side in a bounded range. The program enumerates all of them, evaluates the minimum exactly at each, and returns the best point. It agrees with an independent implementation and with a fine grid search on several hundred random subtori with small coefficients. In the proof only the returned points are used, as certificates.

\emph{Enumeration of two-dimensional subtori.} Hermite normal forms for case~(i) of Lemma~\ref{lem:reduce}, and primitive $n\in\Z^3$ for case~(ii). Running times are ten and three seconds. The enumeration of Proposition~\ref{prop:third-enum}, with covolume up to $57$, the special-subtorus test and the threshold $1/3$, runs in $54$ seconds. The enumeration of Proposition~\ref{prop:enum25}, with squared covolume up to $11{,}331$, the tests for the $40$ special characters and the $176$ characters with $|m|_1=5$ up to sign, and the threshold $\frac25$, ran in two parallel jobs of $21$ minutes. When the grids $Q\in\{2,10,20,30\}$ give no point with $G\ge\frac25$, the program calls the exact maximization, in a second implementation of the same candidate enumeration in which the integer offsets of a base pair of independent forms run only over a fundamental domain of the lattice they generate. This happened for $44{,}839{,}680$ lattices, of which $18{,}960$ had $\ML<\frac25$. The two implementations agree on all $916{,}492$ coordinate-free subtori of covolume at most $20$, on $19{,}300$ further subtori with larger coefficients, and on all $18{,}960$ survivors. The formula in the proof of Lemma~\ref{lem:five} was compared with the C maximizer for all subtori $U_n$ of the five tori with $\supn n\le20$, $142{,}547$ subtori, with no difference, and every value in $[\frac13,\frac25)$ among them lies in $\mathcal R$.

\emph{Section~\ref{sec:obstr}.} The exact maximizer confirms Theorem~\ref{thm:obstr}(i) on $255$ random admissible subtori with $N\le126$. The loneliness of every repeated-speed circle was computed by Lemma~\ref{lem:corner} for $10{,}958$ admissible subtori with $N\le196$, $58$ of them with all three greatest common divisors above $1$, and for the examples $m\le4$, $K\le2$, in agreement with parts (ii) and (iii). The formula of Theorem~\ref{thm:fourspeed} was checked for all $4\mid N\le124$ and $\ell\le6$, and the comparison for $(N,\ell)=(60,61)$ and $(120,121)$. Proposition~\ref{prop:coal} was checked on $300$ random subtori with $\gcd(B,S)=1$ against $\ML(S,B,HS)$, and its independence of $A$ and $C$ on $1{,}440$ groups of parameters. Theorem~\ref{thm:phase} was checked on $5{,}460$ subtori with $N\le100$ and $H=\frac N2+jN$, $j\in\{0,1,2,5,9\}$, including the triple values of the second phase in all $3{,}651$ cases there with $g\mid\frac N2$.

\emph{Section~\ref{sec:five}.} The program \texttt{g5enum2} runs through the vectors $c=(0,c_2,\dots,c_5)$ with $\sum_{i<j}(q_ic_j-q_jc_i)^2\le765{,}275$ by nested loops whose ranges come from the partial sums of this positive definite quadratic form, keeps those with $\gcd(c)=1$ and first nonzero entry positive, and tests all directions $r_i\mp r_j$. For the eight vectors $q$ together it visits $65{,}989{,}492{,}206$ vectors, of which $30{,}485{,}136{,}454$ are normalized, in two parallel jobs of $47$ and $40$ minutes, and exactly $16$ pass. They are the products of Theorem~\ref{thm:five}, with squared covolume $30$ or $75$. A Python search over a box, without the pruning, reproduces the number of normalized vectors and the survivors for all eight $q$ with the bound $3{,}000$ in place of $765{,}275$. The program \texttt{torusmlN} is the exact maximizer \texttt{torusml2} extended to any number of forms. It agrees with \texttt{torusml2} on $289$ random inputs with four forms and with a separate Python implementation on $80$ inputs with five forms, and it confirms $\ML=\frac15$ for the $16$ survivors. It also computes $\ML(\Z q+\Z c)$ for all $8{,}960$ normalized $c$ with $|c_i|\le3$: the $14$ with $\ML<\frac29$ all pass the test and have $\ML=\frac15$. On $20{,}000$ random primitive planes in $\Z^5$ with coordinates in $[-3,3]$ it finds no value in $(\frac15,\frac29)$.

\emph{Exhaustive search.} All primitive quadruples with $\eucl v<512$, in two parallel runs of under one minute each.

\emph{Verification script.} It checks the formulas of Theorems~\ref{thm:U1} and \ref{thm:U2} against the exact corner computation for all coprime $x,y\le 40$ and $b\le 10$, $a\le 60$, the $21$ classes of characters and the certificates of Table~\ref{tab:chars}, the $123$ certificates of Section~\ref{sec:two} including the covolumes and the coordinate-freeness, the classification of all primitive quadruples with maximum speed at most $60$ by an independent pure Python enumeration, the absence of $m=3$ and the witnesses for all other $m$, the formulas of Lemmas~\ref{lem:W1sub} and \ref{lem:W2sub} against an exact two-dimensional maximizer written in Python ($524$ subtori of $\Wone$ with $|n_1|\le6$, $|n_2|,|n_3|\le3$, $458$ subtori of $\Wtwo$ with $|n_1|,|n_2|\le4$, $|n_3|\le3$, covering all three cases of Lemma~\ref{lem:W2sub}, and $F(n_1,n_2)$ for $|n_i|\le4$), the by-product on triples for maximum speed at most $80$, and the output file of the exhaustive search, including $200$ random exact re-computations. Larger ranges were checked with the C maximizer: $3{,}356$ subtori of $\Wone$ and $3{,}354$ of $\Wtwo$ with $|n_1|\le12$, $|n_2|,|n_3|\le6$, the $624$ subtori $F(n_1,n_2)$ with $|n_i|\le12$, and the by-product on all $1{,}098{,}601$ primitive triples with maximum speed at most $200$, of which $4{,}639$ have $\ML<1/3$. For Section~\ref{sec:upto25} the script checks the arithmetic of Lemma~\ref{lem:generic25}, the five types of characters with $|m|_1=5$ and their parametrizations, Lemma~\ref{lem:fivecells} at all points of the grid $(\frac1{60}\Z)^3$ for three values of $\varepsilon$, the formula in the proof of Lemma~\ref{lem:five} against its own exact maximizer for all $U_n$ with $\supn n\le3$ in the five tori, the formulas for $\ML(2,3,5j)$ and $\ML(1,4,5j+3)$ for $j\le40$, the two descriptions of $\mathcal R$, the example with value $\frac{43}{103}$ at the level $\frac37$, and the example of Remark~\ref{rem:slicing}. For Section~\ref{sec:obstr} it checks $\ML(1,\frac N2,N-1)$ for $4\mid N\le200$, five subtori $U(A,B,C)$ with $N\le16$ and all their repeated-speed circles, Theorem~\ref{thm:fourspeed} for $N\le60$, $\ell\le3$, the comparison in $U(2,55,3)$ with $\ell=61$, Proposition~\ref{prop:coal} on small cases, Theorem~\ref{thm:phase} on both sides of the transition for $N=12$, and the two values $\frac{15}{31}$ and $\frac{195}{401}$. For the $18{,}960$ survivors of Proposition~\ref{prop:enum25} it checks the witness points, the squared covolumes and the values, reduces them to the $141$ orbits under signed permutations, and recomputes the value of each orbit with its own exact maximizer.

\appendix
\section{Tables}
\begin{table}[H]
\centering\small
\begin{tabular}{l l l}
\toprule
$m$ & point $x\in W_m$ & $G(x)$\\\midrule
$(0,0,1,-1)$ & $(2/5,2/5,2/5,2/5)$ & $2/5$\\
$(1,-1,-1,-1)$ & $(2/5,2/5,2/5,3/5)$ & $2/5$\\
$(0,1,-2,-1)$ & $(2/5,2/5,2/5,3/5)$ & $2/5$\\
$(1,-2,-1,-1)$ & $(2/5,3/5,3/5,3/5)$ & $2/5$\\
$(0,1,-2,-2)$ & $(2/5,2/5,3/5,3/5)$ & $2/5$\\
$(0,0,1,-3)$ & $(2/5,2/5,1/2,1/2)$ & $2/5$\\
$(1,-2,-2,-1)$ & $(2/5,2/5,3/5,2/5)$ & $2/5$\\
$(0,1,-3,-1)$ & $(2/5,2/5,3/5,3/5)$ & $2/5$\\
$(1,-3,-1,-1)$ & $(2/5,2/5,3/5,3/5)$ & $2/5$\\
$(0,0,2,-3)$ & $(2/5,2/5,2/5,3/5)$ & $2/5$\\
$(1,-2,-2,-2)$ & $(2/5,2/5,2/5,2/5)$ & $2/5$\\
$(0,1,-3,-2)$ & $(2/5,2/5,2/5,3/5)$ & $2/5$\\
$(1,-3,-2,-1)$ & $(2/5,2/5,2/5,2/5)$ & $2/5$\\
$(0,0,1,-4)$ & $(2/5,2/5,2/5,3/5)$ & $2/5$\\
$(0,2,-3,-2)$ & $(2/5,2/5,3/5,1/2)$ & $2/5$\\
$(0,1,-4,-1)$ & $(2/5,2/5,1/2,2/5)$ & $2/5$\\
$(1,-3,-2,-2)$ & $(2/5,3/5,2/5,2/5)$ & $2/5$\\
$(0,1,-3,-3)$ & $(2/5,2/5,2/5,2/5)$ & $2/5$\\
$(1,-4,-1,-1)$ & $(2/5,2/5,2/5,2/5)$ & $2/5$\\
\midrule
$(0,0,1,-2)$ & $\Wone$ up to symmetry & $\ML=1/3$\\
$(0,1,-1,-1)$ & $\Wtwo$ up to symmetry & $\ML=1/3$\\
\bottomrule
\end{tabular}
\caption{The $21$ classes of primitive $m\in\Z^4$ with at least two nonzero entries and $\eucl m^2\le19$, up to signed permutations, with certificates for Lemma~\ref{lem:W25}. In each row $m\cdot x\in\Z$.}
\label{tab:chars}
\end{table}

\begin{table}[H]
\centering\small
\begin{tabular}{l r l l}
\toprule
$\ML(U)$ & $\covol^2$ & parametrization $(X,Y)\mapsto$ & maximizer\\\midrule
$1/4$ & $9$ & $(X,\,Y,\,-X-2Y,\,-X-Y)$ & $(3/4,3/4)$ \\
$1/4$ & $14$ & $(X,\,Y,\,-3Y,\,-2Y)$ & $(3/4,3/4)$ \\
$2/7$ & $21$ & $(X,\,Y,\,-3X-3Y,\,-X-Y)$ & $(5/7,5/7)$ \\
$2/7$ & $26$ & $(X,\,Y,\,-4Y,\,-3Y)$ & $(5/7,4/7)$ \\
$2/7$ & $27$ & $(X,\,Y,\,-2X-4Y,\,-X-Y)$ & $(5/7,5/7)$ \\
$2/7$ & $38$ & $(X,\,2Y,\,-5Y,\,-3Y)$ & $(5/7,6/7)$ \\
$2/7$ & $41$ & $(X,\,Y,\,-6Y,\,-2Y)$ & $(5/7,5/7)$ \\
$2/7$ & $62$ & $(X,\,Y,\,-6Y,\,-5Y)$ & $(5/7,5/7)$ \\
$3/10$ & $45$ & $(X,\,Y,\,-4X-5Y,\,-X-Y)$ & $(7/10,7/10)$ \\
$3/10$ & $57$ & $(X,\,Y,\,-3X-6Y,\,-X-Y)$ & $(7/10,7/10)$ \\
$3/10$ & $74$ & $(X,\,3Y,\,-7Y,\,-4Y)$ & $(7/10,9/10)$ \\
$3/10$ & $86$ & $(X,\,Y,\,-9Y,\,-2Y)$ & $(7/10,7/10)$ \\
$3/10$ & $146$ & $(X,\,Y,\,-9Y,\,-8Y)$ & $(7/10,7/10)$ \\
$4/13$ & $75$ & $(X,\,Y,\,-6X-6Y,\,-X-Y)$ & $(9/13,9/13)$ \\
$4/13$ & $81$ & $(X,\,Y,\,-5X-7Y,\,-X-Y)$ & $(9/13,9/13)$ \\
$4/13$ & $86$ & $(X,\,Y,\,-7Y,\,-6Y)$ & $(9/13,8/13)$ \\
$4/13$ & $98$ & $(X,\,3Y,\,-8Y,\,-5Y)$ & $(9/13,7/13)$ \\
$4/13$ & $99$ & $(X,\,Y,\,-4X-8Y,\,-X-Y)$ & $(9/13,9/13)$ \\
$4/13$ & $122$ & $(X,\,4Y,\,-9Y,\,-5Y)$ & $(9/13,12/13)$ \\
$4/13$ & $149$ & $(X,\,Y,\,-12Y,\,-2Y)$ & $(9/13,9/13)$ \\
$4/13$ & $158$ & $(X,\,3Y,\,-10Y,\,-7Y)$ & $(9/13,3/13)$ \\
$4/13$ & $206$ & $(X,\,2Y,\,-11Y,\,-9Y)$ & $(9/13,11/13)$ \\
$4/13$ & $266$ & $(X,\,Y,\,-12Y,\,-11Y)$ & $(9/13,9/13)$ \\
$5/16$ & $117$ & $(X,\,Y,\,-7X-8Y,\,-X-Y)$ & $(11/16,11/16)$ \\
$5/16$ & $129$ & $(X,\,Y,\,-6X-9Y,\,-X-Y)$ & $(11/16,11/16)$ \\
$5/16$ & $134$ & $(X,\,2Y,\,-9Y,\,-7Y)$ & $(11/16,13/16)$ \\
$5/16$ & $153$ & $(X,\,Y,\,-5X-10Y,\,-X-Y)$ & $(11/16,11/16)$ \\
$5/16$ & $182$ & $(X,\,5Y,\,-11Y,\,-6Y)$ & $(11/16,15/16)$ \\
$5/16$ & $230$ & $(X,\,Y,\,-15Y,\,-2Y)$ & $(11/16,11/16)$ \\
$5/16$ & $278$ & $(X,\,3Y,\,-13Y,\,-10Y)$ & $(11/16,9/16)$ \\
$5/16$ & $422$ & $(X,\,Y,\,-15Y,\,-14Y)$ & $(11/16,11/16)$ \\
\bottomrule
\end{tabular}
\caption{The $31$ coordinate-free two-dimensional subtori of $\T^4$ with $\ML\le5/16$, up to signed coordinate permutations, with a maximizer in the given coordinates. The first two are $\Utwo$ and $\Uone$ up to symmetry. The subtori of the form $(X,cY,\ldots)$ with the last three coordinates depending on $Y$ only are products of a three-runner circle with a full coordinate circle, and their values are the three-runner values $s/(3s+1)$.}
\label{tab:tori}
\end{table}

\end{document}